\renewcommand\epsilon\varepsilon
\renewcommand\phi\varphi
\renewcommand\geq\geqslant
\renewcommand\leq\leqslant
\renewcommand\ln\log
\newcommand\NN{\mathbb{N}}
\newcommand\ZZ{\mathbb{Z}}
\newcommand\coloneq{\mathbin{:=}}
\newcommand\ab\allowbreak
\numberwithin{equation}{subsection}
\theoremstyle{definition}
\newtheorem{Def}{Definition}[section]
\theoremstyle{plain}
\newtheorem{Pro}[Def]{Proposition}
\newtheorem{Lem}[Def]{Lemma}
\newtheorem{The}[Def]{Theorem}
\newtheorem*{The*}{Theorem}
\newtheorem{Cor}[Def]{Corollary}
\newtheorem*{Cor*}{Corollary}
\theoremstyle{remark}
\newtheorem{Exe}[Def]{Example}
\newtheorem{Rem}[Def]{Remark}
\title{Picard group of the forms of the affine line and of the additive group}
\author{Raphaël Achet \footnote{Université Grenoble Alpes,
Institut Fourier, CS 40700, 38058 Grenoble cedex 09. \newline
raphael.achet@univ-grenoble-alpes.fr}}
\date{}
\begin{document}

\maketitle

\abstract{We obtain an explicit upper bound on the torsion of the Picard group 
of the forms of $\mathbb{A}^1_k$ and their regular completions. We also obtain 
a sufficient condition for the Picard group 
of the forms of $\mathbb{A}^1_k$ to be non trivial and we give 
examples of non trivial forms of $\mathbb{A}^1_k$ with trivial Picard groups.}

\medskip
{\bf Keywords}: Picard group; Picard functor; Jacobian; Unipotent group;
imperfect field; Torsor.

{\bf MSC2010 Classification Codes}: 20G15; 20G07; 14R10; 14C22; 14K30.

\tableofcontents

\section*{Introduction and statement of the main results}

With the recent progress in the structure of linear algebraic groups over an 
imperfect field \cite{CGP} \cite{totaro}, it seems to be possible to study 
their Picard group if the Picard groups of unipotent algebraic groups are 
known well enough. As every unipotent smooth connected algebraic group 
is an iterated extension of forms of $\mathbb{G}_{a,k}$
\cite[XVII 4.1.1]{SGAIII_2}, this motivates the study of the Picard group of 
forms of $\mathbb{G}_{a,k}$. 

In this article, we consider more generally forms of the affine line, since 
our geometric approach applies to this setting without additional 
difficulties.

Let $k$ be a field, let $X$, $Y$ be schemes (resp. group schemes) over $k$. 
Recall that $X$ is a form of $Y$ if there is a field $K\supset k$ such that 
the scheme (resp. group scheme) $X_K$ is isomorphic to $Y_K$. We also recall 
that the affine line $\mathbb{A}^1_k$ is the $k$-scheme ${\rm Spec}(k[t])$; 
and the additive group $\mathbb{G}_{a,k}$ is the algebraic group of underlying 
scheme $\mathbb{A}^1_k={\rm Spec}(k[t])$, which represent the group functor:
\[
\begin{array}{rcl}
 Sch/k^\circ &\rightarrow &Groups \\
 T& \mapsto & \left(\mathcal{O}(T),+\right).
\end{array}
\]

If $k$ is a perfect field, all forms of $\mathbb{A}^1_k$ and
$\mathbb{G}_{a,k}$ are trivial. 
But non trivial forms of $\mathbb{A}^1_k$ and $\mathbb{G}_{a,k}$ exist over 
every imperfect field $k$; there structure has been studied by P. Russell 
\cite{Rus}, G. Greither \cite{haut1}, T.~Kambayashi, M.~Miyanishi, and 
M.~Takeuchi \cite{KMT} and \cite{FALF}. In \cite{haut1} and \cite{KMT} the 
Picard group of some special forms of $\mathbb{A}^1_k$ and $\mathbb{G}_{a,k}$ 
is described \cite[Lem.~6.12.2]{KMT} and \cite[Lem.~5.6]{haut1}. In 
\cite{FALF} T.~Kambayashi and M.~Miyanishi have continued the study of the 
forms of the affine line, they have proved numerous results on the forms of 
the affine line and on their Picard group \cite[Th. 4.2]{FALF},
\cite[Pro. 4.3.2]{FALF} and \cite[Cor. 4.6.1]{FALF}.

More recently B. Totaro has obtained an explicit description of
the class of extensions of a smooth connected unipotent group $U$ by the
multiplicative group as a subgroup of
${\rm Pic}(U)$ \cite[Lem.~9.2]{totaro}.
He has then applied this description to the structure of commutative
pseudo-reductive groups \cite[Lem.~9.4]{totaro} and \cite[Cor.~9.5]{totaro}. 
Moreover he has constructed an example of a non trivial form of
$\mathbb{G}_{a,k}^2$, such that ${\rm Pic}(U_{k_s})$ is trivial 
\cite[Exa.~9.7]{totaro}.

In this article, we go back over and improve some of the results of 
\cite{FALF} and \cite{haut1} with different methods.

Given a form $X$ of $\mathbb{A}^1_k$, it is known that there exists a finite 
purely inseparable extension $K$ of $k$ such that $X_K \cong \mathbb{A}^1_K$; 
then ${\rm Pic}(X)$ is $p^m$-torsion, where ${p^m\coloneq [K:k]}$ (see e.g. 
\cite[Lem.~2.4]{MB_lin_line_b}). Our main theorem yields a sharper result:
\begin{The*}[\ref{main_th}]
Let $X$ be a non trivial form of $\mathbb{A}^1_k$, and let $n(X)$ be the 
smallest non-negative integer such that
$X_{k^{p^{-n}}}\cong \mathbb{A}^1_{k^{p^{-n}}}$.
\begin{enumerate}[(i)]
\item ${\rm Pic}(X)$ is $p^{n(X)}$-torsion.

\item If $X$  has a $k$-rational point (e.g. $X$ is a form of
$\mathbb{G}_{a,k}$ or $k$ is separably closed), then ${\rm Pic}(X)\neq \{0\}$.
\end{enumerate}
\end{The*}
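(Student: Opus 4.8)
The plan is to obtain (i) by sandwiching multiplication by $p^{n(X)}$ on ${\rm Pic}(X)$ between Frobenius maps, and to obtain (ii) by working on the regular completion $\bar X$ of $X$ together with the divisor exact sequence. Throughout set $n=n(X)$ and $A=\mathcal O(X)$.

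\emph{Part (i).} Let $X^{(p^{n})}$ be the base change of $X$ along the $n$-fold Frobenius $\mathrm{Frob}^{n}_{k}$ of $k$, and let $F\colon X\to X^{(p^{n})}$ be the $n$-fold relative Frobenius. The $n$-fold absolute Frobenius factors as $\mathrm{Frob}^{n}_{X}=\mathrm{pr}\circ F$, where $\mathrm{pr}\colon X^{(p^{n})}\to X$ is the canonical projection; since $\mathrm{Frob}^{n}_{X}$ acts on ${\rm Pic}(X)$ as multiplication by $p^{n}$ (the Frobenius pullback of a line bundle is its $p^{n}$-th tensor power), this multiplication factors through $\mathrm{pr}^{*}\colon{\rm Pic}(X)\to{\rm Pic}(X^{(p^{n})})$. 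Hence it suffices to prove ${\rm Pic}(X^{(p^{n})})=\{0\}$, and for this I claim $X^{(p^{n})}\cong\mathbb A^1_k$. Indeed, $X$ is smooth (being a form of $\mathbb A^1_k$), so $F$ is finite and faithfully flat, and $\mathcal O(X^{(p^{n})})$ is identified with the $k$-subalgebra $k[A^{p^{n}}]\subseteq A$ generated by $p^{n}$-th powers. By definition of $n=n(X)$ we may view $A$ as a $k$-subalgebra of $k^{p^{-n}}[t]$ with $A\otimes_{k}k^{p^{-n}}=k^{p^{-n}}[t]$; then $A^{p^{n}}\subseteq(k^{p^{-n}})^{p^{n}}[t^{p^{n}}]=k[t^{p^{n}}]$, while conversely every $p^{n}$-th power of an element $\sum_j a_j c_j$ of $k^{p^{-n}}[t]=A\otimes_{k}k^{p^{-n}}$ equals $\sum_j c_j^{p^{n}}a_j^{p^{n}}$ and so lies in $k[A^{p^{n}}]$, because $c_j^{p^{n}}\in k$. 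Therefore $k[A^{p^{n}}]=k[t^{p^{n}}]\cong k[s]$, so $X^{(p^{n})}\cong\mathbb A^1_k$ and ${\rm Pic}(X)$ is $p^{n(X)}$-torsion.

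\emph{Part (ii).} The regular completion $\bar X$ of $X$ is a regular, geometrically integral projective curve whose complement $\bar X\smallsetminus X$ is a single closed point $P_\infty$ (over $\bar k$ the dense open $\mathbb A^1_{\bar k}$ is the complement of one point in the integral projective curve $\bar X_{\bar k}$). Excision on the regular curve $\bar X$ gives the exact sequence
\[
\ZZ\cdot[P_\infty]\longrightarrow{\rm Pic}(\bar X)\longrightarrow{\rm Pic}(X)\longrightarrow 0 .
\]
Suppose ${\rm Pic}(X)=\{0\}$, so ${\rm Pic}(\bar X)=\ZZ\cdot[P_\infty]$. A rational point $x_{0}\in X(k)$ is a degree-$1$ closed point of $\bar X$, so the degree homomorphism ${\rm Pic}(\bar X)\to\ZZ$ is surjective; as its source is $\ZZ\cdot[P_\infty]$ this forces $\deg P_\infty=1$, and then $[x_{0}]=[P_\infty]$ in ${\rm Pic}(\bar X)$. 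Hence there is $f\in k(\bar X)^{\times}$ with $\operatorname{div}(f)=P_\infty-x_{0}$, defining a morphism $f\colon\bar X\to\mathbb P^{1}_{k}$ (everywhere defined since $\bar X$ is a regular curve) whose scheme-theoretic fibre over $\infty$ is the reduced degree-$1$ point $P_\infty$; therefore $f$ is finite of degree $1$, i.e.\ finite and birational, and since $\mathbb P^{1}_{k}$ is normal $f$ is an isomorphism. Then $X\cong\mathbb P^{1}_{k}\smallsetminus\{P_\infty\}\cong\mathbb A^1_k$, contradicting the non-triviality of $X$. The parenthetical assertions are instances of $X(k)\neq\emptyset$: a form of $\mathbb G_{a,k}$ has its identity section as a rational point, and over a separably closed $k$ the smooth geometrically integral variety $X$ has a rational point.

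The only substantial step is the identification $X^{(p^{n(X)})}\cong\mathbb A^1_k$ used in (i) — it says that the relative Frobenius absorbs precisely the inseparability measured by $n(X)$ — whose heart is the equality $k[A^{p^{n}}]=k[t^{p^{n}}]$ above. Part (ii) is then elementary, the only delicate point being that $\bar X$ is merely regular and need not be $k$-smooth, so one must invoke that a finite birational morphism onto a normal scheme is an isomorphism instead of a statement about smooth curves.
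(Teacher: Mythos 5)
Your proof is correct and takes essentially the same route as the paper: in (i) you factor multiplication by $p^{n(X)}$ on ${\rm Pic}(X)$ through ${\rm Pic}\bigl(X^{(p^{n(X)})}\bigr)=0$ via the relative Frobenius (the paper phrases this as $f^{*}f_{*}=\deg(f)\cdot\mathrm{id}$ for $f=F^{n}_{X}$ rather than as the factorization of the absolute Frobenius, and takes $X^{(p^{n})}\cong\mathbb{A}^{1}_{k^{p^{-n}}}$ as essentially definitional where you verify it by the computation $k[A^{p^{n}}]=k[t^{p^{n}}]$), and in (ii) you run the paper's degree argument on the excision sequence for the regular completion, replacing its smoothness argument at a rational $P_{\infty}$ by an explicit degree-one map to $\mathbb{P}^{1}_{k}$. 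The only (harmless) slip is that for $\operatorname{div}(f)=P_{\infty}-x_{0}$ it is the fibre of $f$ over $0$, not over $\infty$, that equals $P_{\infty}$.
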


Assertion $(i)$ is stated by T. Kambayashi and M. Miyanishi in
\cite[Pro.~4.2.2]{FALF}, but their proof is only valid when $k$ is separably
closed. The arguments of our proof of assertion $(i)$ are quite general: we 
use them to obtain a bound on the torsion of the Picard groups of some higher 
dimensional $k$-varieties (Proposition~\ref{main_th_gen}).
T.~Kambayashi and M. Miyanishi have also shown that the exponent of the Picard 
group of a form of the affine line having a $k$-rational point is at least
$p^{n(X)}$ (see \cite[Pro. 4.2.3]{FALF}); this implies assertion $(ii)$.
We provide a short alternative proof of that assertion.

A form of $\mathbb{A}^1_k$ does not necessary have a $k$-rational point. In 
Subsection \ref{pic_triv} we present an explicit example of such a form with 
trivial Picard group (Lemma \ref{lem_pic_trivial}), and a more general 
construction (Proposition \ref{pro_triv}).
We will also show that the non trivial forms of $\mathbb{G}_{a,k}$ are 
not special algebraic groups.
This result has already been shown by D. T. Nguyễn \cite{note_ngu}, we are 
going to use a different method: we see it as a corollary of
our main Theorem \ref{main_th} and Proposition~\ref{pro_triv}.

Next, we consider the regular completion $C$ of the curve $X$.
The Picard groups of $C$ and $X$ are linked by a standard exact sequence 
\eqref{exacte.1}. We obtain the following result on the Picard functor
${\rm Pic}^0_{C/k}$:

\begin{The*}[\ref{th_pic_X}]
Let $X$ be a form of $\mathbb{A}^1_k$ and $C$ be the regular completion of
$X$. Let $n'(X)$ be the smallest non-negative integer $n$ such that the 
function field of $X_{k^{p^{-n}}}$ is isomorphic to $k^{p^{-n}}(t)$.
Let $k'$ be the unique minimal field extension of $k$ such that
$X_{k'} \cong \mathbb{A}^1_{k'}$.

Then ${\rm Pic}^0_{C/k}$ is a smooth connected unipotent algebraic group of 
$p^{n'(X)}$-torsion which is $k$-wound and splits over $k'$.

In addition if $X$ is a non trivial form of $\mathbb{G}_{a,k}$ and $p\neq 2$, 
then $k'$ is the minimal field extension of $k$ such that ${\rm Pic}^0_{C/k}$ 
splits over $k'$.
\end{The*}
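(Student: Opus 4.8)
The plan is to analyze the regular completion $C$ of $X$ and its Picard functor through base change to the purely inseparable closure. First I would establish the basic geometry: over $k'$ (equivalently over $k^{p^{-\infty}}$) the curve $X_{k'}\cong\mathbb{A}^1_{k'}$ has regular completion $\mathbb{P}^1_{k'}$, so $C_{k'}$ (or rather its normalization) is $\mathbb{P}^1_{k'}$; this forces $C$ to be a geometrically integral regular projective curve of arithmetic genus $p_a$ with $\overline{C}_{\bar k}$ rational, hence ${\rm Pic}^0_{C/k}$ is a smooth connected commutative algebraic group of dimension $p_a$ that becomes trivial after base change to $k'$ — in particular it is unipotent (no torus or abelian variety part survives, since over $\bar k$ it vanishes) and $k'$-split. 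That it is $k$-wound follows because a form of $\mathbb{A}^1_k$ has no nonconstant invertible functions and the completion introduces no $\mathbb{G}_a$-action extending to $C$; more carefully, one shows ${\rm Pic}^0_{C/k}$ contains no copy of $\mathbb{G}_{a,k}$ using that $X(k)$-torsors or the structure of $C\setminus X$ would otherwise produce a splitting already over $k$, contradicting non-triviality of the form. Here I would lean on the standard exact sequence \eqref{exacte.1} relating ${\rm Pic}(C)$, ${\rm Pic}(X)$, and the divisor classes supported on $C\setminus X$.

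Next I would pin down the torsion exponent. Since ${\rm Pic}^0_{C/k}$ is a smooth connected unipotent group over a field of characteristic $p$, it is killed by some power of $p$, and the minimal such power can be detected after any field extension that does not change whether the group is trivial in the relevant range — in particular after base change to $k^{p^{-n}}$. The key point is that ${\rm Pic}^0_{C_{k^{p^{-n}}}/k^{p^{-n}}}$ is trivial precisely when the function field of $X_{k^{p^{-n}}}$ is rational, i.e. when $n\geq n'(X)$, because rationality of the function field means the regular completion is $\mathbb{P}^1$ and has trivial $\mathrm{Pic}^0$, while if the function field is not rational the arithmetic genus is positive and $\mathrm{Pic}^0$ is a nontrivial unipotent group. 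Combining with the fact that multiplication by $p^n$ on ${\rm Pic}^0_{C/k}$ factors through base change along the $n$-fold relative Frobenius (the standard mechanism: the image of $p^n$ is controlled by $p^n$-torsion which, for these Russell-type forms, is detected over $k^{p^{-n}}$), I get that $p^{n'(X)}$ kills ${\rm Pic}^0_{C/k}$, and that no smaller power does because $\mathrm{Pic}^0_{C_{k^{p^{-(n'(X)-1)}}}}$ is still nontrivial. This is essentially the same argument as for assertion $(i)$ of Theorem~\ref{main_th} (or Proposition~\ref{main_th_gen}), now applied to $C$ rather than $X$, together with the genus criterion for rationality of the function field.

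For the final sentence — that when $X$ is a non trivial form of $\mathbb{G}_{a,k}$ and $p\neq 2$ the field $k'$ is the \emph{minimal} splitting field of ${\rm Pic}^0_{C/k}$ — I would use the explicit Russell classification. A nontrivial form of $\mathbb{G}_{a,k}$ with $p\neq 2$ is, up to isomorphism, a curve of the type $y^{p^m}=x+a_1 x^p+\cdots+a_r x^{p^r}$ inside $\mathbb{A}^2$, whose regular completion $C$ and whose ${\rm Pic}^0_{C/k}$ can be computed directly; the relevant feature is that the group law on ${\rm Pic}^0_{C/k}$ is visibly given by a Russell-type vector group whose coefficients are exactly the $a_i$, so that the minimal extension splitting ${\rm Pic}^0_{C/k}$ is the minimal extension over which all the $a_i$ become $p$-th powers the requisite number of times — and for forms of $\mathbb{G}_{a,k}$ this coincides with $k'$, the minimal extension trivializing $X$ itself, because the same coefficients govern both. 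The restriction $p\neq 2$ enters to rule out the sporadic behaviour in characteristic $2$ where extra forms and degeneracies of the Russell normal form occur (e.g. the distinction between $n(X)$, $n'(X)$ and the splitting field can genuinely differ).

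The main obstacle I expect is the last step: proving that the minimal splitting field of ${\rm Pic}^0_{C/k}$ is not strictly smaller than $k'$. Showing $k'$ splits ${\rm Pic}^0_{C/k}$ is the "soft" direction (it follows from $C_{k'}\cong\mathbb{P}^1_{k'}$); the converse requires a genuine computation with the Russell form and an argument that the vector-group invariants classifying ${\rm Pic}^0_{C/k}$ as a $k$-wound unipotent group cannot be trivialized over a proper subextension, which is where the $p\neq 2$ hypothesis and the precise normal form are essential. A secondary subtlety is justifying $k$-woundedness cleanly — one must make sure that no $\mathbb{G}_{a,k}$-subgroup of ${\rm Pic}^0_{C/k}$ sneaks in over $k$, which I would handle via the sequence \eqref{exacte.1} together with the fact that $X$, being a non trivial form, has $\mathcal{O}(X)^\times = k^\times$ and no $k$-rational structure forcing a split quotient.
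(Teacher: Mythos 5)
Your outline founders on a single recurring confusion: you treat the base change $C_K$ (for $K=k'$ or $K=k^{p^{-n}}$) as if it were the regular completion of $X_K$, i.e.\ as if it were $\mathbb{P}^1_K$. It is not: $C_K$ is in general non-normal at $P_\infty$, and only its \emph{normalization} is $\mathbb{P}^1_K$. Consequently ${\rm Pic}^0_{C/k}\times_k k'={\rm Pic}^0_{C_{k'}/k'}$ does \emph{not} become trivial over $k'$ (it has dimension $p_a(C)$, which is unchanged under field extension and is positive for most forms); it only becomes \emph{split}. This breaks your derivation of unipotence (``since over $\bar k$ it vanishes'' is false --- over $\bar k$ one gets the generalized Jacobian of a singular rational curve, a nontrivial vector group), and it breaks your torsion argument, whose ``key point'' is that ${\rm Pic}^0_{C_{k^{p^{-n}}}/k^{p^{-n}}}$ is trivial for $n\geq n'(X)$. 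The entire content of the paper's proof is precisely to control the gap between ${\rm Pic}^0_{C_K/K}$ and ${\rm Pic}^0$ of the normalization: $C_K$ is recovered from $\mathbb{P}^1_K$ by pinching along the conductor at infinity, and Theorem \ref{th_pic_final} identifies ${\rm Pic}^0_{C_K/K}$ with the unit-group quotient $\mu^{Z^K/Y^K}$. Splitness over $k'$ is then Lemma \ref{lem_dep}, and the $p^{n'(X)}$-torsion is a concrete computation showing that $p^{n'(X)}$-th powers of units of $\mathcal{O}_{\mathbb{P}^1_K,\infty}/\mathfrak{C}$ already lie in $\mathcal{O}_{C_K,P_\infty}/\mathfrak{C}$, via the relative Frobenius of the function field; your appeal to ``the standard mechanism\dots detected over $k^{p^{-n}}$'' is not an argument. (Woundness, by contrast, is genuinely soft: the paper cites \cite[Pro.~9.2.4]{NM} for normal proper geometrically integral curves; your proposed route through $\mathcal{O}(X)^*=k^*$ and the sequence \eqref{exacte.1} concerns abstract groups and would not by itself control the group scheme.)

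For the final assertion you propose to compute ${\rm Pic}^0_{C/k}$ explicitly as a Russell-form vector group ``whose coefficients are exactly the $a_i$,'' and you concede you have not done this; that computation is nowhere near as accessible as you suggest, and you correctly identify it as the main obstacle without overcoming it. The paper's argument is different and soft: for $k\subset K\varsubsetneq k'$, the normalization $C^K$ of $C_K$ is the regular completion of $X_K$, which by Lemma \ref{X_P1=>p=2} (this is exactly where $p\neq 2$ enters) is not $\mathbb{P}^1_K$; hence ${\rm Pic}^0_{C^K/K}$ is a nontrivial $K$-wound group, ${\rm Pic}^0_{C_K/K}$ surjects onto it by the same conductor-square exact sequence, and a split unipotent group admits no nontrivial homomorphism to a wound one \cite[B.3.4]{CGP}, so ${\rm Pic}^0_{C_K/K}$ is not split. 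You should replace your planned explicit computation by this descent of the problem to the normalization.
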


The full statement of Theorem \ref{th_pic_X} also contain an upper bound on 
the dimension of ${\rm Pic}^0_{C/k}$ for a class of forms of $\mathbb{A}^1_k$, but 
its 
formulation requires additional notations. This upper bound is obtain by 
computing the arithmetic genus of some curve in some weighted projective plane
(Corollary \ref{cor_deg_genre}).

The fact that ${\rm Pic}^0_{C/k}$ is smooth and $k$-wound is a direct 
consequence of results obtained in \cite[Chap.8 and Chap.9]{NM}. The fact that 
${\rm Pic}^0_{C/k}$ is unipotent is obtained in \cite[Th.~6.6.10]{KMT}.
We have the inequality $n(X) \geq n'(X)$, so Theorem~\ref{th_pic_X} yields a 
better bound on the torsion of ${\rm Pic}^0(C)$ than Theorem \ref{main_th} 
(see the exact sequence~\eqref{exacte.G.pic}). T. Kambayashi and M. Miyanishi 
obtained that the exponent of ${\rm Pic}^0_{C/k}$ is $p^{n'(X)}$ 
\cite[Cor.~4.6.1]{FALF}; this implies our result on the torsion of
${\rm Pic}^0_{C/k}$. We will provide an alternative proof of this result.

Before the proof of Theorem \ref{th_pic_X}, we will first gather in Section 
\ref{sec2} some results about the Picard functor, which are of independent 
interest. These results will be used in Section \ref{pic_fonct} to prove
Theorem \ref{th_pic_X}.
\medskip

{\bf Conventions}: Let $k$ be a field, unless explicitly stated, $k$ is of 
characteristic $p>0$. We choose an algebraic
closure $\overline{k}$, and denote by $k_s\subset \overline{k}$ the separable 
closure. For any non-negative integer $n$ we denote
$k^{p^{-n}}=\{x \in \overline{k} | x^{p^n} \in k\}$.

Let $X$ be a scheme; we note $\mathcal{O}_X$ the structural sheaf of 
$X$. We will denote $\mathcal{O}(X)$ the ring of regular functions on $X$, and
$\mathcal{O}(X)^*$ the multiplicative group of invertible regular functions 
on $X$. Let $x \in X$, the stalk of $\mathcal{O}_X$ at $x$ is denoted
$\mathcal{O}_{X,x}$, the residue field at $x$ is denoted $\kappa(x)$. 

The morphisms considered between two $k$-schemes are morphisms over $k$.
An {\it algebraic variety} is a scheme of finite type on ${\rm Spec}(k)$. Let 
$K$ be a field extension of $k$, the base change
$X \times_{{\rm Spec}(k)} {\rm Spec}(K)$ is denoted $X_K$. Let $X$ be an 
integral variety, the function field of $X$ is denoted $\kappa(X)$. A group 
scheme of finite type over $k$ will be called an {\it algebraic 
group}. A group scheme locally of finite type over $k$ will be called a
{\it locally algebraic group}.

A smooth connected unipotent algebraic group $U$ over $k$ is said to be
$k$-{\it split} if $U$ has a central composition series with successive 
quotients forms of $\mathbb{G}_{a,k}$. A smooth connected  unipotent algebraic 
group $U$ over $k$ is said to be $k$-{\it wound} if every $k$-morphism
$\mathbb{A}^1_k \rightarrow U$ is constant with image a point of $U(k)$ (an
equivalent definition is: $U$ does not have a central subgroup isomorphic to
$\mathbb{G}_{a,k}$ \cite[Pro. B.3.2]{CGP}).

\section[Forms of the affine line and of the additive group]{Forms of $\mathbb{A}^1_k$ and of $\mathbb{G}_{a,k}$}

\subsection{Regular completion and invariants}
\label{sec_intro}

In this first Subsection we introduce some notations and gather some results 
from P. Russell's article~\cite{Rus} that we will use in the rest of the 
article.

Let $X$ be a form of $\mathbb{A}^1_k$, we will note $C$ its regular 
completion, i.e., the unique projective regular curve such that there is an 
open dominant immersion ${j:X \rightarrow C}$ satisfying the following 
universal property: for every morphism $f:X \rightarrow Y$ to a proper scheme 
$Y$ there exists a unique morphism $\hat{f}: C \rightarrow Y$ such that
${\hat{f} \circ j = f}$ \cite[Th. 15.21]{AG.Gortz_Torsen}.

\begin{Lem}\label{rus1.1}\cite[1.1]{Rus}

Let $X$ be a form of $\mathbb{A}^1_k$, let $C$ be the regular completion of
$X$.
\begin{enumerate}[(i)]
\item $C\setminus X$ is a point denoted $P_\infty$ which is purely inseparable 
over $k$.

\item There is a unique minimal field extension $k'$ such that
$X_{k'} \cong \mathbb{A}^1_k$, and $k'$ is purely inseparable of finite 
degree over $k$.
\end{enumerate}
\end{Lem}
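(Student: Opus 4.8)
The plan is to base change to a finite purely inseparable extension $K/k$ trivialising $X$ — one exists by the result recalled in the introduction — which, after enlarging, we may take to be $K=k^{p^{-m}}$ for some $m\geq 0$. Observe at the outset that $X_{\overline{k}}\cong\mathbb{A}^1_{\overline{k}}$ is regular and $X$ is flat over $k$, so $X$ is $k$-smooth and geometrically integral, and hence $C$ is an integral projective regular curve in which $X$ is dense open.

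For assertion (i) I would argue as follows. Since $K/k$ is purely inseparable, the base-change morphisms $X_K\to X$ and $C_K\to C$ are finite, surjective and radicial, hence universal homeomorphisms, so they identify $C\setminus X$ with $C_K\setminus X_K$. As $X_K\cong\mathbb{A}^1_K$ is dense open in the proper $K$-scheme $C_K$, the universal property of the regular completion factors the inclusion $X_K\hookrightarrow C_K$ through $\mathbb{P}^1_K$ (the regular completion of $X_K$) via a morphism $\nu\colon\mathbb{P}^1_K\to C_K$; this $\nu$ is proper, birational and has finite fibres, hence is finite, so — $\mathbb{P}^1_K$ being normal — it is the normalisation of $C_K$ and is an isomorphism over the normal locus of $C_K$, which contains $X_K$. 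Therefore $C_K\setminus X_K=\nu(\{\infty_K\})$ is a single point $Q$, and $C\setminus X$ is the single point $P_\infty$, namely the image of $Q$ under $C_K\to C$. Finally $\nu$ and $C_K\to C$ induce residue field extensions $\kappa(P_\infty)\hookrightarrow\kappa_{C_K}(Q)\hookrightarrow\kappa_{\mathbb{P}^1_K}(\infty_K)=K$, so $\kappa(P_\infty)$ embeds into $k^{p^{-m}}$ and is purely inseparable over $k$, and it is finite over $k$ because it is the residue field at a closed point of the finite-type $k$-scheme $C$.

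For assertion (ii) I would first run the same argument over an arbitrary extension $L$ with $X_L\cong\mathbb{A}^1_L$: the inclusion $X_L\hookrightarrow C_L$ factors through $\mathbb{P}^1_L$ by a finite birational morphism $\phi$ which is an isomorphism over $X_L$, so $C_L\setminus X_L$ is a single point $Q_L$ lying over $P_\infty$ (it cannot map into $X$, whose preimage in $C_L$ is $X_L$), with $\kappa(P_\infty)\hookrightarrow\kappa(Q_L)\hookrightarrow\kappa(\infty_L)=L$ over $k$; thus every trivialising extension contains a copy of $\kappa(P_\infty)$. To finish, I would invoke (or reprove the relevant part of) P. Russell's explicit classification \cite{Rus}: $X$ is described, through its regular completion, as a curve in a weighted projective plane over $k$ cut out by an equation whose coefficients generate, after adjoining finitely many $p$-power roots, a finite purely inseparable extension $k'/k$; one then checks that over $k'$ the model normalises to $\mathbb{P}^1_{k'}$ with $P_\infty$ becoming $k'$-rational, so that $X_{k'}\cong\mathbb{A}^1_{k'}$, and that those $p$-power roots must lie in any trivialising field. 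This exhibits a smallest trivialising extension $k'$, finite and purely inseparable over $k$, and uniqueness follows because two smallest such extensions embed $k$-linearly into each other.

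The step I expect to be the main obstacle is this last one: extracting from $X$ a \emph{single} smallest trivialising field and showing it is finite and purely inseparable over $k$. Knowing only that $\kappa(P_\infty)$ embeds into every trivialising field does not suffice, since $\kappa(P_\infty)$ could a priori be strictly smaller than a trivialising field and distinct trivialising fields need not be comparable; nor is there a cheap reason for the set of trivialising subfields of $k^{p^{-m}}$ to be stable under intersection. What makes the statement true is that $X$ is $k$-smooth, so the failure of $C$ to be $k$-smooth — equivalently, the obstruction to $C$ becoming $\mathbb{P}^1$ after base change — is concentrated at the one point $P_\infty$ and is controlled by a single finite purely inseparable invariant; pinning this down is precisely the heart of Russell's normal-form computation, and a self-contained argument would analyse the completed local ring $\widehat{\mathcal{O}}_{C,P_\infty}$ and the way base change adjoins $p$-power roots to it.
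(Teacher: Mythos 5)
The paper offers no proof of this lemma: it is quoted verbatim from Russell's article \cite[1.1]{Rus}, so your attempt can only be measured against Russell's original argument and against internal consistency. Your treatment of assertion (i) is correct and complete: base-changing along the universal homeomorphism $C_K\to C$ for a trivialising $K=k^{p^{-m}}$, producing $\nu\colon\mathbb{P}^1_K\to C_K$ from the universal property of the regular completion, identifying $\nu$ with the normalisation of $C_K$, and reading off that $C\setminus X$ is a single point whose residue field embeds into $k^{p^{-m}}$ is exactly the standard route.

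Assertion (ii) is where the genuine gap sits, and you have located it correctly yourself: nothing you prove rules out two incomparable trivialising subfields of $\overline{k}$, and intersections of purely inseparable extensions do not behave well enough for a formal descent to their intersection. But the remedy you propose does not work. Russell's explicit normal form $y^{p^n}=x+a_1x^p+\dots+a_mx^{p^m}$ (Theorem \ref{rus.2.1}) classifies forms of $\mathbb{G}_{a,k}$, not arbitrary forms of $\mathbb{A}^1_k$: a general form need not have a $k$-point, Proposition \ref{homo} only covers the case where $X_{k_s}$ carries a group structure, and the general algebraic presentation is Asanuma's much later theorem (and only for $p>2$). Moreover, in \cite{Rus} the classification is downstream of Lemma 1.1, so invoking it here is circular. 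The argument that actually closes the gap is a smallest-field-of-definition argument: writing $\mathcal{O}(X)\otimes_k\overline{k}=\overline{k}[t]$, the two-dimensional subspace $\overline{k}+\overline{k}\,t$ is intrinsic (it is ${\rm H}^0$ of $\mathcal{O}(\infty)$ on the normalised completion, with $\infty$ the point at infinity produced in part (i)); one checks that $X_K\cong\mathbb{A}^1_K$ if and only if this subspace is defined over $K$; and a finite-dimensional subspace of $V\otimes_k\overline{k}$ admits a unique smallest field of definition, contained in every field of definition (EGA IV, 4.8.11). Combined with the existence of one trivialising field of the form $k^{p^{-m}}$ --- which your part (i) already uses --- this yields the required $k'$, finite and purely inseparable over $k$.
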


Let $\phi_k$ be the Frobenius morphism of $k$, i.e. the morphism 
\[\phi_k:x\in k \mapsto x^{p} \in k.\]
In the following we will denote $\phi$ for $\phi_k$.

Let $X$ be a form of $\mathbb{A}^1_k$, by definition $X={\rm Spec}(R)$ with $R$ a 
$k$-algebra such that $R\otimes_k k' \cong k'[t]$. Let $n$ be a non-negative integer,
we consider
\[\begin{array}{rrcl}
F_R^{n}:&R \otimes_k k &\rightarrow  &R \\
& r\otimes x & \mapsto & xr^{p^n} 
\end{array}\]
with $k$ seen as a $k$-algebra via $\phi^n$, the $n$th power of $\phi$.

The morphism $F_R^{n}$ corresponds at the scheme level to the $n$th relative 
Frobenus morphism $F_X^n$. Let  $X^{(p^n)}$ be the base change
$X \times_{{\rm Spec}(k)} {\rm Spec}(k)$ with $k$ seen as a $k$-algebra via
$\phi^n$, in other world $X^{(p^n)}$ is isomorphic to the base change of $X$ 
by $k^{p^{-n}}$. We can then write
\[F^n_X: X \rightarrow X^{(p^n)}.\]

\begin{Lem}\cite[1.3]{Rus}

There is an integer $n\geq 0$ such that
$X^{(p^n)} \cong \mathbb{A}^1_{k^{p^{-n}}}$.
\end{Lem}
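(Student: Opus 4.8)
The plan is to read off the statement from Lemma~\ref{rus1.1}(ii) together with the identification, recorded just before the statement, of $X^{(p^n)}$ with the purely inseparable base change $X_{k^{p^{-n}}}$.

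First I would apply Lemma~\ref{rus1.1}(ii): there is a finite purely inseparable extension $k'/k$ with $X_{k'}\cong \mathbb{A}^1_{k'}$. Set $p^m\coloneq[k':k]$. Since $k'/k$ is purely inseparable of degree $p^m$, one has $x^{p^m}\in k$ for every $x\in k'$, hence $k'\subseteq k^{p^{-n}}$ for every $n\geq m$; fix such an $n$. Base changing the isomorphism $X_{k'}\cong\mathbb{A}^1_{k'}$ along the inclusion $k'\hookrightarrow k^{p^{-n}}$ then gives
\[
X_{k^{p^{-n}}}\;=\;X_{k'}\otimes_{k'}k^{p^{-n}}\;\cong\;\mathbb{A}^1_{k'}\otimes_{k'}k^{p^{-n}}\;=\;\mathbb{A}^1_{k^{p^{-n}}}.
\]

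Finally, I would use that $X^{(p^n)}=X\times_{{\rm Spec}(k),\,\phi^n}{\rm Spec}(k)$ is isomorphic to $X_{k^{p^{-n}}}$ via the ring isomorphism $k^{p^{-n}}\xrightarrow{\ \sim\ }k$, $x\mapsto x^{p^n}$ (this is exactly the identification stated just above the lemma, and it is compatible with the respective structure morphisms since that isomorphism restricts to $\phi^n$ on $k$). Composing with the isomorphism of the previous step yields $X^{(p^n)}\cong\mathbb{A}^1_{k^{p^{-n}}}$, as claimed. The argument is short because it rests entirely on Lemma~\ref{rus1.1}; the only point deserving attention is the compatibility between the base-change operation $X\mapsto X_{k^{p^{-n}}}$ and the relative Frobenius twist $X\mapsto X^{(p^n)}$, and this has already been spelled out in the text immediately preceding the statement.
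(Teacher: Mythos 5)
Your argument is correct. Note that the paper itself gives no proof of this lemma: it is simply cited from Russell's article \cite[1.3]{Rus}, so there is no in-text argument to compare against. Your derivation — invoking Lemma~\ref{rus1.1}(ii) to get a finite purely inseparable splitting field $k'$ of degree $p^m$, observing that every element of such an extension satisfies $x^{p^m}\in k$ (since $[k(x):k]=p^e$ divides $p^m$, forcing $e\leq m$) so that $k'\subseteq k^{p^{-n}}$ for $n\geq m$, and then transporting the resulting isomorphism $X_{k^{p^{-n}}}\cong\mathbb{A}^1_{k^{p^{-n}}}$ through the identification of $X^{(p^n)}$ with $X_{k^{p^{-n}}}$ recorded just before the statement — is a clean, self-contained way to recover the result from material already available in the paper, and it involves no circularity since Lemma~\ref{rus1.1} is an independent citation. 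The only cosmetic point is that Lemma~\ref{rus1.1}(ii) as printed reads $X_{k'}\cong\mathbb{A}^1_k$, an evident typo for $\mathbb{A}^1_{k'}$, which you have correctly read in the intended way.
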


\begin{Def}\label{def_n'}
Let $X$ be a form of $\mathbb{A}^1_k$.
\begin{enumerate}[(i)]
\item The smallest non-negative integer $n$ such that
$X^{(p^n)} \cong \mathbb{A}^1_{k^{p^{-n}}}$ is denoted $n(X)$.

\item The smallest non-negative integer $n$ such that
$\kappa\left(X^{(p^n)}\right)\cong k^{p^{-n}}(t)$  is denoted $n'(X)$.

\item The point $P_\infty$ is purely inseparable (Lemma \ref{rus1.1}), let $r(X)$ be the 
integer such that ${\rm deg}(P_\infty)=p^{r(X)}$.
\end{enumerate}
\end{Def}

\begin{Rem}
\begin{enumerate}[(i)]
\item We have $n(X) \geq n'(X)$, we will show in Example \ref{C_P1} that this
inequality can be strict (but equality holds if $X$ is a form of
$\mathbb{G}_{a,k}$ and if $p\neq 2$ according to Lemma \ref{X_P1=>p=2}).

\item Let $n$ be $n(X)$, the morphism $F^n_X$ extend to a finite dominant morphism
$\mathcal{F}^n_X:C \rightarrow \mathbb{P}^1_{k^{p^{-n}}}$ of degree $p^n$ 
\cite[Lem 1.3]{Rus}.
Then $p^{r(X)}$ is the residue class degree of the valuation associated to $P_\infty$ in
$\kappa(C)$, so
\[
p^{r(X)}=[\kappa(P_\infty):k]\leq \left[\kappa(C):
\kappa\left(\mathbb{P}^1_{k^{p^{-n}}}\right)\right]=p^{n}.
\]
Hence $r(X)\leq n(X)$.
\end{enumerate}
\end{Rem}

\begin{Def} \label{def_m}
Let $m(X)$ be the positive integer such that the image of the group morphism
${\rm deg}: {\rm Pic}(C) \rightarrow \ZZ$ is $m(X)\ZZ$.
\end{Def}

\begin{Rem}
$m(X)$ is the greatest common divisor of the degrees of the residue fields of the closed 
points of $C$, in particular $m(X)$ divides $[\kappa(P_\infty):k]=p^{r(X)}$. So $m(X)$ is 
a power of $p$ and $m(X) \leq p^{r(X)}$. 
\end{Rem}

We have shown the following relations between the above invariants:
\begin{Lem} \label{lem_ineq}
\begin{align*}
n(X)&\geq \max\left(n'(X),r(X)\right)\\ m(X)& \ | \ p^{r(X)}.
\end{align*}
\end{Lem}

\subsection[Structure of the forms of the additive group]{Structures of the forms of $\mathbb{G}_{a,k}$}

In this Subsection we will gather some results mainly from P. Russell's 
article \cite{Rus} on the structure of the forms of $\mathbb{G}_{a,k}$ and on 
the reasons why a form of $\mathbb{A}^1_k$ can fail to have a group structure.

Let $A={\rm End}_k(\mathbb{G}_{a,k})$ (endomorphisms of $k$-group scheme) and
$F=F^{1}_ {\mathbb{G}_{a,k}} \in A$ 
the relative Frobenius endomorphism. Then $A=k\langle F \rangle$ is a non commutative 
ring of polynomials with the relations $Fa=a^pF$ for all $a\in k$. Following \cite{Rus}, 
we denote by $A^*$ the subset of polynomials in $A$ with non zero constant coefficients.

\begin{The}\label{rus.2.1} \cite[2.1]{Rus}

Let $G$ be a form of $\mathbb{G}_{a,k}$.
Then $G$ is isomorphic to the subgroup
${\rm Spec} \left( k[x,y]/I \right)$ of $\mathbb{G}_{a,k}^2$, where $I$ is the ideal
of $k[x,y]$ generated by the separable polynomial 
${y^{p^n}-\left(x+a_1x^p+\dots+a_mx^{p^m}\right)}$ for some $a_1, \dots, a_m \in k$.
Equivalently, $G$ is the kernel of the homomorphism
\begin{equation}\label{eq_Ga}
\begin{array}{rcl}
\mathbb{G}_{a,k}^2 &\rightarrow & \mathbb{G}_{a,k} \\
(x,y) & \mapsto & y^{p^n}-\left(x+a_1x^p+\dots+a_mx^{p^m} \right).
\end{array}
\end{equation}
Thus, we can see $G$ as a fibre product
\[\xymatrix{
G \ar[r] \ar[d]  & \mathbb{G}_{a,k} \ar[d]^{\tau} \\
\mathbb{G}_{a,k} \ar[r]_{F^n} & \mathbb{G}_{a,k}
}\]
where $\tau=1+a_1F+\dots+a_mF^m \in A^*$. Similarly, any $G$ defined by such a product
is a form of $\mathbb{G}_{a,k}$. We note $G=\left(F^n,\tau \right)$.
\end{The}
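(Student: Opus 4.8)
\emph{Strategy and first reduction.} The plan is to realize $G$ as a plane curve in $\mathbb{G}_{a,k}^2$ cut out by an additive polynomial, and then read off the asserted normal form; since the middle step is due to Russell, I only sketch its skeleton. Since $G$ is a form of $\mathbb{G}_{a,k}$ it is a smooth connected commutative affine $k$-group of dimension one, and multiplication by $p$ on $G$ is the zero morphism (this may be checked after the faithfully flat base change to $\overline{k}$, where $G_{\overline{k}}\cong\mathbb{G}_{a,\overline{k}}$, since $G$ is separated). A commutative affine unipotent $k$-group killed by $p$ admits a closed immersion into a vector group $\mathbb{G}_{a,k}^N$ (structure theory of unipotent group schemes, see e.g.\ \cite{CGP}), and the $N$ coordinate projections restrict to group homomorphisms $G\to\mathbb{G}_{a,k}$; hence $\mathcal{O}(G)$ is generated as a $k$-algebra by finitely many elements of $M\coloneq\mathrm{Hom}_k(G,\mathbb{G}_{a,k})$, on which $A=k\langle F\rangle$ acts on the left with $Fu=u^p$.

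\emph{Two additive generators, and the normal form.} This is the technical core. Let $C$ be the regular completion of $G$ and $P_\infty$ its point at infinity (Lemma~\ref{rus1.1}); after base change to $\overline{k}$ and a choice of coordinate on $G_{\overline{k}}\cong\mathbb{G}_{a,\overline{k}}$, every element of $M$ becomes an additive polynomial in one variable, so its pole order along $P_\infty$ is a power of $p$, and no nonzero element of $M$ defines a birational morphism $G\to\mathbb{G}_{a,k}$ when $G$ is nontrivial, for such a morphism would be an open immersion onto the open subgroup forming its image, hence an isomorphism (so $G$ would be trivial). A pole-order analysis at $P_\infty$, using this together with the facts that $G$ is a curve and that the picture over $\overline{k}$ is that of $\mathbb{G}_{a,\overline{k}}$, extracts from the generators of $\mathcal{O}(G)$ two homomorphisms $x,y\in M$ still generating $\mathcal{O}(G)$, with $\kappa(G)$ purely inseparable over $k(x)$ and separable over $k(y)$; after normalising the remaining freedom in $x$ and $y$, the relation between them takes the form $y^{p^n}=x+a_1x^p+\dots+a_mx^{p^m}$ with $a_i\in k$. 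Indeed $y^{p^n}$ lies in $k[x]$, being a regular function on $G$ that lies in $k(x)$ and is integral over $k[x]$, and it must be an additive polynomial in $x$, being a homomorphism factoring through the surjective homomorphism $x$; the assertion that its coefficient of $x$ is nonzero is exactly that $\tau\coloneq1+a_1F+\dots+a_mF^m$ lies in $A^{*}$, equivalently that $f\coloneq y^{p^n}-(x+a_1x^p+\dots+a_mx^{p^m})$ is separable in $x$ (as $\partial f/\partial x=1$).

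\emph{Converse and fibre-product description.} Conversely, given any $\tau=1+a_1F+\dots+a_mF^m\in A^{*}$, the polynomial $f\coloneq y^{p^n}-(x+a_1x^p+\dots+a_mx^{p^m})$ is additive in $(x,y)$, so it defines a homomorphism $\mathbb{G}_{a,k}^2\to\mathbb{G}_{a,k}$; this homomorphism is surjective --- its image is a nonzero smooth connected subgroup of $\mathbb{G}_{a,k}$ --- so its kernel $G$ is a one-dimensional subgroup scheme of $\mathbb{G}_{a,k}^2$, hence unipotent, smooth by the Jacobian criterion (since $\partial f/\partial x=1$), and connected because the projection $G\to\mathbb{A}^1_k$ onto the $x$-coordinate is a finite morphism which is bijective on points, hence a homeomorphism. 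Since a one-dimensional smooth connected unipotent group over the perfect field $\overline{k}$ is isomorphic to $\mathbb{G}_{a,\overline{k}}$, $G$ is a form of $\mathbb{G}_{a,k}$. Finally, writing $f=F^n(y)-\tau(x)$ exhibits $G$ as the fibre product of $F^n:\mathbb{G}_{a,k}\to\mathbb{G}_{a,k}$ and $\tau:\mathbb{G}_{a,k}\to\mathbb{G}_{a,k}$, that is $G=(F^n,\tau)$.

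\emph{Main obstacle.} The first and last steps are soft; the real content lies in the middle one --- the descent to exactly two additive generators over $k$ and the extraction of the additive normal form. Over $\overline{k}$ the group is literally $\mathbb{G}_{a,\overline{k}}$ and there is nothing to prove, but over $k$ the module $M$ is only an $A$-submodule of the $\overline{k}$-form $\overline{k}\langle F\rangle$, the pole orders occurring in $\mathcal{O}(G)$ along $P_\infty$ need not exhaust $\mathbb{N}$, and controlling them well enough to single out two additive functions whose relation has the prescribed shape with $\tau\in A^{*}$ is where the work of the theorem is concentrated.
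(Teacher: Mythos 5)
First, note that the paper offers no proof of Theorem \ref{rus.2.1} at all: it is quoted from \cite[Th.~2.1]{Rus}, so there is no in-paper argument to measure you against. Judged on its own terms, your write-up gets the architecture of Russell's proof right, and the two soft ends are complete and correct: the reduction ($G$ is commutative, unipotent and killed by $p$, hence embeds in a vector group, hence $\mathcal{O}(G)$ is generated by finitely many additive functions), and the converse (the additive polynomial $y^{p^n}-\tau(x)$ with $\tau\in A^*$ cuts out a smooth, connected, one-dimensional subgroup of $\mathbb{G}_{a,k}^2$ which becomes $\mathbb{G}_{a,\overline{k}}$ over $\overline{k}$).

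However, the middle step, which carries essentially all the content of the theorem, is described rather than proved, as you yourself concede in your last paragraph. Concretely, three things are missing. (1) The existence of a \emph{separable} additive function $y\in M$, i.e.\ one not factoring through the relative Frobenius of $G$; this follows from the observation that otherwise $\mathcal{O}(G)$ would be generated by $p$-th powers, contradicting $\mathcal{O}(G)_{\overline{k}}=\overline{k}[t]$, but you never give the argument. (2) The proof that the two functions $x$ (obtained by composing $F^{n}_G$ with an isomorphism $G^{(p^n)}\cong\mathbb{G}_{a,k}$) and $y$ together generate $\mathcal{O}(G)$ as a $k$-algebra: this is exactly where Russell's analysis of $M$ as a finitely generated left module over the left-Euclidean twisted polynomial ring $A=k\langle F\rangle$ (or, in your language, the pole-order bookkeeping at $P_\infty$) must actually be carried out, and you only assert its outcome. (3) The normalization forcing the coefficient of $x$ in $y^{p^n}=\tau(x)$ to be nonzero is not automatic: if the coefficient vanishes, i.e.\ $\tau=\tau' F$, one cannot in general extract a $p$-th root of $\tau'$ inside $A$ unless its coefficients are $p$-th powers in $k$, so the reduction to $\tau\in A^*$ interacts with the minimality of $n$ and the choice of $y$ and needs a genuine argument, not just ``normalising the remaining freedom.'' Finally, even granting generators $x,y$ satisfying the displayed relation, you must still check that this single relation generates the whole ideal $I$, i.e.\ that the surjection $k[x,y]/\left(y^{p^n}-\tau(x)\right)\twoheadrightarrow\mathcal{O}(G)$ is injective; this is quick (both sides are one-dimensional integral domains, the source being the coordinate ring of the smooth connected group produced in your converse step), but it is absent from your text.
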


\begin{Rem}\label{n(G)}
Let $G$ be a form of $\mathbb{G}_{a,k}$, the proof of \cite[Th.~2.1]{Rus} 
shows that in the equation \eqref{eq_Ga} we can choose $n$ to be $n(G)$.
\end{Rem}

Recall that any smooth connected unipotent algebraic group splits after base 
change by a finite purely inseparable extension \cite[Cor.~IV § 2 3.9]{DG}. In 
the particular case of the forms of $\mathbb{G}_{a,k}$, we have the following 
more precise result:

\begin{Cor}\cite[2.3.1]{Rus}

Let $G$ be the form of $\mathbb{G}_{a,k}$ defined by the equation 
${y^{p^n}=x+a_1x^p+\dots+a_mx^{p^m}}$. Then
${k'\coloneq k\left(a_1^{p^{-n}},\dots,a_m^{p^{-n}}\right)}$  if the smallest 
extension of $k$ such that $G_{k'} \cong \mathbb{G}_{a,k'}$.
\end{Cor}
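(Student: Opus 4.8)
The plan is to prove two things: that $G$ becomes isomorphic to $\mathbb{G}_{a}$ over $k'=k\bigl(a_1^{p^{-n}},\dots,a_m^{p^{-n}}\bigr)$, and that $k'$ embeds over $k$ into every field $L$ for which $G_L\cong\mathbb{G}_{a,L}$; the first point is a short explicit computation, and the actual work lies in the second (minimality) statement.

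\emph{Triviality over $k'$.} Put $b_i\coloneq a_i^{p^{-n}}\in k'$ and let $\sigma\coloneq 1+b_1F+\dots+b_mF^m$, so that $\sigma(z)=z+b_1z^p+\dots+b_mz^{p^m}$. Since $b_i^{p^n}=a_i$, one has in $k'[z]$
\[
\sigma(z)^{p^n}=z^{p^n}+b_1^{p^n}z^{p^{n+1}}+\dots+b_m^{p^n}z^{p^{n+m}}
=z^{p^n}+a_1z^{p^{n+1}}+\dots+a_mz^{p^{n+m}}=\tau\bigl(z^{p^n}\bigr).
\]
By Theorem~\ref{rus.2.1}, $G$ is cut out in $\mathbb{G}_{a}^2$ by $y^{p^n}=\tau(x)$, so $z\mapsto\bigl(z^{p^n},\sigma(z)\bigr)$ defines a homomorphism $\psi\colon\mathbb{G}_{a,k'}\to G_{k'}$ (both components are additive and the group law on $G$ is componentwise). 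Its kernel is ${\rm Spec}\bigl(k'[z]/(z^{p^n},\sigma(z))\bigr)$, which is trivial because $\sigma(z)=z\cdot\bigl(1+b_1z^{p-1}+\dots+b_mz^{p^m-1}\bigr)$ with the bracket a unit modulo $z^{p^n}$. As $\psi$ is a homomorphism of smooth connected one-dimensional algebraic groups with trivial kernel, it is an isomorphism; thus $G_{k'}\cong\mathbb{G}_{a,k'}$.

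\emph{Minimality.} Let $\theta\colon\mathbb{G}_{a,L}\xrightarrow{\sim}G_L$ be an isomorphism; I will show $a_j^{p^{-n}}\in L$ for all $j$. Let $\pi\colon G\to\mathbb{G}_{a}$ be the first projection $(x,y)\mapsto x$; since $\tau(0)=0$, its kernel is $\{(0,y):y^{p^n}=0\}=\alpha_{p^n}$. Hence $u\coloneq\pi\circ\theta$ is a group homomorphism $\mathbb{G}_{a,L}\to\mathbb{G}_{a,L}$, i.e.\ an additive polynomial $u\in L[z]$, whose scheme-theoretic kernel is isomorphic to $\alpha_{p^n,L}$; this kernel being infinitesimal, the image of $z$ in $L[z]/(u)$ is nilpotent, so $u$ divides a power of $z$ and therefore $u=c\,z^{p^n}$ with $c\in L^*$. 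Writing $v\coloneq(y\text{-coordinate})\circ\theta\in L[z]$, the relation defining $G$ becomes, in $L[z]$,
\[
v^{p^n}=\tau(u)=c\,z^{p^n}+a_1c^p z^{p^{n+1}}+\dots+a_mc^{p^m}z^{p^{n+m}}.
\]
The left-hand side is a $p^n$-th power in $L[z]$, so each coefficient on the right lies in $L^{p^n}$; reading off the coefficients of $z^{p^n}$ and $z^{p^{n+j}}$ gives $c\in L^{p^n}$ and $a_jc^{p^j}\in L^{p^n}$, hence $a_j\in L^{p^n}$, i.e.\ $a_j^{p^{-n}}\in L$. Therefore $k'\subseteq L$, and together with the first part this proves that $k'$ is the smallest field over which $G$ splits.

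I expect the crux to be the minimality part, and within it the identification $\ker\pi=\alpha_{p^n}$ together with its consequence $u=c\,z^{p^n}$: once the first coordinate of the splitting is forced to be a multiple of $z^{p^n}$, extracting $a_j\in L^{p^n}$ is just the elementary remark that a $p^n$-th power in a polynomial ring over $L$ has all of its coefficients in $L^{p^n}$. Alternatively one could deduce existence and minimality at once from Lemma~\ref{rus1.1}(ii), which already provides a unique minimal (finite, purely inseparable) splitting field; the argument above has the advantage of identifying that field explicitly.
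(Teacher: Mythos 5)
Your proof is correct, but note that the paper itself offers no proof of this corollary: it is quoted directly from Russell \cite[2.3.1]{Rus}, where it is obtained as a consequence of his classification of the forms $(F^n,\tau)$ up to isomorphism via the twisted polynomial ring $A=k\langle F\rangle$. Your argument is a self-contained, explicit alternative. The splitting $z\mapsto (z^{p^n},\sigma(z))$ with $\sigma=1+a_1^{p^{-n}}F+\dots+a_m^{p^{-n}}F^m$ is the standard one, and your minimality argument is the real content: identifying $\ker\pi\cong\alpha_{p^n}$, observing that an additive polynomial with infinitesimal kernel must be a monomial $cz^{p^n}$, and then extracting $c\in L^{p^n}$ and $a_jc^{p^j}\in L^{p^n}$ from the elementary fact that $v^{p^n}$ has all coefficients in $L^{p^n}$. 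Two points deserve to be made explicit. First, the step ``homomorphism of smooth connected one-dimensional groups with trivial scheme-theoretic kernel is an isomorphism'' uses that trivial kernel forces a closed immersion for affine algebraic groups (see \cite[II \S 5 5.1]{DG}, which the paper invokes elsewhere for exactly this purpose), after which surjectivity follows from connectedness and equality of dimensions. Second, you implicitly take $\theta$ to be an isomorphism of algebraic groups rather than of schemes; this is the intended reading of the statement, and in any case costs nothing, since a form of $\mathbb{G}_{a,L}$ that is nontrivial as a group is $L$-wound and hence cannot even have underlying scheme $\mathbb{A}^1_L$. With these remarks your proof is complete and buys a concrete computation in place of Russell's structural machinery.
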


Let $X$ be a form of $\mathbb{A}^1_k$, P. Russell showed in his article 
\cite{Rus} that there are two reasons for $X$ to fail to have a group 
structure. Firstly $X$ may not have a $k$-rational point. Secondly $X_{k_s}$ 
may have only finitely many automorphisms.

\begin{Pro}\cite[6.9.1]{KMT}

Let $X$ be a form of $\mathbb{A}^1_k$, such that $X$ has a $k$-rational point 
$P_0$. Let $C$ be the regular completion of $X$. Then the following are 
equivalent:
\begin{enumerate}[(i)]
\item $X$ has a group structure with neutral point $P_0$.

\item $X$ is isomorphic as a scheme to a form of $\mathbb{G}_{a,k}$.

\item ${\rm Aut}(C_{k_s})$ is infinite.
\end{enumerate}
\end{Pro}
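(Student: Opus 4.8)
The plan is to prove the cycle of implications $(i)\Rightarrow(ii)\Rightarrow(iii)\Rightarrow(i)$. The implication $(i)\Rightarrow(ii)$ is essentially formal: if $X$ carries a $k$-group structure with neutral element $P_0$, then $X$ is in particular a form of a one-dimensional smooth connected affine group over $k$ with a marked rational point; since $X_{k'}\cong\mathbb{A}^1_{k'}$ as a scheme for some finite purely inseparable $k'/k$ (Lemma \ref{rus1.1}), the group $X_{k'}$ is a one-dimensional smooth connected unipotent group over $k'$, hence isomorphic to $\mathbb{G}_{a,k'}$; so $X$ is a form of $\mathbb{G}_{a,k}$, and in particular isomorphic to $\mathbb{G}_{a,k}$ as a scheme after a purely inseparable base change, i.e. a form of the additive group in the sense used in the paper.

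For $(ii)\Rightarrow(iii)$, I would invoke Russell's description (Theorem \ref{rus.2.1}): if $X$ is isomorphic as a scheme to a form $G=(F^n,\tau)$ of $\mathbb{G}_{a,k}$, then $C$ is the regular completion of $G$, and over $k_s$ the group $G_{k_s}$ becomes $\mathbb{G}_{a,k_s}$ (the defining coefficients $a_i$ acquire $p^n$-th roots in $\overline{k}$, but one must check these lie in $k_s$ — in fact $k'/k$ is purely inseparable, so $G_{k_s}$ need not be split; however $G$ still has a $k_s$-group structure, namely the additive one transported through any scheme isomorphism $X\cong G$). The automorphism group $\mathrm{Aut}(C_{k_s})$ contains the automorphisms of $C_{k_s}$ induced by translations of the group $X_{k_s}$ fixing $P_\infty$ (translations extend to the regular completion by the universal property of $C$, since $C_{k_s}$ is the regular completion of $X_{k_s}$), and these already form an infinite group isomorphic to $X(k_s)$, which is infinite because $k_s$ is infinite and $X_{k_s}$ is a curve with a rational point.

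The implication $(iii)\Rightarrow(i)$ is the one I expect to be the genuine obstacle, and where Russell's structure theory is really used. The idea: $C_{k_s}$ is a regular projective curve with a single point $P_\infty$ at infinity, which is rational over $k_s$ (after base change to $k_s$ the purely inseparable point becomes rational, or at worst we argue over $\overline{k}$ and descend), and $C_{\overline{k}}\cong\mathbb{P}^1_{\overline{k}}$. A curve $C$ of this shape with infinite automorphism group fixing the cusp $P_\infty$ must have its automorphism group acting transitively on $X_{k_s}=C_{k_s}\setminus\{P_\infty\}$; one shows that the subgroup of $\mathrm{Aut}(C_{k_s})$ fixing $P_\infty$ and acting on the open curve $X_{k_s}$, being infinite and lying inside the automorphisms of $\mathbb{P}^1_{\overline{k}}$ fixing a point, is a one-dimensional connected unipotent subgroup, hence acts simply transitively on $X_{k_s}$; transporting the group law via the orbit map through $P_0\in X(k)$ and checking this descends to $k$ (using that $P_0$ is $k$-rational and that the automorphism group and its action are defined over $k_s$ with the relevant Galois-equivariance, then over $k$ by Russell's analysis of which automorphisms descend) gives the group structure with neutral point $P_0$. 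The delicate points are: (a) producing the one-dimensional unipotent subgroup of $\mathrm{Aut}(C_{k_s})$ from mere infiniteness — this uses that $\mathrm{Aut}$ of such a curve is itself an algebraic group of finite type, so infinite implies positive-dimensional, and the unipotence comes from the fixed point $P_\infty$ together with wildness of the cusp in characteristic $p$; and (b) the descent of the resulting $k_s$-group structure to $k$, which is exactly where having the $k$-rational point $P_0$ as a rigidifying neutral element is indispensable. I would cite or reprove Russell \cite[§4]{Rus} for step (a) and use a Galois-descent argument for step (b).
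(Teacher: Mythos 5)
The paper does not prove this Proposition; it is quoted verbatim from \cite[6.9.1]{KMT}, so there is no internal proof to compare yours against and I assess it on its own terms. Your $(i)\Rightarrow(ii)$ and $(ii)\Rightarrow(iii)$ are essentially sound, with one repair needed in $(i)\Rightarrow(ii)$: a one-dimensional smooth connected unipotent group over the imperfect field $k'$ is \emph{not} automatically $\mathbb{G}_{a,k'}$ (that is the whole subject of the paper); you must either use that its underlying scheme is $\mathbb{A}^1_{k'}$ (so it is not $k'$-wound), or, more simply, base change to $\overline{k}$, where a one-dimensional smooth connected affine group with underlying scheme $\mathbb{A}^1_{\overline{k}}$ is $\mathbb{G}_{a,\overline{k}}$ and not a torus; this already makes $X$ a form of $\mathbb{G}_{a,k}$.

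The genuine gap is in $(iii)\Rightarrow(i)$, exactly where you pass from ``$\mathrm{Aut}(C_{k_s})$ is infinite'' to ``the subgroup of $\mathrm{Aut}(C_{k_s})$ fixing $P_\infty$ is infinite''. This is automatic when $C$ is not smooth, since then the non-smooth locus is exactly $\{P_\infty\}$ and is preserved by every automorphism; but it fails when $C\cong\mathbb{P}^1_k$, where $\mathrm{Aut}(C_{k_s})=\mathrm{PGL}_2(k_s)$ is always infinite while the stabilizer of a purely inseparable point of degree $>2$ is trivial (an element of $\mathrm{PGL}_2(k_s)$ fixing a closed point of degree $\geq 3$ over $k_s$ is the identity). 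The paper's own example following Lemma \ref{X_P1=>p=2} --- $X=\mathbb{P}^1_k\setminus\{Q\}$ with $Q$ purely inseparable of degree $>2$ --- has $k$-rational points and satisfies $(iii)$ literally, yet $X_{k_s}$ admits no group structure; so condition $(iii)$ must be read as ``$\mathrm{Aut}(X_{k_s})$ is infinite'' (equivalently, the stabilizer of $P_\infty$ is infinite), which is what your sketch silently uses but the stated hypothesis does not give. Beyond that, your two flagged ``delicate points'' are not dispatched by the sketch: (a) producing a one-dimensional unipotent group acting simply transitively on $X_{k_s}$ from an abstractly infinite automorphism group requires showing that these automorphisms are the $k_s$-points of a positive-dimensional algebraic group --- this is the substance of Russell's Theorem 4.2 and cannot be waved through; and (b) Galois descent of the group law from $k_s$ to $k$ is not formal, because a group law on $X_{k_s}$ with prescribed neutral point $P_0$ is not unique, so invariance of $P_0$ does not give equivariance of the multiplication. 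The efficient way to close (b) with the tools already in this paper is Proposition \ref{homo}: once $X_{k_s}$ carries a group structure, $X$ is a principal homogeneous space under a form $G$ of $\mathbb{G}_{a,k}$ over $k$, and the $k$-rational point $P_0$ trivializes this torsor, so $X\cong G$ and the transported law has neutral element $P_0$.
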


\begin{Pro} \label{homo} \cite[4.1]{Rus}

Let $X$ be a form of $\mathbb{A}^1_k$ and suppose that $X_{k_s}$ admits a 
group structure. Then $X$ is a principal homogeneous space for a form $G$ of
$\mathbb{G}_{a,k}$ determined uniquely by $X$. Moreover
$X={\rm Spec}(k[x,y]/I)$, $G={\rm Spec}(k[x,y]/J)$ where the ideals $I$ and
$J$ are generated respectively by $y^{p^n}-b-f(x)$ and
$y^{p^n}-f(x)$ with $b\in k$ and $f(x)\coloneq x+a_1x^p+\dots+a_mx^{p^m}$. 
Conversely, if $X$ and $G$ are defined as above, then $X$ is a principal 
homogeneous space for $G$.
\end{Pro}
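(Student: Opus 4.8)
The plan is to produce the group $G$ from $X$ by Galois descent from $k_s$, and then to read off the explicit equations from Russell's normal form. First I would work over $k_s$: by hypothesis $X_{k_s}$ carries a group structure, and by Theorem~\ref{rus.2.1} any such structure makes $X_{k_s}$ a form $G_s$ of $\mathbb{G}_{a,k_s}$, so that the translation action exhibits $X_{k_s}$ as a torsor under $G_s$ acting on itself. The key point I would then isolate is that the group $G_s$ together with its action on $X_{k_s}$ is \emph{canonically} attached to the scheme $X_{k_s}$, independently of the chosen group law. For this I would characterize the translations intrinsically: every automorphism of $X_{k_s}$ extends uniquely to an automorphism of its regular completion $C_{k_s}$ fixing the point $P_\infty$ (Lemma~\ref{rus1.1}), and among automorphisms of $(C_{k_s},P_\infty)$ the translations coming from a structure of form of $\mathbb{G}_{a,k_s}$ form the unique maximal smooth connected subgroup of the automorphism functor acting freely on $X_{k_s}$ — after a finite purely inseparable base change, where $C$ becomes $\mathbb{P}^1$ and $X$ the affine line, these are exactly the maps $t\mapsto t+c$, a canonical copy of $\mathbb{G}_a$, and one checks that this description descends. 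Granting this, $G_s$ with its action on $X_{k_s}$ carries a canonical $\mathrm{Gal}(k_s/k)$-descent datum; since everything is affine the descent is effective and yields a form $G$ of $\mathbb{G}_{a,k}$ acting on $X$ with the action simply transitive over $k_s$, i.e.\ $X$ is a $G$-torsor. Uniqueness of $G$ follows from the same canonicity: if $X$ is also a torsor under $G'$, then $G'_s$ is again the translation group of $X_{k_s}$, so $G'\cong G$ compatibly with the actions.

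Next I would make the torsor explicit. By Theorem~\ref{rus.2.1} and Remark~\ref{n(G)} write $G=\ker\psi$ where $\psi\colon\mathbb{G}_{a,k}^2\to\mathbb{G}_{a,k}$, $(x,y)\mapsto y^{p^n}-f(x)$ with $f(x)=x+a_1x^p+\dots+a_mx^{p^m}$; since $f'(x)=1$ the group $G$ is smooth and $\psi$ is faithfully flat, giving a short exact sequence $1\to G\to\mathbb{G}_{a,k}^2\xrightarrow{\psi}\mathbb{G}_{a,k}\to 1$. The associated cohomology sequence reads
\[
\mathbb{G}_{a,k}^2(k)\xrightarrow{\ \psi\ }\mathbb{G}_{a,k}(k)\xrightarrow{\ \delta\ }H^1(k,G)\longrightarrow H^1(k,\mathbb{G}_{a,k}^2)=0 ,
\]
the last term vanishing because the additive group has no non-trivial torsors over a field. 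Hence $\delta$ is surjective, and $\delta(b)$ is represented by the fibre $\psi^{-1}(b)$; writing the class of the $G$-torsor $X$ as $\delta(b)$ with $b\in k$ gives $X\cong\psi^{-1}(b)=\mathrm{Spec}\bigl(k[x,y]/(y^{p^n}-b-f(x))\bigr)$, which together with $G=\mathrm{Spec}\bigl(k[x,y]/(y^{p^n}-f(x))\bigr)$ is precisely the asserted normal form.

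For the converse one checks that $t\mapsto t^{p^n}$ and $f$ are additive in characteristic $p$, so that $\psi$ is a homomorphism of group schemes; then $G=\ker\psi=\psi^{-1}(0)$ acts by translation on $X=\psi^{-1}(b)$ and, $\psi$ being faithfully flat, this exhibits $X$ as a $G$-torsor. The genuinely delicate step is the first one: showing that the translation subgroup is intrinsic to $X_{k_s}$, which requires dealing with the representability of the automorphism functor (or its relevant sub-functor) and the effectivity of the descent datum; once that is available, the rest is formal manipulation of the exact sequence and of $p$-polynomials.
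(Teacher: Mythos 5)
The paper offers no internal proof of this Proposition --- it is quoted from \cite[Th.~4.1]{Rus} --- so your argument can only be measured against Russell's original one, which it in fact reconstructs quite faithfully: descend the translation group of $X_{k_s}$ to obtain $G$, then read off the equations from the cohomology sequence of $0\to G\to\mathbb{G}_{a,k}^2\xrightarrow{\psi}\mathbb{G}_{a,k}\to 0$ together with additive Hilbert~90. The cohomological half and the converse are complete and correct as written (note only that surjectivity of $\psi$ on $k_s$-points uses separability of $f$, which you implicitly have from $f'=1$).

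The step you yourself flag --- that the translation subgroup is intrinsic to the \emph{scheme} $X_{k_s}$ --- is where all the content sits, and while your sketch is essentially right it should be tightened in two places. First, representability of the automorphism functor is a red herring: it suffices to compare faithful actions of smooth connected groups on $X_{k_s}$. For any such free action, after the finite purely inseparable extension $K/k_s$ trivializing $X$, the group acts on $\mathbb{A}^1_K$ by affine maps $t\mapsto at+b$ without fixed points, forcing $a=1$, i.e.\ the action lands in the canonical $\mathbb{G}_{a,K}$ of translations (which is independent of the chosen coordinate, since conjugating translations by $t\mapsto at+b$ gives translations); containment of one action in another can then be checked after the faithfully flat extension $K/k_s$, which yields both the maximality of $G_s$ and the uniqueness of $G$. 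Second, Galois-stability of $G_s$ together with its action follows from that uniqueness, and effectivity of the descent is automatic because $G_s$ is affine and everything is defined over a finite subextension of $k_s$. Russell, and \cite[6.7--6.9]{KMT}, reach the same descent datum more explicitly by computing ${\rm Aut}(C_{k_s},P_\infty)$ outright as $G_s(k_s)$ extended by a finite group (or by the affine group when $C_{k_s}\cong\mathbb{P}^1_{k_s}$, the case $p=2$, $n=1$ of Lemma~\ref{X_P1=>p=2}), which exhibits the translations as a characteristic subgroup; that computation is what replaces your maximality argument. So: right route, one genuinely incomplete step, and the missing details are exactly the ones you identified.
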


\begin{Rem}
P. Russell in \cite{Rus} and T.~Kambayashi, M.~Miyanishi, and M.~Takeuchi in 
\cite{KMT} have classified all forms of $\mathbb{A}^1_k$ over a separably 
closed field such that the regular completion has arithmetic genus $\leq 1$.

M. Rosenlicht \cite[6.9.3]{KMT} has found an example of a form of
$\mathbb{A}^1_k$ with only finitely many automorphisms, of genus $(p-1)/2$ for 
all $p>2$.

More recently, T. Asanuma \cite[Th. 8.1]{PI_forms_aff_curves} has found an
explicit algebraic presentation of the forms of $\mathbb{A}^1_k$, for every 
field $k$ of characteristic $p>2$.
\end{Rem}

\subsection{Examples}

Let $X$ be a form of $\mathbb{A}^1_k$, first we will compare the minimal field 
$k'$ such that $X_{k'}\cong \mathbb{A}^1_{k'}$ and the residue field
$\kappa(P_\infty)$ of $P_\infty$. There is an inclusion
${\kappa(P_\infty)\subset k'}$, which may be strict, as shown by the example 
below.

\begin{Exe}
Let $k=\mathbb{F}_p(t_1,t_2)$ and $G$ be the form of $\mathbb{G}_{a,k}$ 
defined by the equation
\[y^{p^2}=x+t_1x^p+t_2x^{p^2},\]
then $C$ is defined as a curve of $\mathbb{P}^2_k$ by the equation
\[y^{p^2}=xz^{p^2-1}+t_1x^pz^{p^2-p}+t_2x^{p^2}.\]
In this case
$\kappa(P_\infty)=k(t_2^{p^{-2}}) \varsubsetneq k'=k(t_1^{p^{-2}},t_2^{p^{-2}})$.
\end{Exe}

The inequality $p^{r(X)}=[\kappa(P_\infty):k] \leq p^{n(X)}$ (Lemma \ref{lem_ineq}) may
also be strict, as shown by:

\begin{Exe}\label{exe_batard}
Let $k=\mathbb{F}_p(t)$ and $G$ be the form of $\mathbb{G}_{a,k}$ 
defined by the equation 
\[y^{p^3}=x+tx^p+t^{p^2}x^{p^2},\]
then $n(G)=3$ and after the change of variable $w=tx-y^p$ we remark that $G$ is also 
defined by the equation
\[-t^{1-p}y^{p^2}-t^{-1}y^p=t^{-1}w+t^{1-p}w^p+w^{p^2}.\]
So $C$ is defined in $\mathbb{P}^2_k$ by
\[-t^{1-p}y^{p^2}-t^{-1}y^pz^{p^2-p}=t^{-1}wz^{p^2-1}+t^{1-p}w^pz^{p^2-p}+w^{p^2},\] 
the residue field of the point at infinity is $\kappa(P_\infty)=k(t^{p^{-2}})$.
\end{Exe}

We will now present some results on the forms of $\mathbb{A}^1_k$ with regular 
completion equal to $\mathbb{P}^1_k$.

\begin{Lem} \label{lem_X_P1}
Let $X$ be a form of $\mathbb{A}^1_k$ such that $X(k)\neq \emptyset$. The following are 
equivalent:
\begin{enumerate}[(i)]
\item $ C \cong \mathbb{P}_k^1$.

\item $X$ is the complement of a purely inseparable point of $\mathbb{P}_k^1$.

\item $C$ is  smooth.
\end{enumerate}
\end{Lem}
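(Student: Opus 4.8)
The plan is to prove the cyclic chain of implications $(i)\Rightarrow(iii)\Rightarrow(ii)\Rightarrow(i)$, using the fact that $X$ has a $k$-rational point $P_0$ together with the universal property of the regular completion $j:X\to C$ and the structure results of Subsection \ref{sec_intro}.

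First, $(i)\Rightarrow(iii)$ is immediate: $\mathbb{P}^1_k$ is smooth over $k$.

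Next, for $(iii)\Rightarrow(ii)$, I would argue as follows. By Lemma \ref{rus1.1}(i), $C\setminus X=\{P_\infty\}$ is a single point, purely inseparable over $k$; and since $X(k)\ne\emptyset$, the point $P_0\in X(k)\subset C(k)$ is a $k$-rational point of $C$. Assuming $C$ is smooth, $C$ is a smooth projective geometrically connected curve over $k$ possessing a $k$-rational point, so its genus is an invariant preserved under base change; after base change to $k'$ (the minimal field of Lemma \ref{rus1.1}(ii)) we have $X_{k'}\cong\mathbb{A}^1_{k'}$, hence $C_{k'}$ is the regular completion of $\mathbb{A}^1_{k'}$, which is $\mathbb{P}^1_{k'}$, of genus $0$. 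Therefore $C$ has genus $0$; a smooth genus-$0$ curve with a rational point is isomorphic to $\mathbb{P}^1_k$, and under such an isomorphism $X$ becomes the complement of the single point $P_\infty$, which is purely inseparable. This gives $(ii)$ (and in fact recovers $(i)$ along the way).

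Finally, for $(ii)\Rightarrow(i)$: if $X$ is the complement of a purely inseparable point $P$ of $\mathbb{P}^1_k$, then $\mathbb{P}^1_k$ is a regular (indeed smooth) projective curve containing $X$ as a dense open subscheme with complement a single closed point; by the uniqueness in the universal property of the regular completion (\cite[Th. 15.21]{AG.Gortz_Torsen}), $C\cong\mathbb{P}^1_k$. The main point to be careful about is the genus-constancy argument in $(iii)\Rightarrow(ii)$: one must invoke the $k$-rational point to ensure geometric connectedness and that $H^0(C,\mathcal{O}_C)=k$, so that the arithmetic genus computed over $k$ agrees with the one computed over $k'$ (or, alternatively, compute directly that $\mathrm{Pic}^0_{C/k}=0$ using that it is unipotent and vanishes after the faithfully flat base change to $k'$). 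This is the only step requiring genuine care; the rest is bookkeeping with the universal property.
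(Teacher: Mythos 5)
Your proposal is correct and follows essentially the same route as the paper: the easy directions come from the uniqueness of the regular completion and Russell's description of $C\setminus X$, and the key implication (smoothness implies $C\cong\mathbb{P}^1_k$) is obtained by base-changing to $k'$ to see that $C_{k'}\cong\mathbb{P}^1_{k'}$ and then using the $k$-rational point; the paper simply quotes \cite[Pro.~7.4.1 (b)]{QL} where you unwind the genus-zero-plus-rational-point argument explicitly.
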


\begin{proof}
We begin with $(i) \Leftrightarrow (ii)$, the implication $(i) \Rightarrow (ii)$ is
a consequence of \cite[Lem.~1.1]{Rus}. The converse is clear.

Now we show $(i) \Leftrightarrow (iii)$, the implication $(i) \Rightarrow (iii)$ is 
clear. Suppose $C$ is smooth, let $k'$ the smallest field such that
$X_{k'} \cong \mathbb{A}^1_{k'}$. Then $C_{k'}$ is smooth; so
$C_{k'}\cong \mathbb{P}^1_{k'}$ and $C(k) \neq \emptyset$. According to
\cite[Pro.~7.4.1 (b)]{QL} it follows that $C \cong \mathbb{P}_k^1$.
\end{proof}

\begin{Rem}\label{rem_rat}
If $X$ is a non trivial form of $\mathbb{A}^1_k$, then $P_\infty$ is not $k$-rational. 
Indeed if $P_\infty$ is $k$-rational then $C$ is smooth at $P_\infty$
\cite[Pro.~4.3.30]{QL} so it is smooth everywhere. According to Lemma \ref{lem_X_P1} $C$ 
is isomorphic to $\mathbb{P}^1_k$ and $X$ is the complement of a $k$-rational point of
$\mathbb{P}^1_k$, thus $X\cong \mathbb{A}^1_k$.
\end{Rem}

\begin{Lem}\label{X_P1=>p=2} \cite{Rosen_aut} \cite{Rus} \cite[6.9.2]{KMT}

Let $G$ be a form of $\mathbb{G}_{a,k}$.
If $C \cong \mathbb{P}^1_k$ then either $G \cong \mathbb{G}_{a,k}$ or $p=2$ and $n(G)=1$.
\end{Lem}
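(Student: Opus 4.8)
The goal is to show that if $G$ is a form of $\mathbb{G}_{a,k}$ whose regular completion $C$ is isomorphic to $\mathbb{P}^1_k$, then either $G$ is trivial or $p = 2$ and $n(G) = 1$. The plan is to use the concrete presentation of Theorem~\ref{rus.2.1} together with Remark~\ref{n(G)}: write $G = (F^n, \tau)$ with $n = n(G)$, so $G$ is defined inside $\mathbb{G}_{a,k}^2$ by the separable equation $y^{p^n} = f(x)$, where $f(x) = x + a_1 x^p + \dots + a_m x^{p^m}$. The case $n(G) = 0$ gives $G \cong \mathbb{G}_{a,k}$, so I may assume $n = n(G) \geq 1$ and aim to deduce $p = 2$ and $n = 1$. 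The regular completion $C$ sits in $\mathbb{P}^2_k$ as the projective closure of the affine curve $y^{p^n} = f(x)$; homogenizing with $z$ gives $y^{p^n} = x z^{p^n - 1} + a_1 x^p z^{p^n - p} + \dots + a_m x^{p^m} z^{p^n - p^m}$ (one needs $m \leq n$, which holds since $n = n(G)$). The unique point at infinity is $P_\infty = [0:1:0]$, and $C \cong \mathbb{P}^1_k$ forces $C$ to be smooth everywhere — in particular smooth at $P_\infty$.

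First I would compute the local behaviour of $C$ at $P_\infty$. Dehomogenizing at $y = 1$, the curve near $P_\infty$ is cut out by $1 = x z^{p^n - 1} + a_1 x^p z^{p^n - p} + \dots + a_m x^{p^m} z^{p^n - p^m}$ in the $(x,z)$-plane, with $P_\infty$ the origin. The point $P_\infty$ is purely inseparable over $k$ of degree $p^{r(G)}$ with $r(G) \leq n$ (Remark after Definition~\ref{def_n'}); since $C \cong \mathbb{P}^1_k$ we also know $C$ is smooth, so by Remark~\ref{rem_rat} the point $P_\infty$ is \emph{not} $k$-rational, hence $r(G) \geq 1$. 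The residue field $\kappa(P_\infty)$ is generated over $k$ by the relevant $p^{-n}$-th roots of the $a_i$'s appearing with top-degree exponent in $x$. I would analyze the Newton polygon / lowest-order terms of the defining equation at $P_\infty$: smoothness at $P_\infty$ means the local ring is a DVR, which pins down the multiplicity of the branch and forces strong constraints linking the exponents $p^n$, $p$, $p^2, \dots, p^m$ appearing in $f$. Concretely, after reordering, the dominant term near the origin in $(x,z)$ is the one with the largest power of $x$ that survives, i.e.\ the $a_m x^{p^m} z^{p^n - p^m}$ term (assuming $a_m \neq 0$), and smoothness of $\{a_m x^{p^m} z^{p^n - p^m} + \text{(lower in some grading)} = 1\}$ at the origin is extremely restrictive.

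The main obstacle — and the technical heart — will be turning "smooth at $P_\infty$" into the numerical conclusion $p = 2$, $n = 1$. The cleanest route is probably via the arithmetic genus: the plane curve of degree $p^n$ given by $y^{p^n} = f(x)$ has arithmetic genus $\binom{p^n - 1}{2}$ if it were smooth of that degree, but its actual (geometric) genus is $0$ since $C \cong \mathbb{P}^1_k$. So all of that genus must be absorbed by the singularity at $P_\infty$ of the projective closure — \emph{unless} the curve is already smooth as a plane curve, which (for a plane curve of degree $d$) happens only for $d \leq 2$; for $d = p^n \geq 3$ the plane closure is necessarily singular at $P_\infty$, contradicting $C \cong \mathbb{P}^1_k$ (since $C$, being the regular completion, would then differ from the plane closure, but $C \cong \mathbb{P}^1_k$ has arithmetic genus $0$ and the normalization map would have to be an isomorphism away from $P_\infty$ while $P_\infty$ is a single point — one must check $C$ is the plane model or handle the normalization carefully). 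Thus $p^n \in \{1, 2\}$, forcing $p = 2$ and $n = 1$ (the case $p^n = 1$ being $G \cong \mathbb{G}_{a,k}$). I would also need to confirm that when $p = 2$, $n = 1$ such forms with $C \cong \mathbb{P}^1_k$ genuinely occur (e.g.\ $y^2 = x + a x^2$ with $a \notin k^2$), so the statement is sharp, and to double-check the degenerate subcase $m = 0$ (where $f(x) = x$, giving $G \cong \mathbb{G}_{a,k}$ after all). The delicate points are (a) making sure the plane model's only singularity is at $P_\infty$ and its resolution contributes all the genus, and (b) the separability hypothesis on $f$ (so $f'(0) = 1 \neq 0$), which guarantees the affine curve is smooth and the sole obstruction to $C \cong \mathbb{P}^1_k$ is at infinity.
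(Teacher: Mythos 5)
The paper offers no proof of this lemma---it is quoted from \cite{Rosen_aut}, \cite{Rus} and \cite[6.9.2]{KMT}---so your attempt has to be judged against the standard arguments, and it has a genuine gap at its decisive step. The claim ``for $d=p^n\geq 3$ the plane closure is necessarily singular at $P_\infty$, contradicting $C\cong\mathbb{P}^1_k$'' is a non sequitur, and it is exactly the point you flag in parentheses but never resolve. The regular completion $C$ is the \emph{normalization} of the plane closure $\widehat{C}$, and a singular plane curve of degree $d\geq 3$ can perfectly well have normalization $\mathbb{P}^1$ (rational plane curves exist in every degree; think of a cuspidal cubic). So knowing $p_a(\widehat{C})=(d-1)(d-2)/2>0$ and that all singularities sit over $P_\infty$ yields no contradiction with $p_a(C)=0$: it only says the $\delta$-invariant at $P_\infty$ would have to equal $(d-1)(d-2)/2$. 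To exclude that you would need an \emph{upper} bound on this $\delta$-invariant, equivalently a \emph{lower} bound $p_a(C)\geq 1$ when $p^n\geq 3$, and your proposal contains no such estimate; note that the paper's own genus computation (Corollary \ref{cor_deg_genre}) goes the opposite way, bounding $p_a(C)$ from \emph{above}. The classical proofs proceed instead via automorphisms: if $C\cong\mathbb{P}^1_k$, then by Remark \ref{rem_rat} the point $P_\infty$ is purely inseparable of degree $p^{r}\geq p$, every automorphism of $G$ extends to $C$ by the universal property of the regular completion, and the stabilizer of $P_\infty$ in $\mathrm{Aut}(\mathbb{P}^1_{k_s})=\mathrm{PGL}_2(k_s)$ is trivial once $\deg P_\infty>2$: writing $\alpha\in\overline{k}$ for the geometric point underlying $P_\infty$, the fixed-point condition for $t\mapsto (at+b)/(ct+e)$ reads $c\alpha^2+(e-a)\alpha-b=0$, which forces $b=c=0$ and $a=e$ because $1,\alpha,\alpha^2$ are linearly independent over $k_s$ when $[k_s(\alpha):k_s]>2$. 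Since $G_{k_s}$ acts on itself by infinitely many translations, this forces $\deg P_\infty=2$, hence $p=2$ and $r=1$; then $G$ trivializes over $\kappa(P_\infty)\subset k^{p^{-1}}$, giving $n(G)=1$.

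Two secondary slips are worth noting, although they are not the main problem. The inequality $m\leq n(G)$ that you invoke to place $C$ in $\mathbb{P}^2_k$ is false in general: for instance $y^p=x+ax^{p^2}$ with $a\notin k^p$ is a nontrivial form of $\mathbb{G}_{a,k}$ with $n(G)=1$ and $m=2$ (it trivializes over $k^{p^{-1}}$ via $y'=y-a^{p^{-1}}x^p$), which is precisely why the paper works in weighted projective planes in Corollary \ref{cor_deg_genre}. Also, the point at infinity of the homogenized curve you write down is $[1:0:0]$, not $[0:1:0]$, so the chart $y=1$ in which you propose to test smoothness does not contain it. These are repairable, but the genus-counting strategy itself cannot be completed as described without a sharp lower bound on the genus of the normalization, which is essentially the content of the lemma being proved.
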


\begin{Exe}\label{C_P1}
Let $p=2$, and $G$ be the form of $\mathbb{G}_{a,k}$ defined by the equation
\[y^2=x+ax^2\]
with $a\in k \setminus k^2$ where $k^2=\{x^2 \ | \ x\in k \}$. Then $G$ 
is a non trivial form of $\mathbb{G}_{a,k}$, the regular completion $C$ is defined as a 
curve of $\mathbb{P}^2_k$ by the equation
\[y^2=xz+ax^2.\]
We remark that $C$ is smooth (because it is smooth at $P_\infty$), so according to 
Lemma~\ref{lem_X_P1} $C\cong \mathbb{P}^1_k$ (this follows more directly from the fact 
that $C$ is a conic with a $k$-rational point).
\end{Exe}

\begin{Rem}
We can combine examples \ref{exe_batard} and \ref{C_P1}: let $p=2$ and $G$ be the form of
$\mathbb{G}_{a,k}$ defined by
\[
y^{p^3}=x+tx^p+t^{p^2}x^{p^2},
\]
then $r(G)=2$ and $n(G)=3$. Moreover $G_{k^{p^{-2}}}$ is isomorphic to the form of
$\mathbb{G}_{a,k}$ defined by the equation $y^2=x+t^{p^{-2}}x^2$, so $n'(G)=2$. So we
have constructed an example of a form of $\mathbb{G}_{a,k}$ where the inequality
$n(X) \geq \max(n'(X),r(X))$ (Lemma~\ref{lem_ineq}) is strict.
\end{Rem}

\begin{Exe}
Let $Q$ be an inseparable point of $\mathbb{P}^1_k$, then
$X=\mathbb{P}_k^1\setminus \{Q\}$
is a form of $\mathbb{A}^1_k$ with regular completion $\mathbb{P}^1_k$. If $Q$ 
is not $k$-rational then $X$ is a non trivial form of $\mathbb{A}^1_k$ and if 
${\rm deg}(Q)>2$ then according to Lemma \ref{X_P1=>p=2}, $X_{k_s}$ does not 
have a group structure. In this case $n'(X)=0$ and $n(X)=r(X)$ can be 
arbitrary big.
\end{Exe}

\begin{Exe}
Let $X$ be a form of $\mathbb{A}^1_k$, if $C \cong \mathbb{P}_k^1$ then
$k'=\kappa(P_\infty)$. The converse is false: let $G$ be the form of $\mathbb{G}_{a,k}$ 
defined by the equation $y^p=x+ax^p$ where $a \in k\setminus k^p$. Then $C$ is defined by 
the equation $y^p=xz^{p-1}+ax^p$ in $\mathbb{P}^2_k$, so
$\kappa(P_\infty)=k[a^{p^{-1}}]=k'$. If $p\geq3$, then $C$ isn't smooth (because $C_{k'}$ 
is not regular at $P_\infty$) so $C$ is not isomorphic to $\mathbb{P}^1_k$.
\end{Exe}

\subsection{Arithmetic genus of the regular completion}
\label{genre_naif}

First let us consider a field $k$ of arbitrary characteristic.
Let $a$, $b$ and $c$ be three positive integers, recall that the weighted projective space
$\mathbb{P}_k(a,b,c)$ is defined as ${\rm Proj}(k[x,y,z])$ where $k[x,y,z]$ is the graded 
polynomial $k$-algebra with weight $a$ for $x$, $b$ for $y$ and $c$ for $z$.
If $w$ is an homogeneous element of $k[x,y,z]$, we will denote
$D_+(w)$ the open subset of $\mathbb{P}_k(a,b,c)$ consisting of the homogeneous 
ideals of ${\rm Proj}(k[x,y,z])$ not containing the ideal $(w)$. Then
$\left(D_+(w),{\mathcal{O}_{\mathbb{P}_k(a,b,c)}}_{|D_+(w)}\right)$ is an affine scheme.

Let $C$ be a geometrically integral curve of degree $d$ in $\mathbb{P}_k^2$, we
denote  $p_a(C)$ the arithmetic genus of the curve $C$. It is well known that 
$p_a(C)={(d-1)(d-2)/2}$. In this Subsection we will generalize this result for some curves 
in some weighted projective planes (Proposition \ref{pro_genre_deg}). 
I. Dolgachev has computed the geometric genus of a smooth curve in a 
weighted projective plane \cite[3.5.2]{WPS} under the assumption that the 
characteristic of the field does not divide the weights of the projective 
plane. But we need to compute 
the arithmetic genus of a curve in a weighted projective plane where one of 
the weights is a power of the characteristic (Corollary \ref{cor_deg_genre}). 
Even though the result of 
Proposition \ref{pro_genre_deg} is certainly already known, we did not find a 
reference with an appropriate setting so we include the proof here for the sake of completeness.

\begin{Lem}\label{lem_genre}
Let $k$ be a field of arbitrary characteristic, let $a$ be a positive integer and let $n$ 
be an integer. Let $S=\bigoplus\limits_{d \in \NN} S_d$ be the graded polynomial
$k$-algebra $k[x,y,z]$ with weight $1$ for $x$, $1$ for $y$ and $a$ for $z$.
We denote $\mathbb{P}$ the weighted projective space $\mathbb{P}_k(1,1,a)={\rm Proj}(S)$.

Then $\mathcal{O}_\mathbb{P}(na)$ is an invertible sheaf on $\mathbb{P}$ and
${\rm H}^0\left(\mathbb{P},\mathcal{O}_\mathbb{P}(na)\right)=S_{na}$.
\end{Lem}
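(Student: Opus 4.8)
The plan is to work directly with the weighted projective space $\mathbb{P} = \mathbb{P}_k(1,1,a) = \operatorname{Proj}(S)$ covered by the three affine charts $D_+(x)$, $D_+(y)$, $D_+(z)$, and to first establish that $\mathcal{O}_\mathbb{P}(na)$ is invertible, then compute its global sections. The key point for invertibility is that $na$ is a multiple of $a$, so on each chart the sheaf $\mathcal{O}_\mathbb{P}(na)$ is generated by a single element: on $D_+(x)$ by $x^{na}$, on $D_+(y)$ by $y^{na}$, and on $D_+(z)$ by $z^n$ (each of degree $na$), and the transition functions on overlaps are the obvious units, e.g. $(y/x)^{na}$, $z^n/x^{na} = (z/x^a)^n$, etc. Since these generators are nowhere-vanishing units on the overlaps, $\mathcal{O}_\mathbb{P}(na)|_{D_+(w)} \cong \mathcal{O}_{D_+(w)}$ is free of rank one, hence $\mathcal{O}_\mathbb{P}(na)$ is invertible. (The subtlety of weighted projective spaces is precisely that $\mathcal{O}_\mathbb{P}(m)$ need not be invertible when $a \nmid m$; this is why the hypothesis singles out multiples of $a$.)

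For the computation of $\mathrm{H}^0$, I would use the standard description of sections of $\mathcal{O}_\mathbb{P}(m)$ via the Čech complex on the affine cover, or equivalently, identify a global section with a compatible family of elements. Concretely, $\mathrm{H}^0(\mathbb{P}, \mathcal{O}_\mathbb{P}(na))$ is the degree-$na$ graded piece of the intersection inside the graded localization $S_{xyz}$ (the degree-zero-twist-by-$na$ part). One shows that a section is given on $D_+(x)$ by an element of $(S_x)_{na}$ (degree $na$ part of $S_x = S[x^{-1}]$ in the appropriate grading), and the compatibility on overlaps with the $D_+(y)$ and $D_+(z)$ descriptions forces the section to have no poles along $x$, $y$, or $z$, hence to lie in $S_{na}$ itself. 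Conversely every element of $S_{na}$ clearly restricts to a compatible section on all three charts. This gives the isomorphism $\mathrm{H}^0(\mathbb{P},\mathcal{O}_\mathbb{P}(na)) = S_{na}$.

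The main obstacle is bookkeeping the grading carefully when passing to the localizations $S_x$, $S_y$, $S_z$: because $z$ has weight $a$ rather than $1$, the degree-$na$ pieces of these localized rings must be tracked with some care, and one must verify that the "no poles" argument genuinely forces membership in $S_{na}$ rather than in some larger module. A clean way to handle this is to observe that $\mathbb{P}_k(1,1,a)$ has an explicit finite cover by $\mathbb{P}^2_k$ (via the map corresponding to $x \mapsto x$, $y \mapsto y$, $z \mapsto z$ with $z$ of degree $1$ replaced by a degree-$a$ variable), or to note that the open locus $D_+(z) \cup D_+(x) \cup D_+(y)$ is all of $\mathbb{P}$ and that the complement of $D_+(z)$ has codimension $\geq 2$ so functions extend over it — but the most self-contained route is just the direct Čech-style argument sketched above. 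Since the paper only needs the statement as a stepping stone toward Proposition \ref{pro_genre_deg}, I would keep the verification concrete and chart-based rather than invoking general weighted-projective machinery.
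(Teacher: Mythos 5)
Your proposal is correct and follows essentially the same route as the paper: the paper likewise trivializes $\mathcal{O}_\mathbb{P}(na)$ chart by chart (multiplication by $x^{na}$, $y^{na}$ on $D_+(x)$, $D_+(y)$ and by $z^n$ on $D_+(z)$) and computes $\mathrm{H}^0$ by writing a global section as $P/x^{m}$ and $Q/y^{m}$ and using agreement on $D_+(xy)$ to force it into $S_{na}$. Only your parenthetical aside is off --- it is the complement of $D_+(x)\cup D_+(y)$ (the single point $[0:0:1]$), not the complement of $D_+(z)$ (which is the divisor $z=0$), that has codimension $\geq 2$ --- but your main chart-based argument does not rely on that remark.
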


\begin{proof}
First we will show that
$S_{n a}={\rm H}^0\left(\mathbb{P},\mathcal{O}_\mathbb{P}(n a)\right)$ (in the 
case where $a=1$ and $\mathbb{P}=\mathbb{P}^2_k$ it is a well known fact).
Let $g \in \mathcal{O}_\mathbb{P}(n a)(\mathbb{P})$, then by definition of
$\mathcal{O}_\mathbb{P}(n a)$:
\begin{align*}
g_{|D_+(x)}&=P/x^{m_x} \ {\rm with} \ P \in S_{m_x+n a} {\rm ,} \\
g_{|D_+(y)}&=Q/y^{m_y} \ {\rm with} \  Q \in S_{m_y+n a}{\rm .}
\end{align*}
We can suppose that $m_x=m_y=m$ (if this not the case, for example $m_x>m_y$, consider
$Q'=Qy^{m_x-m_y}$, then $g_{|D_+(y)}=Q'/y^{m_x}$).
The two local sections $g_{|D_+(x)}$ and $g_{|D_+(y)}$ coincide on $D_+(xy)$, so
\[
g_{|D_+(xy)}=P/x^{m}=Q/y^{m}\in S\left[\frac{1}{xy}\right].
\]
Then, in $S$ we have the equality $x^{m}Q=y^{m}P$.
So $y^{m}$ divide $Q$, and $g=Q/y^{m}$ is a homogeneous polynomial
of degree $m+n a-m=n a$. Thus $g \in S_{n a}$, so
${\rm H}^0\left(\mathbb{P},\mathcal{O}_\mathbb{P}(n a)\right) \subset S_{n a}$.
Conversely, it is clear that $S_{n a}\subset
{\rm H}^0\left(\mathbb{P},\mathcal{O}_\mathbb{P}(n a)\right)$.

Next, to show that $\mathcal{O}_\mathbb{P}(na)$ is an invertible sheaf on
$\mathbb{P}$, it is enough to show that for $U=D_+(x)$, $D_+(y)$ and $D_+(z)$, the
$\mathcal{O}_\mathbb{P}(U)$-module $\mathcal{O}_\mathbb{P}(na)(U)$ is isomorphic to
$\mathcal{O}_\mathbb{P}(U)$.

Let $w$ be $x$ or $y$, the multiplication by $w^{na}$:
\[
\begin{array}{rrcl}
{\rm mult}_{w^{na}}:&\mathcal{O}_\mathbb{P}(D_+(w))&
\rightarrow &\mathcal{O}_\mathbb{P}(na)(D_+(w)) \\
&P/w^{{\rm deg}(P)}&\mapsto&w^{na}P/w^{{\rm deg}(P)},
\end{array}
\]
has for inverse the multiplication by $1/w^{na}$. So ${\rm mult}_{w^{na}}$ is an 
isomorphism.

For $D_+(z)$, the isomorphism is the multiplication by $z^n$:
\[
\begin{array}{rrcl}
{\rm mult}_{z^{n}}:&\mathcal{O}_\mathbb{P}(D_+(z))&
\rightarrow& \mathcal{O}_\mathbb{P}(na)(D_+(z)) \\
&P/z^{m}&\mapsto&z^{n}P/z^{m},
\end{array}
\]
where $P \in S_{ma}$.
\end{proof}

\begin{Pro}\label{pro_genre_deg}
Let $k$ be a field of arbitrary characteristic,
let $a$ be a positive integer. We denote $\mathbb{P}$ the weighted projective space
$\mathbb{P}_k(1,1,a)$.

Let $C$ be a geometrically integral curve of degree $d$ in $\mathbb{P}$, such that $d$ is 
a multiple of $a$. Let $h$ be the integer $d/a$. Then the arithmetic genus of $C$ is:
\[
p_a(C)=\frac{(h-1)(d-2)}{2}.
\]
\end{Pro}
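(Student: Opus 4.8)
The plan is to compute the arithmetic genus via the Hilbert polynomial of $C$, exactly as one does for plane curves, but carried out on the weighted projective plane $\mathbb{P}=\mathbb{P}_k(1,1,a)$. Recall $p_a(C)=1-\chi(\mathcal{O}_C)$, so it suffices to compute $\chi(\mathcal{O}_C)$. Since $C$ has degree $d=ha$ in $\mathbb{P}$, it is the zero locus of a homogeneous element $f\in S_d=S_{ha}$, and we have a short exact sequence of sheaves on $\mathbb{P}$
\[
0\longrightarrow \mathcal{O}_\mathbb{P}(-d)\xrightarrow{\;\cdot f\;} \mathcal{O}_\mathbb{P}\longrightarrow \mathcal{O}_C\longrightarrow 0.
\]
For this to make sense I first need that $\mathcal{O}_\mathbb{P}(-d)$ and $\mathcal{O}_\mathbb{P}$ are invertible; since $d=ha$ is a multiple of $a$, Lemma~\ref{lem_genre} applies and gives both the invertibility and the identification of global sections of the twists $\mathcal{O}_\mathbb{P}(na)$ with $S_{na}$. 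Taking Euler characteristics, $\chi(\mathcal{O}_C)=\chi(\mathcal{O}_\mathbb{P})-\chi(\mathcal{O}_\mathbb{P}(-d))$, so everything reduces to computing $\chi(\mathcal{O}_\mathbb{P}(m))$ for $m=0$ and $m=-d$.

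Next I would compute the relevant cohomology of line bundles on $\mathbb{P}$. For $\mathcal{O}_\mathbb{P}$ one has $\mathrm{H}^0=k$ and, $\mathbb{P}$ being a normal projective surface with rational singularities (the only singular point is the cyclic quotient singularity at $[0:0:1]$), $\mathrm{H}^1(\mathcal{O}_\mathbb{P})=0$; so $\chi(\mathcal{O}_\mathbb{P})=1$ once I also record $\mathrm{H}^2(\mathcal{O}_\mathbb{P})=0$. For $\mathcal{O}_\mathbb{P}(-d)$ with $d=ha>0$ I want $\mathrm{H}^0=\mathrm{H}^1=0$ and to identify $\mathrm{H}^2$. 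The cleanest route is Serre duality on the (Cohen–Macaulay, normal) surface $\mathbb{P}$: one computes the dualizing sheaf $\omega_\mathbb{P}$. Since $S=k[x,y,z]$ with weights $(1,1,a)$, the canonical module of $S$ is $S(-(2+a))$, hence $\omega_\mathbb{P}\cong\mathcal{O}_\mathbb{P}(-(a+2))$, which is a genuine sheaf but a priori only a reflexive rank-one sheaf; however pulling everything back to the smooth locus and using that the singular point is a single cyclic quotient lets me conclude $\mathrm{H}^2(\mathbb{P},\mathcal{O}_\mathbb{P}(-d))\cong\mathrm{H}^0(\mathbb{P},\omega_\mathbb{P}(d))^\vee$. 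The twist $\omega_\mathbb{P}(d)=\mathcal{O}_\mathbb{P}(d-a-2)$ has global sections computable by an argument of the same shape as Lemma~\ref{lem_genre}: these are the weighted-degree $d-a-2=(h-1)a-2$ monomials in $x,y,z$ (with $z$ of weight $a$), and counting them is an elementary combinatorial sum. Adding up the two contributions, $p_a(C)=1-\chi(\mathcal{O}_C)=\chi(\mathcal{O}_\mathbb{P}(-d))=\dim_k\mathrm{H}^2(\mathcal{O}_\mathbb{P}(-d))$, which after the monomial count collapses to $\tfrac{(h-1)(d-2)}{2}$.

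An alternative, more self-contained route — and probably the one I would actually write up to avoid invoking Serre duality and rational-singularity facts that the paper has not set up — is a direct Čech computation on the standard affine cover $\{D_+(x),D_+(y),D_+(z)\}$ of $\mathbb{P}$, of which only $D_+(z)$ meets the singular point. On this cover $\mathrm{H}^i(\mathbb{P},\mathcal{O}_\mathbb{P}(m))$ is computed from the Čech complex, and since all three charts and their pairwise/triple intersections are affine (Lemma~\ref{lem_genre} already notes $D_+(w)$ is affine), the higher cohomology is the cohomology of the explicit complex of modules of Laurent-type monomials; one extracts $\mathrm{H}^0$ and $\mathrm{H}^2$ (with $\mathrm{H}^1=0$) by bookkeeping which weighted monomials survive. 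Either way, the main obstacle is the second cohomology of $\mathcal{O}_\mathbb{P}(-d)$: unlike the plane case one cannot simply quote $\mathrm{H}^i(\mathbb{P}^2,\mathcal{O}(m))$, and one must handle the weight-$a$ grading on $z$ carefully — in particular checking that $d$ being a multiple of $a$ is exactly what makes $\mathcal{O}_\mathbb{P}(-d)$ invertible (so that the Koszul sequence is a sequence of line bundles and the genus formula comes out as a clean polynomial in $h$ and $d$). Once that cohomology count is in hand, the arithmetic is the routine simplification $\chi(\mathcal{O}_\mathbb{P})-\chi(\mathcal{O}_\mathbb{P}(-ha))=\tfrac{(h-1)(d-2)}{2}$, and one should sanity-check it against $a=1$, $h=d$, where it recovers the classical $\tfrac{(d-1)(d-2)}{2}$.
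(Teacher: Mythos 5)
Your proposal is essentially correct, but it takes a genuinely different route from the paper. The paper never touches $\mathrm{H}^2$ of the weighted plane: it applies Riemann--Roch on $C$ to the sheaves $\mathcal{O}_C(na)$ for $n\gg 0$, so that Serre vanishing kills $\mathrm{H}^1(C,\mathcal{O}_C(na))$ and $\mathrm{H}^1(\mathbb{P},\mathcal{O}_\mathbb{P}(na-ha))$, reduces everything to $\dim_k \mathrm{H}^0(\mathbb{P},\mathcal{O}_\mathbb{P}(\delta a))=\dim_k S_{\delta a}$ via the twisted ideal-sheaf sequence (Lemma~\ref{lem_genre} supplies both the invertibility of $\mathcal{O}_\mathbb{P}(na)$ and the identification of its sections with $S_{na}$), and reads off $p_a(C)$ from the constant term of the resulting Hilbert polynomial, with $\dim_k S_{\delta a}$ counted by the Hilbert series $1/\bigl((1-t)^2(1-t^a)\bigr)$. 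Your route instead evaluates $\chi(\mathcal{O}_C)$ at twist zero, which forces you to control $\mathrm{H}^1$ and $\mathrm{H}^2$ of $\mathcal{O}_\mathbb{P}$ and $\mathcal{O}_\mathbb{P}(-d)$; this is doable and your monomial count $\dim_k S_{d-a-2}=\tfrac{(h-1)(d-2)}{2}$ does come out right, but it requires Serre duality on $\mathbb{P}$ with the non-invertible reflexive twist $\omega_\mathbb{P}\cong\mathcal{O}_\mathbb{P}(-(a+2))$, or else the explicit \v{C}ech computation you sketch. Two cautions on your first variant: in the paper's application $a$ is a power of $p=\mathrm{char}\,k$, so the ``cyclic quotient singularity'' is a quotient by the infinitesimal group scheme $\mu_a$ and the rational-singularities language should be replaced by the characteristic-free description of the vertex as the cone over the rational normal curve (or by the \v{C}ech/local-cohomology computation, which is insensitive to this); and the identification $\omega_\mathbb{P}\otimes\mathcal{O}_\mathbb{P}(d)\cong\mathcal{O}_\mathbb{P}(d-a-2)$ needs the remark that tensoring the reflexive twist by an honest line bundle behaves well. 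Your \v{C}ech alternative is the sounder write-up, but it builds noticeably more machinery than the paper needs: the paper's large-twist trick buys elementarity (only $\mathrm{H}^0$'s and Serre vanishing), while yours buys, as a by-product, the full cohomology of line bundles on $\mathbb{P}_k(1,1,a)$.
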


\begin{proof}
Let $n$ be a positive integer, according to Riemann-Roch Theorem \cite[Th.~7.3.17]{QL},
\begin{equation}\label{eq_RR}
{\rm dim}_k \ {\rm H}^0\left(C,\mathcal{O}_C(na)\right)-
{\rm dim}_k \ {\rm H}^1(C,\mathcal{O}_C(na))
={\rm deg}(\mathcal{O}_C(na))+1-p_a(C).
\end{equation}

According to \cite[Th.~5.3.2]{QL}, if $n$ is large enough, then 
${\rm H}^1(C,\mathcal{O}_C(na))=0$. We make this assumption throughout this proof.

We denote $f:C\rightarrow \mathbb{P}$ the inclusion, and $\mathcal{I}_C$ the sheaf of 
ideal of $\mathcal{O}_{\mathbb{P}}$ that defines the closed subvariety $C$; then
\[
0 \rightarrow \mathcal{I}_C \rightarrow \mathcal{O}_\mathbb{P} \rightarrow
f_*\mathcal{O}_C \rightarrow 0
\]
is an exact sequence of sheaves on $\mathbb{P}$. Moreover
$\mathcal{I}_C \cong \mathcal{O}_\mathbb{P}(-d)= \mathcal{O}_\mathbb{P}(-ha)$, and the 
sheaf $\mathcal{O}_\mathbb{P}(na)$ is invertible (Lemma \ref{lem_genre}), so in particular 
flat. Then 
\begin{equation}\label{suite_C}
0 \rightarrow \mathcal{O}_\mathbb{P}(na-ha) \rightarrow \mathcal{O}_\mathbb{P}(na)
\rightarrow f_*\mathcal{O}_C(na) \rightarrow 0
\end{equation}
is an exact sequence of sheaves on $\mathbb{P}$.

As above, we can take $n$ large enough, so that
${{\rm H}^1\left(\mathbb{P},\mathcal{O}_\mathbb{P}(na-ha)\right)=0}$. Then the 
cohomological exact sequence induced by the sequence \eqref{suite_C} is
\[
0 \rightarrow {\rm H}^0\left(\mathbb{P},\mathcal{O}_\mathbb{P}(na-ha)\right)
\rightarrow {\rm H}^0\left(\mathbb{P},\mathcal{O}_\mathbb{P}(na)\right)
\rightarrow {\rm H}^0\left(C,\mathcal{O}_C(na)\right)
\rightarrow 0.
\]
Thus 
\begin{equation}\label{eq_H0}
{\rm dim}_k \ {\rm H}^0\left(C,\mathcal{O}_C(na)\right)={\rm dim}_k \
{\rm H}^0\left(\mathbb{P},\mathcal{O}_\mathbb{P}(na)\right)-
{\rm dim}_k \ {\rm H}^0\left(\mathbb{P},\mathcal{O}_\mathbb{P}(na-ah)\right).
\end{equation}

Next, we compute 
${\rm dim}_k \ {\rm H}^0\left(\mathbb{P},\mathcal{O}_\mathbb{P}(\delta a)\right)$. 
As in Lemma \ref{lem_genre}, let $S=\bigoplus\limits_{d \in \NN} S_d$ be the graded
$k$-algebra $k[x,y,z]$ with weight $1$ for $x$, $1$ for $y$ and $a$ for $z$.
According to \cite[Chap.~5 §5.1 Pro.~1]{bbk_alg_lie46},
${\rm dim}_k \ S_{\delta a}$ is the
$\delta a$-th coefficient of the formal series
\[
\frac{1}{(1-t)^2(1-t^a)}=\left(\sum_{l\geq 0}(l+1)t^l\right)
\left( \sum_{i\geq 0} t^{ia}\right).
\]

Then 
\begin{align*}
{\rm dim}_k \ {\rm H}^0\left(\mathbb{P},\mathcal{O}_\mathbb{P}(\delta a)\right)=
{\rm dim}_k \ S_{\delta a}=&
\sum_{l+ai=\delta a}l+1 \\
 =& \sum_{i=0}^\delta \delta a -ia+1\\
 =& (\delta a +1)(\delta +1)-a\frac{\delta(\delta+1)}{2} \\
 =& \frac{(\delta+1)(\delta a +2)}{2}.
\end{align*}

By combining the equations \eqref{eq_RR}, \eqref{eq_H0} and the equation above we obtain:
\begin{align*}
{\rm deg}\left(\mathcal{O}_C(na)\right)+1-p_a(C)=&\frac{(n+1)(na+2)}{2}-
\frac{(n-h+1)(na-ha+2)}{2} \\
=& nah+ \frac{2h+ah-ah^2}{2}.
\end{align*}

Finally:
\[
p_a(C)=1+ \frac{ah^2-2h-ah}{2}=\frac{(h-1)(ah-2)}{2}.
\]
\end{proof}

We are going to apply Proposition \ref{pro_genre_deg} to the study of the arithmetic 
genus of the regular completion of the forms of $\mathbb{G}_{a,k}$.
This genus has been studied by C.~Greither for a form $X$ of $\mathbb{A}^1_k$ in the 
particular case when the minimal field $k'$ such that $X_{k'}\cong \mathbb{A}^1_{k'}$
is of degree $p$ \cite[Th.~3.4]{haut1} and \cite[Th.~4.6]{haut1}.

\begin{Cor}\label{cor_deg_genre}
Let $k$ be a field of characteristic $p>0$, and $G$ be a form of $\mathbb{G}_{a,k}$.
We note $n=n(G)$ and $m$ the smallest integer such that $G$ is defined by
$y^{p^n}=x+a_1x^p+\dots+a_mx^{p^m}$. Let $C$ be the regular completion of $G$,
then 
\begin{equation}\label{genre_C}
p_a(C)\leq \frac{(p^{{\rm min}(n,m)}-1)(p^{{\rm max}(n,m)}-2)}{2}.
\end{equation}
Moreover, $a_m \notin k^p$ if and only if \eqref{genre_C} is an equality.
\end{Cor}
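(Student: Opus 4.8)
The plan is to realize $C$ explicitly as a curve in the weighted projective plane $\mathbb{P}_k(1,1,p^n)$ and then apply Proposition~\ref{pro_genre_deg}. By Theorem~\ref{rus.2.1} and Remark~\ref{n(G)}, the group $G$ is $\operatorname{Spec}\bigl(k[x,y]/(y^{p^n}-f(x))\bigr)$ with $f(x)=x+a_1x^p+\dots+a_mx^{p^m}$ and $a_m\neq 0$, and $n=n(G)$. First I would homogenize: assign weight $1$ to $x$, weight $1$ to $y$, and weight $p^n$ to a new variable $z$, and homogenize the affine equation $y^{p^n}-f(x)=0$. Each monomial $x^{p^i}$ has degree $p^i$, and $y^{p^n}$ has degree $p^n$; the homogenization with respect to the grading is
\[
F(x,y,z)=y^{p^n}-\Bigl(x\,z^{p^n-1}+a_1x^pz^{p^n-p}+\dots+a_mx^{p^m}z^{p^n-p^m}\Bigr)
\]
when $m\leq n$, and when $m>n$ the term of top degree is $a_mx^{p^m}$, so the homogeneous equation has degree $p^m$ and reads $y^{p^n}z^{p^m-p^n}-\bigl(xz^{p^m-1}+\dots+a_mx^{p^m}\bigr)=0$. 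In either case the curve $\widetilde C\subset\mathbb{P}_k(1,1,p^n)$ cut out by $F$ is geometrically integral: its affine chart $D_+(z)$ recovers $G_{\bar k}$, which is integral since $y^{p^n}-f(x)$ is irreducible over $\bar k$ (it is separable in $y$ with $f$ a nonzero polynomial, hence $f(x)$ is not a $p^n$-th power in $\bar k[x]$ as $a_m\neq 0$). The degree of $\widetilde C$ is $d=p^{\max(n,m)}$, which is a multiple of $a=p^n$, with $h=d/a=p^{\max(n,m)-n}$; note $\min(n,m)$ appears because $h-1$ involves $p^{\max(n,m)-n}$ while the factor $d-2=p^{\max(n,m)}-2$ — one has to match $(h-1)(d-2)$ with $(p^{\min(n,m)}-1)(p^{\max(n,m)}-2)$, which works precisely when $h=p^{\min(n,m)}$... so actually I need $\max(n,m)-n=\min(n,m)$, which holds iff $n\leq m$; the case $m<n$ gives $h=1$ and $p_a(\widetilde C)=0$, consistent with the bound $(p^m-1)(p^n-2)/2$ only being an inequality there. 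This sign-bookkeeping is where I must be careful.

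The next step is to show that $\widetilde C$ is the regular completion $C$ of $G$, i.e. that $\widetilde C$ is a \emph{regular} scheme, OR — and this is the crucial point for the inequality versus equality dichotomy — to recognize that $\widetilde C$ need not be regular and that the arithmetic genus only \emph{bounds} from above via the normalization. Concretely, $C\to\widetilde C$ is the normalization, and since normalization of a curve can only decrease the arithmetic genus (the conductor/$\delta$-invariant contribution is nonnegative), we get $p_a(C)\leq p_a(\widetilde C)=(h-1)(d-2)/2$ by Proposition~\ref{pro_genre_deg}, giving \eqref{genre_C}. So the first half of the Corollary follows once I have (a) $\widetilde C$ geometrically integral of the stated degree in $\mathbb{P}_k(1,1,p^n)$, and (b) the general fact $p_a(C)\leq p_a(\widetilde C)$ for the normalization, which follows from the short exact sequence $0\to\mathcal{O}_{\widetilde C}\to\nu_*\mathcal{O}_C\to\mathcal{S}\to 0$ with $\mathcal{S}$ a skyscraper sheaf, taking Euler characteristics. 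Here I'd want to check that the point at infinity $P_\infty$ is the only point of $\widetilde C$ lying off the chart $D_+(z)$, so that the only place $\widetilde C$ can fail to be regular is at $P_\infty$ — indeed setting $z=0$ in $F$ forces $y^{p^n}=a_mx^{p^m}$ (when $m\geq n$, up to the leading term) or $y^{p^n}=0$, cutting out a single point.

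For the equivalence "$a_m\notin k^p$ iff \eqref{genre_C} is an equality," I would argue that equality holds exactly when the normalization map $C\to\widetilde C$ is an isomorphism, i.e. $\widetilde C$ is already regular, equivalently $\widetilde C$ is regular at $P_\infty$ (it is regular everywhere else, being smooth on $D_+(z)$ over $k$ since $y^{p^n}-f(x)$ has nonvanishing $\partial/\partial y = p^n y^{p^n-1}=0$... wait, no — it is regular on $D_+(z)$ because $G$ is a smooth $k$-group scheme). So the question reduces to: $\widetilde C$ is regular at $P_\infty$ iff $a_m\notin k^p$. In the chart $D_+(y)$ (coordinates $u=x/y$, $v=z/y^{p^n}$, bearing in mind the weights), $P_\infty$ is the point $u=v=0$ and one computes the local ring's maximal ideal; regularity at $P_\infty$ is a condition on whether a certain partial derivative involving $a_m$ vanishes — and $a_m^{1/p}$ entering the residue field $\kappa(P_\infty)$ is precisely what obstructs regularity when $a_m\in k^p$. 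This matches Russell's computation that $\kappa(P_\infty)$ is generated by $p^{-n}$-th roots but that smoothness over $k$ at $P_\infty$ fails unless the "leading" coefficient is a non-$p$-th-power. I expect this last local computation at $P_\infty$ — pinning down exactly when $\widetilde C$ is regular there in terms of $a_m\notin k^p$ — to be the main obstacle, since it requires carefully choosing affine coordinates on the weighted chart $D_+(y)$ and distinguishing the cases $m<n$, $m=n$, $m>n$; the cleanest route may be to base change to $k'=k(a_1^{p^{-n}},\dots,a_m^{p^{-n}})$, where $\widetilde C_{k'}$ becomes the completion of $\mathbb{A}^1_{k'}$, and track the arithmetic genus drop under $k\hookrightarrow k'$ using that $p_a$ is invariant under flat base change while regularity is not, reducing "equality" to the statement that $\widetilde C$ and $\widetilde C_{k'}$ have the same normalization length, which is governed by $[\,k(a_m^{p^{-1}}):k\,]$ being trivial or not — i.e. by $a_m\in k^p$ or not.
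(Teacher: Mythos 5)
Your high-level strategy is the same as the paper's: compactify $G$ in a weighted projective plane, compute the arithmetic genus there via Proposition~\ref{pro_genre_deg}, get the inequality from the conductor sequence of the normalization, and characterize equality by regularity of the naive completion at the point at infinity in terms of $a_m$. But your choice of weights is not a ``sign-bookkeeping'' issue to be sorted out later --- it is wrong in a way that breaks every subsequent step. With $z$ of weight $p^n$, the expression $xz^{p^n-1}$ has weighted degree $1+(p^n-1)p^n$, so your $F$ is not homogeneous; moreover the chart $D_+(z)$ of $\mathbb{P}_k(1,1,p^n)$ is not ${\rm Spec}\,k[x,y]$ (the degree-zero part of $k[x,y,z][1/z]$ is generated by the $p^n+1$ monomials $x^ay^b/z$ with $a+b=p^n$), so the closure of $G$ there is not cut out by your equation and does not have $G$ as its affine chart. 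Your own attempted reconciliation also fails: $h=p^{\max(n,m)-n}$ equals $p^{\min(n,m)}$ essentially never (for $m<n$ it gives $h=1$, hence $p_a=0$, which is false, e.g.\ for Example~\ref{exe_batard} where $m=2<3=n$ and $C\not\cong\mathbb{P}^1_k$ by Lemma~\ref{X_P1=>p=2}). The correct construction keeps $z$ of weight $1$ and puts the nontrivial weight on the variable whose pure power has the \emph{smaller} exponent: for $n\leq m$ take $\mathbb{P}_k(1,p^{m-n},1)$, so that $y^{p^n}$ and $a_mx^{p^m}$ both have degree $d=p^m$ and $h=d/p^{m-n}=p^n$; for $n>m$ take $\mathbb{P}_k(p^{n-m},1,1)$, giving $d=p^n$ and $h=p^m$. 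Only then does Proposition~\ref{pro_genre_deg} return $(p^{\min(n,m)}-1)(p^{\max(n,m)}-2)/2$.

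The second gap is the regularity test at $P_\infty$. A Jacobian or partial-derivative criterion is vacuous here, since $\partial(y^{p^n})/\partial y=0$ in characteristic $p$ (as you notice mid-sentence), and smoothness over $k$ is in any case the wrong notion: $C$ is regular but usually not smooth at $P_\infty$. What is needed, and what the paper does, is the explicit local computation $\mathcal{O}_{\widehat C,\infty}/(z)\cong k[y]/(y^{p^n}-a_m)$ (resp.\ $k[x]/(x^{p^m}-a_m^{-1})$ when $n>m$): this quotient is a field exactly when $a_m\notin k^p$, in which case $z$ is a uniformizer, $\widehat C$ is regular and hence equals $C$, giving equality; when $a_m\in k^p$ the local ring is not normal and the conductor sequence forces a strict drop. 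Your inequality $p_a(C)\leq p_a(\widehat C)$ via the normalization sequence is correct and is what the paper uses, and one small point: $y^{p^n}-f(x)$ is purely inseparable, not separable, in $y$, and $f$ fails to be a $p$-th power because $f'=1$, not because $a_m\neq 0$ (though geometric integrality is automatic since $G$ is a smooth connected group).
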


In order to show this Corollary we are going to introduce a "naive completion"
$\widehat{C}$ of $G$. The "naive completion" will give us a geometrical interpretation of 
the condition $a_m \notin k^p$: this is equivalent to $\widehat{C}$ being regular.

First we suppose that $n\leq m$.
Let $\widehat{C}$ be the closure of $G$ in $\mathbb{P}_k(1,p^{m-n},1)$, then
$\widehat{C}$ is defined by the homogeneous polynomial
\begin{equation}\label{eq_naif}
y^{p^n}-\left( xz^{p^n-1}+a_1x^pz^{p^n-p}+\dots+a_mx^{p^m}\right),
\end{equation}
where $x$ has weight $1$, $y$ has weight $p^{m-n}$, and $z$ has weight $1$.

Let $A$ be the graded $k$-algebra defined as the quotient of the graded algebra
$k[x,y,z]$ (with weights as above) by the ideal generated by the homogeneous polynomial 
\eqref{eq_naif}, then $\widehat{C}={\rm Proj}(A)$.

Let us consider the affine open $D_+(x)$ of $\mathbb{P}_k(1,p^{m-n},1)$, the affine 
variety ${\widehat{C}\cap D_+(x)}$ is the spectrum of $A_{(x)}$, the sub-algebra of
$A[\frac{1}{x}]$ of elements of degree $0$. Then $A_{(x)}$ is generated by
$y/x^{p^{m-n}}$ and $z/x$.

Let $Y=y/x^{p^{m-n}}$ and $Z=z/x$, then
\[
A_{(x)}=k[Y,Z]/\left(Y^{p^n}-\left(Z^{p^n-1}+a_1Z^{p^n-p}+\dots+a_m\right)\right).
\]

Also $\widehat{C}\setminus G$ is a unique point that we will note $\infty$. A 
straightforward computation shows that
\[
\frac{\mathcal{O}_{\widehat{C},\infty}}{(z)}\cong \frac{k[y]}{(y^{p^n}-a_m)}.
\]
If $a_m \notin k^p$ then $\frac{k[y]}{(y^{p^n}-a_m)}$ is a field, so $\widehat{C}$ is 
regular, thus $\widehat{C}$ is the regular completion $C$.

Let us consider the morphism $\widehat{C} \rightarrow \mathbb{P}^1_k$ induced by
the projection $p_x:G \rightarrow \mathbb{G}_{a,k}$. The scheme theoretic 
fibre of this morphism at $[1:0]$ is
${\rm Spec}\left(\mathcal{O}_{\widehat{C},\infty}/(z)\right)$ so 
$z$ is a uniformizing parameter of $\widehat{C}$ at $\infty$ if and only if
$\widehat{C}$ is regular if and only if $a_m \notin k^p$.

If $n > m$, the construction of the naive completion $\widehat{C}$ is almost the same, 
except that $\widehat{C}$ is the closure of $G$ in $\mathbb{P}_k(p^{n-m},1,1)$. The curve 
$\widehat{C}$ is defined by the homogeneous equation
\[
y^{p^n}=xz^{p^n-1}+a_1x^pz^{p^m-p}+\dots+a_mx^{p^m},
\]
where $x$ has weight $p^{n-m}$, $y$ has weight $1$, and $z$ has weight $1$.

And $\widehat{C}\setminus G$ is a unique point still denoted $\infty$, then
\[
\frac{\mathcal{O}_{\widehat{C},\infty}}{(z)}\cong \frac{k[x]}{(x^{p^m}-a_m^{-1})}.
\]
By the same argument as above $a_m \notin k^p$ if and only if $\widehat{C}$ is regular.

\begin{proof}
Assume $a_m \notin k^p$. Then
we have shown that $\widehat{C}$ is regular, so
(by unicity of the regular completion) $\widehat{C}$ is the regular 
completion $C$. And according to Proposition~\ref{pro_genre_deg}, we have
\[p_a(C)=\frac{1}{2}(p^{{\rm min}(n,m)}-1)(p^{{\rm max}(n,m)}-2).\]

On the other hand, if $a_m \in k^p$, then $\widehat{C}$ is not normal. Let
$\pi:C \rightarrow \widehat{C}$ 
be the normalisation. There is an exact sequence of sheaves on $\widehat{C}$:
\[
0 \rightarrow \mathcal{O}_{\widehat{C}} \rightarrow \pi_*\mathcal{O}_C \rightarrow 
\mathcal{F} \rightarrow 0
\]
where $\mathcal{F}$ is a non trivial sheaf with support $\infty$. So 
$p_a(C)=p_a(\widehat{C})-{\rm dim}_k H^0(\widehat{C},\mathcal{F})$, then 
\[p_a(C)< p_a(\widehat{C})=\frac{1}{2}(p^{{\rm min}(n,m)}-1)(p^{{\rm max}(n,m)}-2).\]
\end{proof}

\section[Picard group of the forms of the affine line]{Picard group of the forms of $\mathbb{A}^1_k$}
\label{sec_pic_grp}

\subsection{An exact sequence of Picard groups}\label{pic_X_C}

Let $X$ be a form of $\mathbb{A}^1_k$, in this Subsection we will link
the Picard group of $X$ to the Picard group of $C$ by adapting the argument of 
\cite[Th.~6.10.1]{KMT}. The curves $C$ and $X$ are regular, so we can identify 
the Picard group with the divisor class group, thus we note $[P_\infty]$ the 
class of the point $P_\infty$ in the Picard group of $C$. The following 
sequences are exact \cite[Pro.~II.6.5]{hart.ag}:
\begin{equation}\label{exacte.1}
0 \rightarrow \ZZ[P_\infty] \rightarrow {\rm Pic}(C) 
\rightarrow {\rm Pic}(X) \rightarrow 0,
\end{equation}
and
\begin{equation}\label{deg_C}
0 \rightarrow {\rm Pic}^0(C) \rightarrow {\rm Pic}(C) \rightarrow m(X)\ZZ
\rightarrow 0
\end{equation}
where $m(X)$ is the invariant of $X$ defined in \ref{def_m}.

By combining the two exact sequences \eqref{exacte.1} and \eqref{deg_C} we 
obtain the following exact sequence:
\begin{equation}\label{exacte.G.pic}
0 \rightarrow {\rm Pic}^0(C) \rightarrow {\rm Pic}(X) \rightarrow 
m(X)\ZZ/p^{r(X)}\ZZ 
\rightarrow 0.
\end{equation}

\begin{Exe}
As in Example \ref{C_P1}, let $p=2$ and let $G$ be the form of
$\mathbb{G}_{a,k}$ defined by the equation $y^2=x+ax^2$ where $a \notin k^2$. 
Since $C\cong \mathbb{P}^1_k$, we obtain ${\rm Pic}(G) \cong \ZZ/2\ZZ$.

More generally, let $Q$ be a purely inseparable point of $\mathbb{P}^1_k$, 
then $X=\mathbb{P}_k^1\setminus \{Q\}$ is a non trivial form of
$\mathbb{A}^1_k$ and ${\rm Pic}(X)\cong \ZZ/{\rm deg}(Q)\ZZ$.
\end{Exe}

\begin{Exe}
Let $k$ be a field of characteristic $p\neq 2$, let $G$ be the form of
$\mathbb{G}_{a,k}$ defined by the equation $y^p=x+ax^p$ where $a \notin k^p$.

Let $P_0$ be the neutral element of $G$, the morphism
$$P\in G(k) \mapsto [P]-[P_0] \in {\rm Pic}^0(C)$$
is injective \cite[Th.~6.7.9]{KMT},
so if $G(k)$ is infinite (e.g. if $k$ is separably closed), then
${\rm Pic}(G)$ is an infinite group (Recall that over a perfect field, the 
Picard group of an affine connected smooth algebraic group is finite
\cite[Lem.~6.9]{san_grp_br}).
\end{Exe}

\begin{Rem}
For every extension $K$ of $k$, there is a regular completion $C^K$ of $X_K$
which is not necessary the base change $C_K$ (if $K$ is not a separable 
extension of $k$, then $C_K$ can be no longer regular). So there is an exact 
sequence
\[
0 \rightarrow {\rm Pic}^0\left(C^K\right) \rightarrow {\rm Pic}(X_K) 
\rightarrow m(X_K)\ZZ/p^{r(X_K)}\ZZ \rightarrow 0.
\]
\end{Rem}

This motivates the study of $\textrm{ Pic}^0_{C/k}$ which is going to be done 
in Section~\ref{pic_fonct}. Before that in Section \ref{sec2} we are going to 
gather some results on the Picard functor that will be used in Section 
\ref{pic_fonct}.

\subsection{Proof of the main theorem}

\begin{The}\label{main_th}
Let $X$ be a non trivial form of $\mathbb{A}^1_k$. 
\begin{enumerate}[(i)]
\item ${\rm Pic}(X)$ is $p^{n(X)}$-torsion.

\item If $X$ has a $k$-rational point (e.g. $X$ is a form of
$\mathbb{G}_{a,k}$ or $k=k_s$), then ${{\rm Pic}(X)\neq \{0\}}$.
\end{enumerate}
\end{The}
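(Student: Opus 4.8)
The plan is to deduce both assertions from the exact sequence \eqref{exacte.G.pic}, namely
\[
0 \rightarrow {\rm Pic}^0(C) \rightarrow {\rm Pic}(X) \rightarrow m(X)\ZZ/p^{r(X)}\ZZ \rightarrow 0,
\]
together with the relative Frobenius $F^n_X : X \rightarrow X^{(p^n)}$ for $n = n(X)$. For assertion $(i)$, the key point is that since $X^{(p^n)} \cong \mathbb{A}^1_{k^{p^{-n}}}$, we have ${\rm Pic}(X^{(p^n)}) = \{0\}$, and the composite of pullback $(F^n_X)^* : {\rm Pic}(X^{(p^n)}) \rightarrow {\rm Pic}(X)$ with the transfer/norm map (or, working geometrically, the map induced by the finite morphism in the other direction) realizes multiplication by $\deg(F^n_X) = p^n$ on ${\rm Pic}(X)$. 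More concretely: for a curve, the finite flat morphism $F^n_X$ of degree $p^n$ gives maps ${\rm Pic}(X^{(p^n)}) \xrightarrow{(F^n_X)^*} {\rm Pic}(X) \xrightarrow{(F^n_X)_*} {\rm Pic}(X^{(p^n)})$ whose composite in either order is multiplication by $p^n$. Since ${\rm Pic}(X^{(p^n)}) = 0$, the composite $(F^n_X)_* \circ (F^n_X)^*$ being multiplication by $p^n$ on ${\rm Pic}(X)$ forces $p^n \cdot {\rm Pic}(X) = (F^n_X)^*\bigl((F^n_X)_* {\rm Pic}(X)\bigr) \subseteq (F^n_X)^*({\rm Pic}(X^{(p^n)})) = 0$. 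This is precisely the argument the paper announces it will generalize in Proposition~\ref{main_th_gen}, so I would set it up in a way that transparently only uses the existence of a finite degree-$p^n$ morphism to a variety with trivial Picard group.

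For assertion $(ii)$, suppose $X$ has a $k$-rational point $P_0$; by Remark~\ref{rem_rat} we already know $P_\infty$ is not $k$-rational when $X$ is non trivial, so $\deg(P_\infty) = p^{r(X)}$ with $r(X) \geq 1$. Now examine the cokernel term $m(X)\ZZ/p^{r(X)}\ZZ$ in \eqref{exacte.G.pic}: since $X$ has a $k$-rational point, $C$ has a closed point of degree $1$ (namely $P_0$), hence $m(X) = 1$, and the cokernel is $\ZZ/p^{r(X)}\ZZ \neq \{0\}$. Therefore ${\rm Pic}(X)$ surjects onto a non trivial group, so ${\rm Pic}(X) \neq \{0\}$. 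The parenthetical cases follow: a form of $\mathbb{G}_{a,k}$ has the neutral element as a $k$-rational point; and if $k = k_s$ then $X_{k_s} \cong \mathbb{A}^1_{k_s}$ is not the case a priori, but $X(k_s) \neq \emptyset$ by smoothness of $X$ over the separably closed field $k$ (a smooth variety over a separably closed field with a geometric point has a rational point — here $X$ is geometrically connected and smooth, in fact geometrically a rational curve, so $X(k_s) \neq \emptyset$).

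The main obstacle I anticipate is making the transfer/pushforward argument in part $(i)$ rigorous for the possibly-singular-at-infinity situation: one must either work with $X$ itself (which is regular, being a form of the affine line, hence the divisor-class group formalism and proper pushforward along the finite flat morphism $F^n_X$ apply cleanly) or pass to $C$ and handle the completion carefully. I would do the former — $X$ is a regular integral affine curve, $F^n_X : X \rightarrow X^{(p^n)}$ is finite flat of degree $p^n$ between regular curves, so the norm map on invertible sheaves / the pushforward of Weil divisors is available and the projection formula gives $(F^n_X)_* \circ (F^n_X)^* = p^n \cdot \mathrm{id}$. The remaining routine checks — that $F^n_X$ really is finite flat of degree $p^n$ (this is essentially \cite[Lem.~1.3]{Rus} and the explicit description of $F^n_R$ above), and that $\deg(P_\infty) = p^{r(X)}$ forces the cokernel computation — are all furnished by Subsection~\ref{sec_intro} and Lemma~\ref{lem_ineq}.
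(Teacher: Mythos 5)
Your proposal is correct and takes essentially the same route as the paper: part $(i)$ is the Frobenius push--pull argument (the one the paper generalizes in Proposition~\ref{main_th_gen}), and part $(ii)$ amounts to observing that $m(X)=1$ while $\deg(P_\infty)=p^{r(X)}>1$, read off from the sequence \eqref{exacte.G.pic}, which is exactly the paper's argument via the strict inclusion $\ZZ[P_\infty]\varsubsetneq{\rm Pic}(C)$. One small correction to your justification in $(i)$: the identity you actually use, $(F^n_X)^*\circ (F^n_X)_*=p^n\cdot\mathrm{id}$ on ${\rm Pic}(X)$, is \emph{not} given by the projection formula --- that formula yields the composite in the other order, acting on ${\rm Pic}\left(X^{(p^n)}\right)$, which is useless here since that group vanishes; the identity you need holds because $F^n_X$ is injective on topological spaces (radicial), so the fibre over $F^n_X(P)$ is the single point $P$ counted with total multiplicity $p^n$, which is precisely the point the paper makes by combining \cite[Pro.~7.1.38]{QL} with the injectivity of the Frobenius.
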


\begin{proof}
$(i)$ Let $n$ be $n(X)$. The $n$th relative Frobenius morphism
$F^{n}_X:X \rightarrow X^{(p^n)}$ is a finite surjective map
of degree $p^{n}$, we will denote it $f$. Let $Z$ be a cycle of codimension
$1$ on $X$, then $f_*Z$ is a cycle of codimension $1$ on $X^{(p^n)}$
\cite[Cor.~8.2.6]{QL}. A direct consequence of the definition of $f$ is 
that $f$ is injective on topological spaces, so 
$f^*f_*Z={\rm deg}(f)Z=p^{n}Z$ \cite[Pro. 7.1.38]{QL}. Moreover
$X^{(p^n)}\cong \mathbb{A}^1_{k^{p^{-n}}}$, so
$f_*Z=0$ in ${\rm Pic}\left(X^{(p^n)}\right)$. Thus $f^*f_*Z=p^{n(X)}Z=0$ in
${\rm Pic}(X)$, and the group ${\rm Pic}(X)$ is of $p^{n(X)}$-torsion.

$(ii)$ If there is a $k$-rational point on $X$ then $m(X)=1$. By hypothesis
$X$ is a non trivial form of $\mathbb{A}^1_k$, so $P_\infty$ is a non
$k$-rational purely inseparable 
point (Remark~\ref{rem_rat}), and $\ZZ[P_\infty]$ is a strict subgroup of
${\rm Pic}(C)$. So ${\rm Pic}(X)$ is non trivial.
\end{proof}

We will now use the arguments of the proof of Theorem \ref{main_th} $(i)$ to 
obtain an upper bound on the torsion of other Picard groups.

Let $Y$ be an affine geometrically integral algebraic variety of dimension
$d$. First, remark that the definition of the $n$th relative Frobenius 
morphism stated in Subsection \ref{sec_intro} extends to the setting of every
affine $k$-scheme. So in particular, $F^n_Y: Y \rightarrow Y^{(p^n)}$ is well 
defined and is a finite morphism of degree $p^{dn}$. Next, let $n(Y)$ be the 
smallest non-negative integer $n$ such that
$Y^{(p^n)} \cong \mathbb{A}_{k^{p^{-n}}}^d$ (if it exists). This notation 
coincides with that of Subsection \ref{sec_intro} if $Y$ is a form of
$\mathbb{A}^1_k$.

\begin{Lem}\label{lem_n(Y)}
The integer $n(Y)$ is well defined in the following cases:
\begin{enumerate} [(i)]
\item $Y$ is a smooth connected unipotent algebraic group.

\item $Y$ is a form of $\mathbb{A}^2_k$.
\end{enumerate}
\end{Lem}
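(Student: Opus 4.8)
The plan is to verify in each case that some relative Frobenius twist of $Y$ becomes affine space, which is exactly the content of $n(Y)$ being well defined. For case $(i)$, recall that a smooth connected unipotent algebraic group $U$ over $k$ splits after base change by a finite purely inseparable extension $K/k$ (\cite[Cor.~IV § 2 3.9]{DG}). We may take $K = k^{p^{-n}}$ for some $n$, since every finite purely inseparable extension of $k$ embeds into $k^{p^{-n}}$ for $n$ large enough. Over $k^{p^{-n}}$ the group $U_{k^{p^{-n}}}$ is $k^{p^{-n}}$-split, hence an iterated extension of copies of $\mathbb{G}_{a,k^{p^{-n}}}$; as a scheme such an iterated extension of affine spaces is isomorphic to affine space (each successive extension is a $\mathbb{G}_a$-torsor over an affine base, hence trivial as a scheme since $H^1$ of the additive group over an affine scheme vanishes). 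Thus $U^{(p^n)} \cong U_{k^{p^{-n}}} \cong \mathbb{A}^d_{k^{p^{-n}}}$ where $d = \dim U$, so $n(Y)$ is well defined.

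For case $(ii)$, let $Y$ be a form of $\mathbb{A}^2_k$, so there is a field extension $K \supset k$ with $Y_K \cong \mathbb{A}^2_K$. First I would reduce to the case where $K/k$ is finite purely inseparable: one descends the isomorphism $Y_K \cong \mathbb{A}^2_K$ to a finitely generated subextension, then separates out the separable part of that extension, over which $Y$ already becomes $\mathbb{A}^2$ by Galois descent for the affine plane over the separable closure (here one uses that forms of $\mathbb{A}^2$ over a separably closed — or even just over $k_s$ — field are trivial, which follows since a form of $\mathbb{A}^2_k$ that is trivial after a separable extension is already trivial over $k_s$, and $\mathbb{A}^2$ has no nontrivial separable twists). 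The remaining purely inseparable extension embeds into $k^{p^{-n}}$ for some $n$, so $Y_{k^{p^{-n}}} \cong \mathbb{A}^2_{k^{p^{-n}}}$, and since $Y^{(p^n)}$ is by construction isomorphic to $Y_{k^{p^{-n}}}$ (exactly as in Subsection~\ref{sec_intro}), we get $Y^{(p^n)} \cong \mathbb{A}^2_{k^{p^{-n}}}$, establishing that $n(Y)$ is well defined.

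The main obstacle is the descent argument in case $(ii)$: one must know that a form of $\mathbb{A}^2_k$ which is trivial over some extension is already trivial over a \emph{purely inseparable} extension, i.e. that the separable descent problem for $\mathbb{A}^2$ is trivial. The cleanest route is to invoke that $\mathbb{A}^2_{k_s}$ has trivial automorphism-group cohomology in the relevant degree, or more simply to cite that over a separably closed field every form of $\mathbb{A}^2$ is trivial (a known fact in this setting — e.g.\ via the structure theory already used for forms of $\mathbb{A}^1$, or by Galois descent since $\mathrm{Aut}(\mathbb{A}^2_{k_s})$ acts trivially on the relevant torsor classes). Once that is in hand, case $(ii)$ is immediate; case $(i)$ is essentially a direct citation of \cite[Cor.~IV § 2 3.9]{DG} together with the scheme-theoretic triviality of iterated $\mathbb{G}_a$-extensions over an affine base.
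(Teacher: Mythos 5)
Your proof is essentially the paper's, which consists of exactly two citations: for (i) the splitting of a smooth connected unipotent group over a finite purely inseparable extension \cite[Cor.~IV \S 2 3.9]{DG} together with the fact that a split unipotent group is isomorphic to affine space as a scheme \cite[Th.~IV \S 4 4.1]{DG} (your torsor-triviality argument via the vanishing of $H^1(\mathord{-},\mathbb{G}_a)$ on affine bases is a correct proof of the latter), and for (ii) Kambayashi's theorem \cite[Th.~3]{A2PI} on the absence of nontrivial separable forms of the affine plane. The one caution: in (ii) your parenthetical attempts to justify that $\mathbb{A}^2_{k_s}$ has no nontrivial separable twists are circular --- that statement \emph{is} Kambayashi's theorem, a genuinely nontrivial result resting on the Jung--van der Kulk description of $\mathrm{Aut}(\mathbb{A}^2)$ --- so it must simply be cited, exactly as you ultimately propose and as the paper does.
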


\begin{proof}
For $(i)$ see \cite[Cor.~IV § 2 3.9]{DG} and
\cite[Th.~IV §4 4.1]{DG}. For $(ii)$ see \cite[Th. 3]{A2PI}.
\end{proof}

The following proposition is obtained by arguing as in the proof of 
Theorem~\ref{main_th}~$(i)$.

\begin{Pro}\label{main_th_gen}
\begin{enumerate}[(i)]
\item Let $U$ be a smooth connected unipotent algebraic group, let $d$ be the 
dimension of $U$. Then ${\rm Pic}(U)$ is of $p^{dn(U)}$-torsion.

\item Let $Y$ be a form of $\mathbb{A}^2_k$. Then ${\rm Pic}(Y)$ is of
$p^{2n(Y)}$-torsion.

\item Let $k$ be separably closed, let $d\in \NN^*$, and let $Y$ be a form of 
$\mathbb{A}^d_{k}$. Then ${\rm Pic}(Y)$ is of $p^{dn(Y)}$-torsion. 
\end{enumerate}
\end{Pro}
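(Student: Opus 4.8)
The plan is to mimic exactly the argument used in the proof of Theorem~\ref{main_th}~$(i)$, replacing the one-dimensional relative Frobenius by the $d$-dimensional one. Let me spell out the three cases.

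\medskip

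\textbf{Strategy.} In each case $Y$ is an affine geometrically integral $k$-variety of dimension $d$ for which the invariant $n = n(Y)$ is well defined (by Lemma~\ref{lem_n(Y)} in cases $(i)$ and $(ii)$; by hypothesis together with the argument that a form of $\mathbb{A}^d_k$ over a separably closed field has $Y^{(p^n)}$ trivial for $n$ large, in case $(iii)$). The $n$th relative Frobenius morphism $f \coloneq F^n_Y : Y \rightarrow Y^{(p^n)}$ is a finite morphism of degree $p^{dn}$, and it is a homeomorphism on underlying topological spaces (the relative Frobenius is a universal homeomorphism). Hence for any codimension-one cycle $Z$ on $Y$ we have the projection-formula identity $f^* f_* Z = \deg(f)\, Z = p^{dn} Z$ in the divisor class group, using \cite[Pro.~7.1.38]{QL} exactly as in the proof of Theorem~\ref{main_th}. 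Now $Y^{(p^n)} \cong \mathbb{A}^d_{k^{p^{-n}}}$, whose Picard group vanishes, so $f_* Z = 0$ in $\mathrm{Pic}(Y^{(p^n)})$, and therefore $p^{dn} Z = f^* f_* Z = 0$ in $\mathrm{Pic}(Y)$. Since $Y$ is a regular (in cases $(i)$, $(iii)$) or at least integral Noetherian scheme on which Weil divisor classes compute $\mathrm{Pic}$, this shows $\mathrm{Pic}(Y)$ is $p^{dn(Y)}$-torsion.

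\medskip

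\textbf{Order of steps.} First I would record that $Y$ is affine, geometrically integral, of dimension $d$, so that the relative Frobenius $F^n_Y$ makes sense as explained in the discussion preceding Lemma~\ref{lem_n(Y)}, and has degree $p^{dn}$. Second, invoke Lemma~\ref{lem_n(Y)} (resp. \cite[Th.~3]{A2PI}, resp. the separably-closed hypothesis) to get that $n(Y)$ exists and pick $n = n(Y)$. Third, apply the pushforward of cycles \cite[Cor.~8.2.6]{QL}, then the projection formula \cite[Pro.~7.1.38]{QL} together with the topological injectivity of $f$ to obtain $f^* f_* Z = p^{dn} Z$. Fourth, use $Y^{(p^n)} \cong \mathbb{A}^d_{k^{p^{-n}}}$ and $\mathrm{Pic}(\mathbb{A}^d) = 0$ to conclude. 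The three items $(i)$, $(ii)$, $(iii)$ differ only in the justification of the existence of $n(Y)$ and in the ambient regularity needed to identify $\mathrm{Pic}$ with the divisor class group.

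\medskip

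\textbf{Main obstacle.} The only delicate point, compared with the one-dimensional case, is making sure the cycle-theoretic formalism still applies: one needs $Y$ to be a scheme for which $\mathrm{Pic}$ is computed by Weil divisor classes (regularity, or at least local factoriality), and one needs $f_*$ and $f^*$ on such classes together with the projection formula $f^* f_* Z = \deg(f) Z$ for a finite surjective morphism that is a homeomorphism — this is where topological injectivity of the relative Frobenius is used, as in Theorem~\ref{main_th}. Smooth connected unipotent groups and forms of $\mathbb{A}^d_k$ are smooth, hence regular, so this causes no trouble; one should just note it explicitly. Everything else is a verbatim transcription of the proof of Theorem~\ref{main_th}~$(i)$ with $p^n$ replaced by $p^{dn}$.
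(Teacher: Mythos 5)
Your proposal is correct and follows essentially the same route as the paper: the paper itself gives no separate argument for Proposition~\ref{main_th_gen} beyond stating that it "is obtained by arguing as in the proof of Theorem~\ref{main_th}~$(i)$," which is precisely the transcription you carry out (relative Frobenius of degree $p^{dn}$, topological injectivity, $f^*f_*Z = p^{dn}Z$, vanishing of ${\rm Pic}(\mathbb{A}^d)$), together with the same appeal to Lemma~\ref{lem_n(Y)} and to the separably closed hypothesis for the existence of $n(Y)$. Your explicit remarks on regularity and on identifying ${\rm Pic}$ with the divisor class group are sound and, if anything, slightly more careful than the paper's one-line indication.
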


\begin{Rem}
Let $d\geq 3$, and let $Y$ be a form of $\mathbb{A}^d_k$. It is not known if 
there is a purely inseparable extension $k'/k$ such that
$Y_{k'} \cong \mathbb{A}^d_{k'}$.
\end{Rem}

\subsection{Examples of forms of the affine line with trivial Picard group}
\label{pic_triv}

First, we give an explicit example of a form of $\mathbb{A}^1_{k}$ with 
trivial Picard group.

\begin{Lem}\label{lem_pic_trivial}
Let $k=\mathbb{F}_2(t,u)$ and let $X$ be the form of $\mathbb{A}^1_k$ defined 
by the equation ${y^2=u+x+tx^2}$, then $X(k)=\emptyset$ and ${\rm Pic}(X)$ is 
trivial.
\end{Lem}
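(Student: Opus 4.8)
The plan is to analyze the curve $X = \mathrm{Spec}\,k[x,y]/(y^2 - u - x - tx^2)$ over $k = \mathbb{F}_2(t,u)$ directly, together with its regular completion $C$, and to apply the exact sequence \eqref{exacte.G.pic}. First I would show $X(k) = \emptyset$: a $k$-rational point would give $a,b \in k = \mathbb{F}_2(t,u)$ with $b^2 = u + a + ta^2$; clearing denominators and looking at the $t$-adic and $u$-adic valuations (or simply noting that $u + a + ta^2$ would have to be a square in $\mathbb{F}_2(t,u)$, which one rules out by a valuation/parity argument at the place $u$ of $\mathbb{F}_2(t)(u)$) yields a contradiction. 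This is the routine part.

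The substantive part is computing $\mathrm{Pic}(X)$. By the exact sequence \eqref{exacte.G.pic}, $\mathrm{Pic}(X)$ sits in
\[
0 \to \mathrm{Pic}^0(C) \to \mathrm{Pic}(X) \to m(X)\ZZ/p^{r(X)}\ZZ \to 0.
\]
Here $p = 2$. I would first identify $C$ and the point $P_\infty$: homogenizing in $\mathbb{P}^2_k$ gives the conic $y^2 = uz^2 + xz + tx^2$, which is smooth at the point at infinity $[0:1:0]$ only if the relevant coefficient is a non-square; here one checks $C$ is in fact the smooth conic, so $C \cong \mathbb{P}^1_k$ is false in general, but $P_\infty$ has residue field $k(t^{1/2})$, so $r(X) = 1$. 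Since $p^{r(X)} = 2$ and $C$ is a smooth conic (genus $0$), $\mathrm{Pic}^0(C) = 0$. It remains to compute $m(X)$, the gcd of degrees of closed points of $C$: because $C$ is a smooth plane conic with no $k$-rational point, every closed point has even degree, so $m(X) = 2 = p^{r(X)}$. Then the exact sequence forces $\mathrm{Pic}(X) \cong m(X)\ZZ/p^{r(X)}\ZZ = 2\ZZ/2\ZZ = 0$.

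The main obstacle is pinning down $C$ precisely: one must verify that the regular completion $C$ is exactly the smooth conic $\{y^2 = uz^2 + xz + tx^2\} \subset \mathbb{P}^2_k$ (equivalently, that this plane curve is already regular at $P_\infty$), and that it has \emph{no} $k$-rational point at all — not merely that $X$ has none. For the regularity at $P_\infty$, I would compute the local ring $\mathcal{O}_{C,P_\infty}$ and check, as in the "naive completion" computations of Corollary~\ref{cor_deg_genre}, that $\mathcal{O}_{C,P_\infty}/(z) \cong k[y]/(y^2 - t) $ — wait, more carefully: dehomogenizing at $y$, the curve is $1 = uz^2 + xz + tx^2$, and $P_\infty = (x,z) = (0,0)$; the fibre of the projection to the $x$-line over $x=0$ is $\mathrm{Spec}\,k[z]/(uz^2 - 1)$, a field since $u \notin k^2$, so $P_\infty$ is regular. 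Hence $C$ is the smooth conic and $\mathrm{Pic}^0(C)=0$, $r(X)=1$. For the absence of $k$-rational points on $C$: such a point would make $C \cong \mathbb{P}^1_k$ (a conic with a rational point), whence $X$ would be $\mathbb{A}^1_k$ minus an inseparable point and in particular $X(k) \neq \emptyset$ or else the complement of that point structure forces — more directly, $C(k) \neq \emptyset$ together with smoothness at $P_\infty$ would by Remark~\ref{rem_rat}-type reasoning force $X$ trivial; but a cleaner route is: $C(k) \neq \emptyset \Rightarrow m(X) = 1 \Rightarrow \mathrm{Pic}(X) = 0$ still, and separately $X$ non-trivial is guaranteed since $t \notin k^2$. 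In fact, for the conclusion $\mathrm{Pic}(X) = 0$ all that is truly needed is $\mathrm{Pic}^0(C) = 0$ (genus zero) and $m(X) = p^{r(X)}$; the first holds as $C$ is a conic, and the second holds because $m(X) \mid p^{r(X)} = 2$ by Lemma~\ref{lem_ineq}, so either $m(X) = 2$ (done) or $m(X) = 1$, i.e. $C(k) \neq \emptyset$, in which case again $\mathrm{Pic}(X) \cong \ZZ/p^{r(X)}\ZZ$ would be nonzero — so I must exclude $C(k) \neq \emptyset$, which follows from $X(k) = \emptyset$ plus the fact that the only closed point of $C \setminus X$ is $P_\infty$ of degree $2$. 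Assembling these: $X(k)=\emptyset$ and no degree-$1$ point on $C$ at infinity give $m(X) = 2 = p^{r(X)}$, and $\mathrm{Pic}^0(C) = 0$, so $\mathrm{Pic}(X) = 0$.
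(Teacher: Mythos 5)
Your route is essentially correct and genuinely different from the paper's in how it gets the two key inputs. The paper never works with an explicit model of $C$ over $k$: it proves ${\rm Pic}^0(C)=0$ by passing to $k_s$, identifying $X_{k_s}$ as a torsor under the group $y^2=x+tx^2$ via Proposition~\ref{homo} and Example~\ref{C_P1} (so $C_{k_s}\cong\mathbb{P}^1_{k_s}$ and ${\rm Pic}^0_{C/k}=0$), and it controls the degree map by Springer's theorem \cite[Cor.~18.5]{Spr_char2}: since $X(k)=\emptyset$, the curve has no point in any odd-degree extension, so $\ZZ[P_\infty]\to{\rm Pic}(C)$ is an isomorphism and \eqref{exacte.1} finishes. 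You instead exhibit $C$ directly as the smooth plane conic $y^2=uz^2+xz+tx^2$ over $k$ itself, get ${\rm Pic}^0(C)=0$ from $p_a=0$, and replace Springer by the Riemann--Roch observation that $m(X)=1$ on a genus-zero curve would force an effective degree-one divisor, i.e.\ $C(k)\neq\emptyset$; combined with $X(k)=\emptyset$ and $\deg P_\infty=2$ this gives $m(X)=2=p^{r(X)}$, and \eqref{exacte.G.pic} kills ${\rm Pic}(X)$. This is more self-contained (no Springer, no descent from $k_s$), at the price of having to verify by hand that the plane model is already regular.

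Two local steps need repair, though neither threatens the architecture. First, your regularity check at $P_\infty$ is carried out at the wrong point: in the chart $y=1$ the curve is $1=uz^2+xz+tx^2$, so $(x,z)=(0,0)$ is not on it, and the fibre ${\rm Spec}\,k[z]/(uz^2-1)$ over $x=0$ is the degree-two point of $X$ with $x=0$, $y^2=u$, not the point at infinity. The correct computation is in the chart $x=1$, where $C$ is $Y^2=uZ^2+Z+t$ and $P_\infty=\{Z=0\}$: then $\mathcal{O}_{C,P_\infty}/(Z)\cong k[Y]/(Y^2-t)$ is a field since $t\notin k^2$, so $Z$ is a uniformizer; alternatively, the Jacobian criterion (the partials are $z$ and $x$, which vanish simultaneously only at $[0:1:0]\notin C$) shows at once that the conic is smooth, hence is the regular completion, with $\kappa(P_\infty)=k(t^{1/2})$ of degree $2$. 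Second, the parenthetical proof of $X(k)=\emptyset$ by a parity-of-valuation argument at the single place $u$ provably cannot work: the equation is solvable over $\mathbb{F}_2(t)((u))$ (take $y=0$ and $x=\sum_{i\geq 0}t^{2^i-1}u^{2^i}$, which satisfies $x+tx^2=u$), so no obstruction is visible locally at $u$. One genuinely needs a global two-variable argument, e.g.\ the paper's: clear denominators to $P^2=uQ^2+QR+tR^2$ with $P,Q,R\in\mathbb{F}_2[t,u]$, note that unless $\deg Q=\deg R$ the right-hand side has odd total degree, and in the remaining case the top-degree parts of $uQ^2$ and $tR^2$ cannot cancel because their monomials have opposite parities of degree in $u$.
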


\begin{proof}
In order to show that $X(k)=\emptyset$, it is enough to show that the 
only solution of $P^2=uQ^2+QR+tR^2$, where ${P, Q, R \in \mathbb{F}_2[t,u]}$ 
is trivial.
We denote ${\rm deg}$ the total degree of a polynomial.
If ${\rm deg}(Q)\neq {\rm deg}(R)$, for example ${\rm deg}(Q)<{\rm deg}(R)$, 
then
\[
{\rm deg}(P^2)={\rm deg}(uQ^2+QR+tR^2)=1+2{\rm deg}(R).
\]
So ${\rm deg}(P^2)$ is odd, contradiction. So ${\rm deg}(Q)={\rm deg}(R)$ and 
the monomials of highest degree of $uQ^2$ and $tR^2$ must cancel (if they 
don't cancel we have the same contradiction). But this is impossible because 
the monomial of highest degree of $uQ^2$ has an odd partial degree in $u$ 
whereas the monomial of highest
degree of $tR^2$ has an even partial degree in $u$. 

After a field extension to $k_s$, by Proposition \ref{homo} $X_{k_s}$ is 
isomorphic, as a scheme, to the non trivial form of $\mathbb{G}_{a,k_s}$ of 
equation $y^2=x+tx^2$. We have 
seen in Example~\ref{C_P1} that the regular completion of this form of
$\mathbb{G}_{a,k_s}$ 
is $\mathbb{P}^1_{k_s}$, so by uniqueness of the regular completion
$C_{k_s}\cong \mathbb{P}^1_{k_s}$. Then ${\rm Pic}^0_{C_{k_s}/k_s}$ is 
trivial, and ${\rm Pic}^0_{C/k}$ too. By \cite[Th.~9.3.1]{NM},
${\rm Pic}^0(C)$ is a subgroup of ${\rm Pic}^0_{C/k}(k)$, hence trivial.

Moreover $X$ has no $k$-rational point so according to a theorem by T. A. 
Springer \cite[Cor.~18.5]{Spr_char2} $X$ has no rational point on any 
extension of odd degree. Thus
${\rm deg}: \ZZ[P_\infty] \overset{\sim}{\rightarrow} {\rm Pic}(C)$, and by 
exactness of the sequence \eqref{exacte.1}, ${\rm Pic}(X)$ is trivial.
\end{proof}

\begin{Rem}
The regular completion of the form consider in Lemma \ref{lem_pic_trivial} is
a non trivial form of $\mathbb{P}^1_k$.
\end{Rem}

We will now construct a family of forms of the affine line with trivial Picard group.
Let $G$ be a non trivial form of $\mathbb{G}_{a,k}$, by Theorem \ref{rus.2.1} there is an 
exact sequence:
\[
0 \rightarrow G \rightarrow \mathbb{G}_{a,k}^2 \rightarrow \mathbb{G}_{a,k} \rightarrow 
0.
\]
Let $\eta$ be the generic point of $\mathbb{G}_{a,k}$, then $\eta={\rm Spec}(k(t))$ and
comes with a map $\eta \rightarrow \mathbb{G}_{a,k}$.
We denote $X$ the fibre product
$\mathbb{G}_{a,k}^2\times_{\mathbb{G}_{a,k}} \eta$.

\begin{Pro}\label{pro_triv}
With the above notation $X$ is a non trivial form 
of $\mathbb{A}^1_{k(t)}$ and
${\rm Pic}\left(X \right)$ is trivial.
\end{Pro}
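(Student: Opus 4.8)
The plan is to exploit the fact that $X$ is, by construction, the base change of $\mathbb{G}_{a,k}^2$ along the open immersion $\eta=\operatorname{Spec}(k(t))\hookrightarrow\mathbb{G}_{a,k}$ composed with the projection to the last copy of $\mathbb{G}_{a,k}$ in the exact sequence $0\to G\to\mathbb{G}_{a,k}^2\to\mathbb{G}_{a,k}\to 0$. First I would identify $X$ concretely: writing $\mathbb{G}_{a,k}^2=\operatorname{Spec}(k[x,y])$ and the map to $\mathbb{G}_{a,k}=\operatorname{Spec}(k[t])$ as $t\mapsto y^{p^n}-(x+a_1x^p+\dots+a_mx^{p^m})$ (Theorem~\ref{rus.2.1}), the fibre product over $\eta$ is $\operatorname{Spec}\bigl(k(t)[x,y]/(y^{p^n}-f(x)-t)\bigr)$ where $f(x)=x+a_1x^p+\dots+a_mx^{p^m}$. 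By Proposition~\ref{homo} this is exactly the presentation of a form of $\mathbb{A}^1_{k(t)}$ that is a principal homogeneous space under $G_{k(t)}$, with the parameter $b$ of loc.\ cit.\ specialised to the transcendental element $t$. That $X$ is a \emph{non trivial} form: if it were trivial it would have a $k(t)$-rational point, i.e.\ there would exist $P(t),Q(t)\in k(t)$ with $Q^{p^n}=f(P)+t$; comparing this with the fact that $G$ is a nontrivial form of $\mathbb{G}_{a,k}$ (so $f$ is not, up to the obvious change of variables, a $p^n$-th power separable polynomial) should force a contradiction—more cleanly, a $k(t)$-point of $X$ would yield a $k(t)$-isomorphism $X\cong\mathbb{G}_{a,k(t)}$, whence by descent along $k(t)/k$ the form $G$ would already be trivial over $k$, contradiction.

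For the triviality of $\operatorname{Pic}(X)$ the key step is to relate $\operatorname{Pic}(X)$ to $\operatorname{Pic}(\mathbb{G}_{a,k}^2)$. Since $X=\mathbb{G}_{a,k}^2\times_{\mathbb{G}_{a,k}}\eta$ and $\eta\hookrightarrow\mathbb{G}_{a,k}$ is the generic point, $X$ is the localisation of $\mathbb{G}_{a,k}^2$ at the multiplicative system of nonzero elements of $k[t]$ pulled back via the projection; equivalently $X$ is an open (in fact pro-open, a filtered intersection of opens) subscheme of $\mathbb{G}_{a,k}^2$, namely the complement of all the fibres over closed points of $\mathbb{G}_{a,k}$. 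Now $\mathbb{G}_{a,k}^2=\mathbb{A}^2_k$ has trivial Picard group, and $\mathbb{A}^2_k$ is moreover factorial; removing the (irreducible) divisors that are fibres of the projection only introduces relations into the divisor class group, so the excision sequence for $\operatorname{Pic}$ shows $\operatorname{Pic}(X)$ is a quotient of $\operatorname{Pic}(\mathbb{A}^2_k)=0$. Concretely: any line bundle on $X$ extends to a coherent reflexive sheaf, hence to a divisor class, on $\mathbb{A}^2_k$; since $\mathbb{A}^2_k$ is factorial that class is trivial, so its restriction to $X$ is trivial. Taking the filtered limit over the finitely-many-fibres-removed opens is harmless since $\operatorname{Pic}$ commutes with filtered limits of schemes with affine transition maps.

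I expect the main obstacle to be making the limit/excision argument rigorous, i.e.\ carefully justifying that $X$, which is a priori only the fibre over a \emph{point} and not literally an open subscheme, still inherits triviality of $\operatorname{Pic}$ from $\mathbb{A}^2_k$. The clean way around this is: $X=\operatorname{Spec}(R)$ with $R=k(t)\otimes_{k[t]}k[x,y]$ a localisation of the UFD $k[x,y]$, hence $R$ is itself a localisation of a UFD, therefore a UFD, therefore $\operatorname{Pic}(R)=0$. That single observation—localisations of UFDs are UFDs—bypasses all the sheaf-theoretic bookkeeping and immediately gives the result once the presentation $X=\operatorname{Spec}\bigl((k[x,y])_S\bigr)$ with $S=k[t]\smallsetminus\{0\}$ is established. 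So the proof reduces to: (a) write down the presentation and check it exhibits $R$ as a localisation of $k[x,y]$; (b) conclude $R$ is factorial, so $\operatorname{Pic}(X)=\{0\}$; (c) separately argue nontriviality of the form by a descent argument as above.
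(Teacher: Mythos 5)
Your argument for the triviality of ${\rm Pic}(X)$ is essentially the paper's own: the paper likewise identifies $X\rightarrow\mathbb{G}_{a,k}^2$ with the localisation $k[x,y]\rightarrow k[x,y]\otimes_{k[T]}k(T)$, where $T=y^{p^n}-f(x)$, and concludes by the surjectivity of ${\rm Pic}$ under localisation; your ``a localisation of a UFD is a UFD, hence has trivial Picard group'' is an equivalent and equally clean way to finish, and the excision/filtered-limit detour in your middle paragraph is unnecessary once you have that presentation.

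The gap is in the non-triviality of the form. The sentence ``a $k(t)$-point of $X$ would yield a $k(t)$-isomorphism $X\cong\mathbb{G}_{a,k(t)}$, whence by descent along $k(t)/k$ the form $G$ would already be trivial over $k$'' asserts exactly the non-obvious point without proving it. First, a rational point only trivialises the torsor, giving $X\cong G_{k(t)}$ and not $X\cong\mathbb{G}_{a,k(t)}$; to get the latter you must combine it with the assumed isomorphism $X\cong\mathbb{A}^1_{k(t)}$ and with the fact that a form of $\mathbb{G}_{a}$ whose underlying scheme is $\mathbb{A}^1$ is the trivial group form. Second, and more seriously, ``descent along $k(t)/k$'' is not a descent situation: $k(t)/k$ is an infinite transcendental extension, and you give no argument that $G_{k(t)}\cong\mathbb{G}_{a,k(t)}$ forces $G\cong\mathbb{G}_{a,k}$. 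The implication is true, but it requires an input; the paper's input is that a non-trivial form of $\mathbb{G}_{a,k}$ is $k$-wound and that woundness is insensitive to separable extensions \cite[B.3.2]{CGP}, applied after base change to the separable closure of $k(t)$. (Alternatives: Russell's explicit minimal splitting field $k\left(a_1^{p^{-n}},\dots,a_m^{p^{-n}}\right)$, which becomes $k'(t)$ over $k(t)$; or spreading an isomorphism out over some $k[t]_g$ and specialising at a $k$-point, $k$ being infinite.) Your other suggested route, the non-existence of solutions of $Q^{p^n}=f(P)+t$ in $k(t)$, is likewise left at ``should force a contradiction''. As written, the non-triviality half of the proposition is not established.
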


\begin{proof}
The morphism $\mathbb{G}_{a,k}^2 \rightarrow \mathbb{G}_{a,k}$ is a $G$-torsor. So
$X \rightarrow \eta$ is a $G_{k(t)}$-torsor, and in particular a form of
$\mathbb{A}^1_{k(t)}$.
Let $K$ be a separable closure of $k(t)$, then $G_{K}$ is still $K$-wound ($K/k(t)$ and 
$k(t)/k$ are separable extensions, and being wound is not changed by separable extension 
\cite[B.3.2]{CGP}).
By definition of $k$-wound, $G_{K}$ is a non trivial form of $\mathbb{A}^1_K$.
Moreover $X_K$ is an homogeneous space under $G_{K}$, so $X_K$ is a non trivial form of
$\mathbb{A}^1_K$ and in particular $X$ is a non trivial form of
$\mathbb{A}^1_{k(t)}$.

Finally, at the algebraic level the morphism
$X \rightarrow \mathbb{G}_{a,k}^2$ is the localisation morphism 
\[
k[x,y] \rightarrow k[x,y]\otimes_{k[T]} k(T),
\]
where $T=y^{p^n}-\left(x+a_1x^p + \dots + a_mx^{p^m} \right)$ is a polynomial that defines 
$G$. Then ${\rm Pic}(\mathbb{G}_{a,k}^2) \twoheadrightarrow
{\rm Pic}(X)$ \cite[Chap. 7 §1 n°10 Pro.~17]{algcom57}, thus
${\rm Pic}\left( X\right)$ is trivial.
\end{proof}

\begin{Rem}
Let $k$ be an imperfect field, with the construction of Proposition \ref{pro_triv} we have 
an example of a non trivial form of $\mathbb{A}^1_{k(t)}$ with trivial Picard group.
\end{Rem}

Let $G$ be a smooth affine algebraic group, we recall that $G$ is said to be
{\it special} if for any field extension $K$ of $k$, any $G_K$-torsor
$X\rightarrow {\rm Spec}(K)$ is trivial.

J.-P. Serre initiated the study of special groups over an 
algebraically closed field in \cite{serre_EFA} and
A. Grothendieck classified these groups  \cite{Gro_thsr}. More recently J.-L.
Colliot-Thélène and J.-J. Sansuc characterised special tori over an  arbitrary 
field \cite[Pro.~7.4]{tore_spe}, and M. Huruguen characterised the special 
reductive groups over an arbitrary field \cite[Th.~4.1]{huru}.
It is known that $\mathbb{G}_{a,k}$ is special, by the arguments of
\cite[4.4.a]{serre_EFA}, and more 
generally that every smooth connected $k$-split unipotent algebraic group is 
special. D. T. Nguyễn showed, under a mild assumption on the base field, that 
a smooth unipotent algebraic group is special if and only if it is $k$-split
\cite[Cor.~6.10]{ess_dim_unip}. In an unpublished note he generalised the 
result to an arbitrary base field \cite{note_ngu}.
We are going to show this result, in the particular case of 
the forms of $\mathbb{G}_{a,k}$, by using a different method; we see it as a 
corollary of our main Theorem \ref{main_th} and Proposition \ref{pro_triv}:

\begin{Cor}\label{cor_spe}
Let $G$ be a non trivial form of $\mathbb{G}_{a,k}$, then the
$G_{k(t)}$-torsor ${X\rightarrow {\rm Spec}(k(t))}$ of Proposition 
\ref{pro_triv} is non trivial, thus $G$ is not special.
\end{Cor}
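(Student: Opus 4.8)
The plan is to combine the two ingredients the corollary explicitly invokes. Proposition~\ref{pro_triv} already establishes that the $G_{k(t)}$-torsor $X \to \operatorname{Spec}(k(t))$ is a \emph{non trivial} form of $\mathbb{A}^1_{k(t)}$ with trivial Picard group, so the only thing left to argue is that this torsor cannot be the trivial torsor. First I would observe that if the $G_{k(t)}$-torsor $X \to \operatorname{Spec}(k(t))$ were trivial, then $X$ would be isomorphic, as a $k(t)$-scheme, to $G_{k(t)}$; in particular $X$ would carry a $k(t)$-rational point (the image of the identity section).

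Now I invoke Theorem~\ref{main_th}~$(ii)$ applied over the base field $k(t)$: since $X$ is a non trivial form of $\mathbb{A}^1_{k(t)}$ possessing a $k(t)$-rational point, its Picard group $\operatorname{Pic}(X)$ must be non trivial. This directly contradicts the computation $\operatorname{Pic}(X) = \{0\}$ from Proposition~\ref{pro_triv}. Hence the torsor is non trivial. Since by definition a smooth affine algebraic group is special only if every torsor under every base change is trivial, and we have exhibited a non trivial $G_{k(t)}$-torsor over the field extension $k(t)$ of $k$, the group $G$ is not special.

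The argument is essentially a one-line deduction once the two cited results are in hand, so there is no real obstacle; the subtlety — which is entirely absorbed into the statements being quoted — is that the non triviality of $X$ as a \emph{form} (as opposed to as a torsor) already follows from $G$ being $k$-wound and $k$-woundedness being preserved under separable extension, so the hypotheses of Theorem~\ref{main_th}~$(ii)$ are genuinely met over $k(t)$. The only point worth spelling out carefully is the implication ``trivial torsor $\Rightarrow$ rational point'', which is immediate from the definition of a trivial torsor as one admitting a section.

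\begin{proof}
By Proposition~\ref{pro_triv}, $X$ is a non trivial form of $\mathbb{A}^1_{k(t)}$ and $\operatorname{Pic}(X) = \{0\}$. Suppose, for contradiction, that the $G_{k(t)}$-torsor $X \to \operatorname{Spec}(k(t))$ is trivial. Then it admits a section, so $X(k(t)) \neq \emptyset$. Applying Theorem~\ref{main_th}~$(ii)$ over the base field $k(t)$ to the non trivial form $X$ of $\mathbb{A}^1_{k(t)}$, we conclude that $\operatorname{Pic}(X) \neq \{0\}$, a contradiction. Hence $X \to \operatorname{Spec}(k(t))$ is a non trivial $G_{k(t)}$-torsor. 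Since $k(t)$ is a field extension of $k$, the group $G$ is not special.
\end{proof}
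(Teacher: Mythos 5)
Your proof is correct and takes essentially the same approach as the paper: both derive a contradiction between the vanishing of ${\rm Pic}(X)$ from Proposition~\ref{pro_triv} and the non-vanishing forced by Theorem~\ref{main_th}~$(ii)$. The only cosmetic difference is that you apply Theorem~\ref{main_th}~$(ii)$ directly to $X$ over $k(t)$ via the rational point coming from a section, whereas the paper uses the isomorphism $X\cong G_{k(t)}$ to compare ${\rm Pic}(X)$ with the Picard group of the group itself; both versions are valid.
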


\begin{proof}
Assume that ${X\rightarrow {\rm Spec}(k(t))}$ is a trivial $G_{k(t)}$-torsor, 
then in particular ${{\rm Pic}(X)\cong {\rm Pic}(G)}$. But this is impossible 
since ${\rm Pic}(G)$ is not trivial (Theorem~\ref{main_th}), while
${\rm Pic}(X)$ is trivial (Proposition \ref{pro_triv}).
\end{proof}

\section{Cocartesian diagram and Picard functor}\label{sec2}
The main result of this Section is Theorem \ref{th_pic_final}, it is stated 
and proved in Subsection~\ref{sub_pic_final}. In Subsection \ref{rem_V_mu} we 
gather some auxiliary results on the unit group scheme, in Subsection 
\ref{sub_rig} we show Proposition \ref{pro_loc_alg} that is the main tool for 
the proof of Theorem \ref{th_pic_final}.

Throughout this Section $S$ is a base scheme, we consider
schemes and morphisms over $S$. And if $X$ and $T$ are two $S$-schemes we will 
note $X_T$ for the product $X\times_S T$.

We will use this level of generality, in a future work, to study the
$G$-torsors for $G$ a form of $\mathbb{G}_{a,k}$.

\subsection{Unit group scheme}\label{rem_V_mu}

Let $f:X \rightarrow S$ be a proper morphism, flat and of finite presentation. The 
functor 
\[
\begin{array}{rcl}
 Sch/S^\circ &\rightarrow &Rings \\
 T& \mapsto & \mathcal{O}(X_T)
\end{array}
\]
is represented by a $S$-scheme $V_X$ which is smooth if and only if $f$ is 
cohomologically flat in dimension~$0$ \cite[Cor.~8.1.8]{NM} (i.e. the formation of 
$f_*(\mathcal{O}_X)$ commutes with base change). Moreover the functor 
\[
\begin{array}{rcl}
 Sch/S^\circ &\rightarrow &Groups \\
 T& \mapsto & \mathcal{O}(X_T)^*
\end{array}
\]
is represented by an open sub-scheme $\mu^X$ of $V_X$, so it is an $S$-group scheme
\cite[Lem.~8.1.10]{NM} .

An important particular case is the following, let $k$ be a field of arbitrary 
characteristic, let $A$ be a $k$-algebra of finite dimension (as a $k$-vector space), 
then the group functor
\[
\begin{array}{rcl}
k-algebras &\rightarrow &Groups \\
R& \mapsto & (A\otimes_k R)^*
\end{array}
\]
is represented by an affine smooth commutative 
connected algebraic group denoted $\mu^A$ whose Lie algebra is $A$ with the trivial 
bracket \cite[II §1 2.3]{DG}.

\begin{Rem}
Let $k$ be a field of arbitrary characteristic, then
$\mu^k$ is the multiplicative group $\mathbb{G}_{m,k}$.

More generally if $A$ is a $k$-algebra of finite dimension, then
$\mu^A=R_{A/k}(\mathbb{G}_{m,A})$, where $R_{A/k}$ is the Weil restriction.
\end{Rem}

\begin{Rem}
Let $A \subset A'$ be two $k$-algebras of finite dimension. The inclusion
$f:A \hookrightarrow A'$ induces a morphism of algebraic groups
$f^*:\mu^{A} \rightarrow \mu^{A'}$ which is injective on 
$\overline{k}$-rational points and induces an injection on the Lie algebras. 
So the scheme theoretic kernel of $f^*$ is trivial, and  $f^*$ is a closed 
immersion \cite[II~§5~5.1]{DG}.

The co-kernel of $f^*$ is a smooth commutative connected affine algebraic 
group denoted $\mu^{A'/A}$.
\end{Rem}

\begin{Lem}\label{lem_dep}
Let $k$ be a field of arbitrary characteristic.

{\rm (i)} Let $A$ be a local $k$-algebra, of finite dimension. 
Let $M$ be the maximal ideal of $A$ and $K$ the residue field of $A$. We have 
an exact sequence of algebraic groups
\[
0\rightarrow 1+M \rightarrow \mu^A \rightarrow \mu^K \rightarrow 0
\]
where $1+M$ is a $k$-split smooth connected unipotent algebraic group. 

Moreover if the residue field $K$ is $k$, this sequence has a unique splitting 
and we have a canonical isomorphism ${\mu^A\cong(1+M)\times_k \mu^k}$.

{\rm (ii)} Let $A\subset A'$ be two local $k$-algebras, of finite dimension 
and having the same residue field $K$, then $\mu^{A'/A}$ is a $k$-split smooth 
connected unipotent group.
\end{Lem}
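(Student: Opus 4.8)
The plan is to prove (i) first and then deduce (ii) from it. For (i), the key observation is that the short exact sequence of $k$-algebras $0 \to M \to A \to K \to 0$ induces, for every $k$-algebra $R$, a sequence $0 \to 1+(M\otimes_k R) \to (A\otimes_k R)^* \to (K\otimes_k R)^*$, where the kernel of reduction mod $M\otimes_k R$ is exactly $1+(M\otimes_k R)$ because an element of $A\otimes_k R$ is a unit if and only if its image in $K\otimes_k R$ is a unit (here one uses that $M\otimes_k R$ is a nilpotent ideal of $A\otimes_k R$, since $M$ is nilpotent in $A$). This shows the sequence of functors $0 \to \mathcal{N} \to \mu^A \to \mu^K$ is exact where $\mathcal{N}(R) = 1+(M\otimes_k R)$, and the functor $\mathcal{N}$ is represented by the affine space underlying $M$ as a $k$-vector space (via $m \mapsto 1+m$), so it is a smooth connected affine $k$-group. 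To see that $\mu^A \to \mu^K$ is surjective as a morphism of algebraic groups (i.e. faithfully flat), I would note that it is surjective on the level of Lie algebras (the map $A \to K$ of Lie algebras with trivial bracket is onto) and that $\mu^A$, $\mu^K$ are smooth, so the image is an open hence closed subgroup of the connected group $\mu^K$, thus equals $\mu^K$; alternatively one invokes that a surjection of $k$-algebras $A \twoheadrightarrow K$ with $K$ finite étale-or-not over $k$ induces a faithfully flat Weil-restriction map, using $\mu^A = R_{A/k}(\mathbb{G}_m)$. The unipotence and $k$-splitness of $\mathcal{N} = 1+M$ follows from the filtration by the powers $M^i$: the successive quotients $(1+M^i)/(1+M^{i+1}) \cong M^i/M^{i+1}$ (via $1+m \mapsto m$, using $M^i\cdot M^i \subset M^{i+1}$) are vector groups, i.e. products of $\mathbb{G}_{a,k}$, so $1+M$ is $k$-split unipotent by definition.

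For the splitting assertion when $K=k$: in that case $\mu^K = \mathbb{G}_{m,k}$, and the quotient map $A \to k$ admits the canonical $k$-algebra section $k \hookrightarrow A$ (the structural morphism), which induces a group-scheme section $\mathbb{G}_{m,k} = \mu^k \to \mu^A$ of $\mu^A \to \mu^k$. This splits the sequence, giving $\mu^A \cong (1+M)\rtimes_k \mathbb{G}_{m,k}$; since $\mu^A$ is commutative this is a direct product. For uniqueness of the splitting I would argue that any two sections differ by a homomorphism $\mathbb{G}_{m,k} \to 1+M$, and there are no nonzero such homomorphisms because $1+M$ is unipotent while $\mathbb{G}_{m,k}$ is of multiplicative type (a homomorphism from a group of multiplicative type to a unipotent group over a field is trivial, see \cite[IV §2]{DG}); hence the section is unique and the isomorphism is canonical.

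For (ii), given $A \subset A'$ local $k$-algebras of finite dimension with common residue field $K$, I would use part (i) twice. Writing $M$, $M'$ for the maximal ideals, the reduction maps fit into a commutative diagram with exact rows $0 \to 1+M \to \mu^A \to \mu^K \to 0$ and $0 \to 1+M' \to \mu^{A'} \to \mu^K \to 0$, where the right-hand vertical map is the identity on $\mu^K$ (same residue field) and the left vertical map $1+M \to 1+M'$ is the closed immersion induced by $M \hookrightarrow M'$. The snake lemma then identifies $\mu^{A'/A} = \mathrm{coker}(\mu^A \to \mu^{A'})$ with $\mathrm{coker}(1+M \to 1+M') = (1+M')/(1+M)$. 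It remains to see this quotient is $k$-split unipotent: it is a quotient of the $k$-split unipotent group $1+M'$, and any quotient of a $k$-split smooth connected unipotent group is again $k$-split smooth connected unipotent (this is standard, e.g. \cite[IV §4]{DG} or \cite[B.3]{CGP}), so we are done.

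The main obstacle I anticipate is making the surjectivity of $\mu^A \to \mu^K$ as \emph{fppf} group schemes clean — one must be slightly careful that "represented by a smooth group with Lie algebra $A$" already encodes smoothness, so the Lie-algebra-surjectivity argument or the Weil-restriction description closes the gap without extra work; everything else (the nilpotent-ideal computation, the filtration of $1+M$, the triviality of $\mathrm{Hom}(\mathbb{G}_m, \text{unipotent})$) is routine given the results already cited in the excerpt.
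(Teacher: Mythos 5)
Your proposal is correct and follows essentially the same route as the paper: identify the kernel of $\mu^A\to\mu^K$ with $1+M$, split $1+M$ via the filtration by powers of $M$, use the canonical inclusion $k\hookrightarrow A$ for the section when $K=k$, and deduce (ii) from the two exact rows by identifying $\mu^{A'/A}$ with $(1+M')/(1+M)$, a quotient of a $k$-split unipotent group. The one point where you go beyond the paper is the uniqueness of the splitting: the paper only invokes the uniqueness of the $k$-algebra section $k\to A$, whereas your observation that any two group-scheme sections differ by a homomorphism $\mathbb{G}_{m,k}\to 1+M$, necessarily trivial, is the cleaner and more complete justification.
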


\begin{proof}
$(i)$ First we look at the composition series associated to the 
$k$-sub-algebras $k\oplus M^n$, the successive quotients are vector groups 
associated with the $k$-vector spaces $M^n/M^{n+1}$. So $1+M$ is a $k$-split 
unipotent algebraic group.

The quotient map $p:A\twoheadrightarrow A/M\cong K$ induces a morphism of 
algebraic groups $\mu^A \rightarrow \mu^K$, then
$\mu^A(\overline{k}) \twoheadrightarrow \mu^K(\overline{k})$,
so $\mu^A \rightarrow \mu^K \rightarrow 0$ is exact and the kernel of
$\mu^A \rightarrow \mu^K$ is $1+M$.

Moreover if $K=k$, then $A=k\oplus M$ and the inclusion $k\subset A$ is the 
unique morphism $k\rightarrow A$. So there is a unique section of
the morphism $\mu^A \rightarrow \mu^k$, and
${\mu^A\cong(1+M)\times_k \mu^K}$ canonically.

$(ii)$ According to $(i)$, the rows of the commutative diagram below are 
exact.
\[
\xymatrix{
0\ar[r]& 1+M \ar[r] \ar[d]& \mu^A \ar[r] \ar[d]& \mu^K \ar[r] \ar[d]& 0 \\
0\ar[r]& 1+M' \ar[r] & \mu^{A'} \ar[r] & \mu^K \ar[r]& 0.
}
\]
So there is an isomorphism $\mu^{A'/A}=\mu^{A'}/\mu^{A}\cong (1+M')/(1+M)$.
In particular $\mu^{A'/A}$ is a $k$-split unipotent group (as a quotient of a 
split unipotent group \cite[Th.~V.15.4]{bor.lag}).
\end{proof}

\begin{Rem}\label{rem_ploye}
Let $k$ be a field of positive characteristic.

Let $A \subset A'$ be two local $k$-algebras of finite dimension, with residue 
fields $K$ and $K'$, then $\mu^{A'/A}$ is not necessary $k$-split. For 
example, if $A=K=k$ and $A'=K'$ is a purely inseparable extension of finite 
degree of $k$, then according to \cite[Lem.~VI.5.1]{Oe} $\mu^{K/k}$ is
$k$-wound.
\end{Rem}

\subsection{Rigidified Picard functors}\label{sub_rig}

The main result of this Subsection is the exact sequence 
\eqref{suite_pic_rigid} which relates the Picard functor and the rigidified 
Picard functor.

Let $X\rightarrow S$ be a proper, flat morphism of finite presentation. 

\begin{Def}
Following \cite{NM} we will define the rigidified Picard functor.
First we define a sub-scheme
$Y \subset X$ which is finite, flat, and of finite presentation over S,
to be a rigidificator (also called rigidifier) of ${\rm Pic}_{X/S}$ if for all 
$S$-schemes $T$ the map $\mathcal{O}(X_T) \rightarrow \mathcal{O}(Y_T)$ 
induced by the inclusion of schemes $Y_T \rightarrow X_T$ is injective.

Let $Y$ be a rigidificator of ${\rm Pic}_{X/S}$, a rigidified line bundle on
$X$ along $Y$ is by definition a pair $(\mathcal{L},\alpha)$ where
$\mathcal{L}$ is a line bundle on $X$ and $\alpha$ is a isomorphism
$\mathcal{O}_Y \overset{\sim}{\rightarrow} \mathcal{L}_{|Y}$.

Let $(\mathcal{L},\alpha)$ and $(\mathcal{L}',\alpha')$ be two rigidified line 
bundle on $X$ along $Y$. A morphism of rigidified line bundle
$f:(\mathcal{L},\alpha) \rightarrow (\mathcal{L}',\alpha')$ is a morphism of 
line bundle $f:\mathcal{L} \rightarrow \mathcal{L}'$ such that
$f_{|Y}\circ\alpha=\alpha'$.

We can now define the rigidified Picard functor as the functor
\[
({\rm Pic}_{X/S}, Y): (Sch/S)^0 \rightarrow  (Set)
\]
which associates to the $S$-scheme $T$ the set of isomorphisms of rigidified 
line bundles on $X_T$ along $Y_T$.
\end{Def}

There is a map
\[\begin{array}{rccl}
\delta:&\mu^Y & \rightarrow & ({\rm Pic}_{X/S}, Y) \\
&a\in\mathcal{O}(Y\times_S T)^* & \mapsto & (\mathcal{O}_{X\times_S T},
{\rm mult_a})
\end{array}\]
where the map
${\rm mult_a}: \mathcal{O}_{X\times_S T} \overset{\sim}{\rightarrow}
\mathcal{O}_{X\times_S T}$ is the multiplication by
${a\in\mathcal{O}(Y\times_S T)^*}$.
There is also a map $({\rm Pic}_{X/S}, Y) \rightarrow {\rm Pic}_{X/S}$
which forgets the rigidification and whose kernel is the image of $\delta$.

According to \cite[Pro.~2.1.2]{Ray_spe_pic} and
\cite[Pro.~2.4.1]{Ray_spe_pic}, the sequence
\[
0\rightarrow \mu^X \rightarrow \mu^Y \rightarrow ({\rm Pic}_{X/S}, Y) \rightarrow 
{\rm Pic}_{X/S} \rightarrow 0
\]
is an exact sequence of sheaves for the étale topology.

Under the above hypotheses we can apply \cite[Th.~2.3.1]{Ray_spe_pic}, so the
rigidified Picard functor $({\rm Pic}_{X/S}, Y)$ is represented by an algebraic space of
finite presentation on $S$. 

In Remark \ref{rem_loc_alg} and in Proposition \ref{pro_loc_alg}  we will present particular cases where $({\rm Pic}_{X/S}, Y)$ is represented by an $S$-group scheme.

\begin{Rem}\label{rem_loc_alg}
Let $X \rightarrow S$ be cohomologically flat in dimension $0$, then 
${\rm Pic}_{X/S}$ is represented by an $S$ group scheme locally of finite type.
Moreover if $S$ is a field, then $({\rm Pic}_{X/S}, Y)$ is represented by a $S$-group 
scheme locally of finite type \cite[Lem.~4.2]{Artin}.
\end{Rem}

\begin{Pro}\label{pro_loc_alg}
Let $X\rightarrow S$ be a projective flat morphism of finite presentation, with
geometrically integral  fibres and let $Y\subset X$ be a rigidificator. Then,
\begin{enumerate}[(i)]
\item The quotient $\mu^Y/\mu^X$ is represented by an affine, flat $S$-group scheme of 
finite presentation.

\item The functor $({\rm Pic}_{X/S}, Y)$ is represented by an $S$-group scheme, locally
of finite presentation.

\item The sequence
\begin{equation}
\label{suite_pic_rigid}
0\rightarrow \mu^Y/\mu^X \rightarrow ({\rm Pic}_{X/S}, Y) \rightarrow 
{\rm Pic}_{X/S} \rightarrow 0
\end{equation}
is an exact sequence of $S$-group schemes, locally of finite presentation.
\end{enumerate}
\end{Pro}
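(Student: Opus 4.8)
The plan is to deduce the three assertions from the structure of the unit group schemes $\mu^X$ and $\mu^Y$. First I record two preliminary facts. Since $X\to S$ is projective, flat and of finite presentation with geometrically integral fibres, for every $s\in S$ the finite $\kappa(s)$-algebra $\mathrm{H}^0(X_s,\mathcal{O}_{X_s})$ is geometrically integral, hence equal to $\kappa(s)$; by cohomology and base change this makes $f$ cohomologically flat in dimension $0$ with $f_*\mathcal{O}_X=\mathcal{O}_S$ after any base change, so $V_X=\mathbb{A}^1_S$ and $\mu^X=\mathbb{G}_{m,S}$. Secondly, since $Y\to S$ is finite, flat and of finite presentation, $\mu^Y=R_{Y/S}(\mathbb{G}_{m,Y})$ is a smooth affine commutative $S$-group scheme of finite presentation \cite[A.5]{CGP}, and the rigidificator condition on $Y$ says precisely that the homomorphism $\mathbb{G}_{m,S}=\mu^X\to\mu^Y$ is injective as a map of fppf sheaves (on fibres it is $\mu^{\kappa(s)}\hookrightarrow\mu^{\mathcal{O}(Y_s)}$, a closed immersion by the Remark preceding Lemma~\ref{lem_dep}). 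In particular $\mathbb{G}_{m,S}$ acts freely on $\mu^Y$ by translations.

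For $(i)$ I would build $\mu^Y/\mu^X$ by invariant theory. Working Zariski-locally we may assume $S=\mathrm{Spec}(R)$. As $\mathbb{G}_m$ is linearly reductive over every base --- every $\mathbb{G}_m$-representation being a direct sum of its weight spaces --- the translation action of $\mathbb{G}_{m,R}$ endows $\mathcal{O}(\mu^Y)$ with a $\mathbb{Z}$-grading whose degree-zero part is the invariant ring $\mathcal{O}(\mu^Y)^{\mathbb{G}_m}$; this ring is therefore an $R$-module direct summand of the flat $R$-module $\mathcal{O}(\mu^Y)$, hence flat over $R$, and it is a finitely presented $R$-algebra by a standard finite-generation argument together with Noetherian approximation. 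Since the action is free, $\mathrm{Spec}\bigl(\mathcal{O}(\mu^Y)^{\mathbb{G}_m}\bigr)$ represents the fppf-sheaf quotient $\mu^Y/\mu^X$ over $\mathrm{Spec}(R)$, and these local models glue to an affine, flat $S$-group scheme of finite presentation; moreover $\mu^Y\to\mu^Y/\mu^X$ is a $\mathbb{G}_m$-torsor, hence smooth, so $\mu^Y/\mu^X$ is even smooth over $S$. (Alternatively one could invoke a general representability result for quotients by a flat closed subgroup, e.g.\ \cite[III \S 3]{DG}, and then check affineness fibre by fibre, where $\mu^{\mathcal{O}(Y_s)}/\mathbb{G}_{m,\kappa(s)}$ is the quotient of a commutative affine algebraic group by a closed subgroup.)

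For $(ii)$ and $(iii)$ I would insert $(i)$ into the exact sequence of étale sheaves recalled just above the statement, \[0\to\mu^X\to\mu^Y\to({\rm Pic}_{X/S},Y)\to{\rm Pic}_{X/S}\to 0.\] Dividing out $\mu^X$ turns it into a short exact sequence of fppf sheaves $0\to Q\to({\rm Pic}_{X/S},Y)\to{\rm Pic}_{X/S}\to 0$ with $Q\coloneq\mu^Y/\mu^X$ the affine, flat, finitely presented $S$-group scheme from $(i)$; thus $({\rm Pic}_{X/S},Y)\to{\rm Pic}_{X/S}$ is a $Q$-torsor, in particular an affine, faithfully flat morphism of finite presentation. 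Now ${\rm Pic}_{X/S}$ is representable by a separated $S$-scheme, locally of finite presentation (Remark~\ref{rem_loc_alg}, using the cohomological flatness and $f_*\mathcal{O}_X=\mathcal{O}_S$ established above), so $({\rm Pic}_{X/S},Y)$ --- being affine over it --- is an $S$-group scheme locally of finite presentation, which is $(ii)$. For $(iii)$ it then suffices to note that the displayed three-term sequence is exact as fppf sheaves, that all three terms are $S$-group schemes locally of finite presentation, that its last arrow is faithfully flat of finite presentation (just shown), and that its first arrow is a closed immersion: indeed $Q=\ker\bigl(({\rm Pic}_{X/S},Y)\to{\rm Pic}_{X/S}\bigr)$ is the pullback along this morphism of the unit section $S\to{\rm Pic}_{X/S}$, and that section is a closed immersion because ${\rm Pic}_{X/S}\to S$ is separated.

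I expect $(i)$ to be the only genuine obstacle. Over a field it is classical that the quotient of a commutative affine algebraic group by a closed subgroup is again affine; over an arbitrary base $S$ one must both realise the quotient as a scheme and keep affineness and flatness over $S$ under control, and this is exactly why I would route the argument through the linear reductivity of $\mathbb{G}_m$, which makes the invariant ring a module direct summand and hence automatically flat. Everything after that --- the torsor structure, the descent of affineness along $({\rm Pic}_{X/S},Y)\to{\rm Pic}_{X/S}$, and the identification of the kernel with a closed subgroup scheme --- is formal.
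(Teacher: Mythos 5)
Your proposal is correct and its overall architecture coincides with the paper's: establish that $\mu^Y/\mu^X$ is an affine, flat, finitely presented $S$-group scheme, observe that $({\rm Pic}_{X/S},Y)\rightarrow {\rm Pic}_{X/S}$ is a torsor under it, and conclude representability and exactness by descent along an affine morphism to the scheme ${\rm Pic}_{X/S}$. Where you genuinely diverge is in part $(i)$. The paper obtains the quotient by citing SGA3, Exp.~VIII, Th.~5.1 (free action of a diagonalizable group on an affine scheme), gets flatness and finite presentation from further references, and then spends the bulk of the proof constructing the multiplication, unit and inverse of $\mu^Y/\mu^X$ by hand via the universal property of the categorical quotient. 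You instead exploit the $\ZZ$-grading attached to the $\mathbb{G}_m$-action: the invariants are the degree-zero part, hence an $\mathcal{O}_S$-module direct summand of the flat algebra $\mathcal{O}(\mu^Y)$, which gives flatness essentially for free, and since your $\mathrm{Spec}$ of invariants represents the fppf quotient \emph{sheaf} of groups, the group-scheme structure is automatic by Yoneda --- no diagram of categorical quotients needed. This is a real simplification, and your observation that the quotient is even smooth (being the base of a $\mathbb{G}_m$-torsor with smooth total space) is a small improvement on the statement. Two points deserve more care than your sketch gives them: the assertion that, for a \emph{free} action, $\mathrm{Spec}\bigl(\mathcal{O}(\mu^Y)^{\mathbb{G}_m}\bigr)$ is the fppf quotient and $\mu^Y$ is a torsor over it is precisely the content of SGA3 VIII.5.1 and does not follow from the grading alone (the grading gives the invariant ring and its flatness over $S$, but the torsor property uses freeness in an essential way, e.g.\ via $B_nB_{-n}=B_0$); and the finite presentation of the invariant ring is asserted rather than proved. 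Neither is a gap in substance --- both are covered by the reference the paper itself uses --- but as written your part $(i)$ quietly re-assumes the theorem it purports to re-derive. Parts $(ii)$ and $(iii)$ are essentially identical to the paper's argument and are complete.
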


\begin{proof}
The Picard functor ${\rm Pic}_{X/S}$ is represented
by a separated $S$-scheme locally of finite presentation \cite[Th.~8.2.1]{NM}.
Moreover $\mu^X=\mathbb{G}_{m,S}$ and $\mu^Y=R_{Y/S}(\mathbb{G}_{m,Y})$, so the
$S$-group scheme $\mu^Y$ is affine \cite[Pro.~I §16.6]{DG}. According to
\cite[Th.~VIII.5.1]{SGAIII_2}, the quotient
$\mu^{Y}/\mu^{X}$ is an affine $S$-scheme ($\mu^X \rightarrow \mu^Y$ is an immersion
\cite[Pro.~8.1.9]{NM}, so $\mathbb{G}_{m,S}$ acts freely on $\mu^Y$). In addition $\mu^Y$ 
is smooth and of finite presentation over $S$ \cite[Pro.~7.6.5]{NM}, so $\mu^Y/\mu^X$ is 
according to \cite[Pro.~8.5.8]{SGAIII_2} of finite presentation on $S$ and according to 
\cite[Cor.~2.2.11 (ii)]{EGAIV_2} $\mu^Y/\mu^X\rightarrow S$ is faithfully flat.

We are going to show that $\mu^{Y}/\mu^{X}$ is an $S$-group scheme.
Let $m':\mu^Y\times \mu^Y \rightarrow \mu^Y$ be the multiplication and
$p:\mu^Y \rightarrow \mu^Y/\mu^X$ be the quotient.
Then we have a morphism ${p\circ m': \mu^Y\times \mu^Y \rightarrow \mu^Y/\mu^X}$ which is
$\mu^X\times \mu^X$-invariant. So according to \cite[Th.~VIII.5.1]{SGAIII_2},
the quotient $(\mu^Y\times \mu^Y)/(\mu^X\times \mu^X)$ exists. By the universal property
of the categorical quotient (the torsors are categorical quotients \cite[Pro.~0.1]{MFK}),
there is a unique morphism $m$ such that the diagram
\[\xymatrix{
\mu^Y\times \mu^Y  \ar[r]^-{p\times p} \ar[rd]_{p \circ m'}&
(\mu^Y\times \mu^Y)/(\mu^X\times \mu^X) \ar[d]^{m}\\
& \mu^Y/\mu^X
}\]
is commutative. Moreover 
$(\mu^Y\times \mu^Y)/(\mu^X\times \mu^X)=\mu^Y/\mu^X\times \mu^Y/\mu^X$, so we 
have shown that there is a morphism
${m: \mu^Y/\mu^X\times \mu^Y/\mu^X \rightarrow \mu^Y/\mu^X}$.
Likewise, there are two morphisms ${e:S \rightarrow \mu^Y/\mu^X}$ and
$i:\mu^Y/\mu^X\rightarrow \mu^Y/\mu^X$. We only have to remark that by the 
universal property of quotients, the diagrams
\[\xymatrix{
\mu^Y/\mu^X \times \mu^Y/\mu^X \times \mu^Y/\mu^X \ar[r]^-{m\times id}
\ar[d]_{id\times m}
& \mu^Y/\mu^X \times \mu^Y/\mu^X \ar[d]^{m} \\
\mu^Y/\mu^X \times \mu^Y/\mu^X  \ar[r]^-{m} & \mu^Y/\mu^X,
}\]
\[\xymatrix{
\mu^Y/\mu^X  \ar[r]^-{e\times id} \ar[rd]_{id} & \mu^Y/\mu^X \times \mu^Y/\mu^X
 \ar[d]^{m} & \mu^Y/\mu^X  \ar[l]_-{id \times e} \ar[ld]^{id}\\
& \mu^Y/\mu^X
}\]
and
\[\xymatrix{
\mu^Y/\mu^X  \ar[r]^-{id\times i} \ar[rd]_{e \circ f} & \mu^Y/\mu^X \times \mu^Y/\mu^X
 \ar[d]^{m} & \mu^Y/\mu^X  \ar[l]_-{i \times id} \ar[ld]^{e \circ f}\\
& \mu^Y/\mu^X
}\]
(where $f$ is the structural morphism of $\mu^Y/\mu^X$) are commutative.
Thus we have shown $(i)$.

Let us show $(ii)$. The morphism $({\rm Pic}_{X/S}, Y) \rightarrow {\rm Pic}_{X/S}$
is a $\mu^{Y}/\mu^{X}$-torsor \cite[Cor.~III §4 1.8]{DG}. 
The $S$-group $\mu^{Y}/\mu^{X}$ is affine, so by \cite[Pro. III §4 1.9 a)]{DG}
$({\rm Pic}_{X/S}, Y)$ is represented by a $S$-scheme.
Moreover recall that $({\rm Pic}_{X/S}, Y)$ is an 
algebraic space \cite[Th.~2.3.1]{Ray_spe_pic}. A consequence of \cite[Pro.~8.3.5]{NM}
is that the morphisms of algebraic spaces between two schemes are exactly the morphisms
of schemes. 

There is only $(iii)$ left, according to \cite[Cor.~III §4 1.7]{DG} and
\cite[Cor.~III §1 2.11]{DG} if the morphism 
$({\rm Pic}_{X/S}, Y)\times_{{\rm Pic}_{X/S}} ({\rm Pic}_{X/S}, Y) \rightarrow
({\rm Pic}_{X/S}, Y)$ is faithfully flat of finite presentation, then
$({\rm Pic}_{X/S}, Y) \rightarrow {\rm Pic}_{X/S}$ has the same property. 
According to \cite[III §1 2.4]{DG}, 
\[
({\rm Pic}_{X/S}, Y)\times_{{\rm Pic}_{X/S}} ({\rm Pic}_{X/S}, Y) \cong
({\rm Pic}_{X/S}, Y)\times_S \mu^{Y}/\mu^{X}.
\]
Moreover we have already shown that
$\mu^{Y}/\mu^{X}\rightarrow S$ is faithfully flat of finite presentation, so
\[
({\rm Pic}_{X/S}, Y) \rightarrow {\rm Pic}_{X/S}
\]
is also faithfully flat of finite 
presentation.

To conclude we remark that
\[
({\rm Pic}_{X/S}, Y)\times_{{\rm Pic}_{X/S}} ({\rm Pic}_{X/S}, Y) \cong
({\rm Pic}_{X/S}, Y)\times_S \mu^{Y}/\mu^{X}
\]
so $\mu^Y/\mu^X$ is the kernel of
$({\rm Pic}_{X/S}, Y) \rightarrow {\rm Pic}_{X/S}$, hence $(iii)$.
\end{proof}

\subsection{An exact sequence of Picard schemes}\label{sub_pic_final}

\begin{The}\label{th_pic_final}
Let
\[
\xymatrix{
Y' \ar[r]^v \ar[d]_g & X' \ar[d]^f \\
Y \ar[r]^u & X
}
\]
be a commutative square of $S$-schemes, cocartesian in the category of ringed spaces. We make 
the following hypotheses:
\begin{enumerate}[(i)]
\item The morphisms $u$ and $v$ are closed immersions, the morphisms $g$ and $f$ 
are affine.
\item The structural morphisms $X \rightarrow S$ and $X'\rightarrow S$ are projective, 
flat of finite presentation with geometrically integral fibres .
\item $Y$ is a rigidificator of ${\rm Pic}_{X/S}$, and likewise $Y'$ is a rigidificator of
${\rm Pic}_{X'/S}$.
\end{enumerate}

Then the sequence
\begin{equation} \label{suite_pic_final}
0 \rightarrow \mu^Y \rightarrow \mu^{Y'} \rightarrow {\rm Pic}_{X/S} \rightarrow 
{\rm Pic}_{X'/S} \rightarrow 0
\end{equation}
is an exact sequence of $S$-group schemes locally of finite presentation.
\end{The}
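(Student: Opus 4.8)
The plan is to compare $X$ and $X'$ through their rigidified Picard functors and to extract \eqref{suite_pic_final} by elementary homological algebra in the abelian category of fppf abelian sheaves on $S$; at the end each of the four terms is visibly represented by an $S$-group scheme locally of finite presentation (the outer two are Weil restrictions of $\mathbb{G}_m$ along finite flat morphisms, the inner two are schemes by Proposition~\ref{pro_loc_alg}). First I would record two reductions. Since $X\to S$ and $X'\to S$ are projective, flat of finite presentation with geometrically integral fibres, they are cohomologically flat in dimension $0$ and $\mathcal{O}(X_T)=\mathcal{O}(T)=\mathcal{O}(X'_T)$ for every $S$-scheme $T$; in particular $\mu^X\cong\mathbb{G}_{m,S}\cong\mu^{X'}$. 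And applying Proposition~\ref{pro_loc_alg} to $(X,Y)$ and to $(X',Y')$ produces the exact sequences
\[
0\to\mu^Y/\mu^X\to(\mathrm{Pic}_{X/S},Y)\xrightarrow{q}\mathrm{Pic}_{X/S}\to0,\qquad
0\to\mu^{Y'}/\mu^{X'}\to(\mathrm{Pic}_{X'/S},Y')\xrightarrow{q'}\mathrm{Pic}_{X'/S}\to0 .
\]

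Next I would show that the cocartesian square stays cocartesian after any base change $T\to S$. Cocartesianity in ringed spaces means $\mathcal{O}_X\xrightarrow{\ \sim\ }f_*\mathcal{O}_{X'}\times_{(fv)_*\mathcal{O}_{Y'}}u_*\mathcal{O}_Y$; since $v$ is a closed immersion, $f_*\mathcal{O}_{X'}\twoheadrightarrow(fv)_*\mathcal{O}_{Y'}$ ($f$ affine), so this is the exactness on $X$ of
\[
0\to\mathcal{O}_X\to f_*\mathcal{O}_{X'}\oplus u_*\mathcal{O}_Y\to (fv)_*\mathcal{O}_{Y'}\to0 .
\]
As $Y'\to S$ is a rigidificator, it is finite flat of finite presentation, so $(fv)_*\mathcal{O}_{Y'}$ is $S$-flat; hence this sequence stays exact after base change to $X_T$, and (as $f$ and $u$ are affine) taking global sections gives $\mathcal{O}(X_T)=\mathcal{O}(X'_T)\times_{\mathcal{O}(Y'_T)}\mathcal{O}(Y_T)$, functorially in $T$. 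Passing to unit groups, and using that $(B\times_D C)^{*}=B^{*}\times_{D^{*}}C^{*}$ for any fibre product of rings, we get a cartesian square of $S$-group schemes
\[
\xymatrix{
\mu^X\ar[r]\ar[d]&\mu^{X'}\ar[d]^{v^{*}}\\
\mu^Y\ar[r]_{g^{*}}&\mu^{Y'} .
}
\]
Under $\mu^X\cong\mathbb{G}_{m,S}\cong\mu^{X'}$ the top map is the identity and the vertical maps are the canonical closed immersions $\mathbb{G}_{m,S}\hookrightarrow\mu^Y$ and $\mathbb{G}_{m,S}\hookrightarrow\mu^{Y'}$; the square being cartesian then forces $g^{*}(\mu^X)=\mu^{X'}$ inside $\mu^{Y'}$, shows that $g^{*}\colon\mu^Y\to\mu^{Y'}$ is a monomorphism, and shows that $g^{*}$ induces a monomorphism $\mu^Y/\mu^X\hookrightarrow\mu^{Y'}/\mu^{X'}$ whose cokernel is canonically $\mathrm{coker}(g^{*}\colon\mu^Y\to\mu^{Y'})$.

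The heart of the argument is to turn the cocartesian hypothesis into a functorial isomorphism $\Phi\colon(\mathrm{Pic}_{X/S},Y)\xrightarrow{\ \sim\ }(\mathrm{Pic}_{X'/S},Y')$, $(\mathcal{L},\alpha)\mapsto(f^{*}\mathcal{L},g^{*}\alpha)$, by Milnor patching. For any $S$-scheme $T$ the scheme $X_T$ is the pushout of the closed immersion $Y'_T\hookrightarrow X'_T$ along $g$, so giving a line bundle on $X_T$ amounts to giving a line bundle on $X'_T$, a line bundle on $Y_T$, and a gluing isomorphism over $Y'_T$; once the rigidification $\mathcal{O}_{Y_T}\xrightarrow{\ \sim\ }\mathcal{L}_{|Y_T}$ is fixed, the $Y_T$-component and the gluing datum are determined, and what remains is exactly a line bundle on $X'_T$ rigidified along $Y'_T$. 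Thus $\Phi$ is an isomorphism, and by construction (checking on the map $\delta$) it carries the first of the two exact sequences above onto the second, inducing $g^{*}$ on the kernels and $f^{*}\colon\mathrm{Pic}_{X/S}\to\mathrm{Pic}_{X'/S}$ on the cokernels.

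It remains to splice. Identifying the two middle terms via $\Phi$, we have a single fppf sheaf $\mathcal{P}$ with subsheaves $\mu^Y/\mu^X\subseteq\mu^{Y'}/\mu^{X'}\subseteq\mathcal{P}$, together with $\mathrm{Pic}_{X/S}=\mathcal{P}/(\mu^Y/\mu^X)$, $\mathrm{Pic}_{X'/S}=\mathcal{P}/(\mu^{Y'}/\mu^{X'})$, and $f^{*}$ the evident projection. Hence $f^{*}$ is an epimorphism, its kernel is $(\mu^{Y'}/\mu^{X'})/(\mu^Y/\mu^X)$ — which by the second step equals $\mathrm{coker}(g^{*}\colon\mu^Y\to\mu^{Y'})$ — and this is also the image of the induced map $\mu^{Y'}\to\mathrm{Pic}_{X/S}$; together with the injectivity of $g^{*}$ this is precisely the exactness of \eqref{suite_pic_final}, and all four terms are $S$-group schemes locally of finite presentation as noted. \emph{I expect the main obstacles to be the second and third steps}: proving that the pushout commutes with arbitrary (possibly non-flat) base change under the stated weak hypotheses on $u,v,f,g$, and checking that Milnor patching for rigidified line bundles is functorial enough in $T$ to upgrade to an isomorphism of the representable functors $(\mathrm{Pic}_{X/S},Y)$ and $(\mathrm{Pic}_{X'/S},Y')$.
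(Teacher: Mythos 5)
Your proposal is correct and follows essentially the same route as the paper: Proposition~\ref{pro_loc_alg} supplies the two four-term exact sequences, the rigidified Picard functors of $X$ and $X'$ are identified, and the sequence \eqref{suite_pic_final} is then extracted by a diagram chase (your ``splicing'' via the third isomorphism theorem is just a repackaging of that chase). The one difference is that the two steps you flag as the main obstacles --- base-change stability of the cocartesian square and the Ferrand--Milnor patching isomorphism $({\rm Pic}_{X/S},Y)\cong({\rm Pic}_{X'/S},Y')$ --- are exactly the content of \cite[Lem.~2.2]{MB_pic_var}, which the paper simply quotes, so your sketch of them is sound but should ultimately rest on that reference or on Ferrand's Théorème~2.2 in \cite{Fer}.
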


\begin{proof}
According to Proposition \ref{pro_loc_alg}, the commutative diagram
\[\xymatrix{
0\ar[r]& \mu^X \ar[d] \ar[r] &\mu^Y \ar[d] \ar[r]& ({\rm Pic}_{X/S}, Y) \ar[d]^{f^*} 
\ar[r]& {\rm Pic}_{X/S} \ar[r]\ar[d]& 0\\
0\ar[r]& \mu^{X'} \ar[r] &\mu^{Y'} \ar[r]& ({\rm Pic}_{X'/S}, Y') \ar[r]&
{\rm Pic}_{X'/S} \ar[r]& 0
}\]
is a diagram of $S$-group schemes with exact lines.
By \cite[Lem.~2.2]{MB_pic_var} $f^*$ is an isomorphism, and 
$\mu^X\cong \mu^{X'} \cong \mathbb{G}_{m,S}$. So by diagram chasing the sequence
\eqref{suite_pic_final} is exact.
\end{proof}

\section[Picard functor of the regular completion]{Picard functor of the regular completion}
\label{pic_fonct}
\subsection{Torsion of the Picard functor}\label{tor_pic_funct}

Let $X$ be a form of $\mathbb{A}^1_k$, let $C$ be the regular completion of $X$. Let $K$ 
be a field such that the regular completion of $X_K$ is $\mathbb{P}^1_K$ (e.g. $K=k'$ or 
$K=k^{p^{-n'(X)}}$, $n'(X)$ being the integer defined in \ref{def_n'}). The base change
$C_K$ is not necessary normal, but the normalisation of $C_K$ is $\mathbb{P}^1_K$ because
it is the regular completion of $X_K$, and the regular completion is unique up to unique
isomorphism. Let $\pi: \mathbb{P}^1_K \rightarrow C_K$ be the normalisation. Following 
\cite{Fer} we show how $C_K$ is obtained from $\mathbb{P}^1_K$ via "pinching".

Let $\mathcal{C}$ be the conductor of $\mathcal{O}_{C_{K}}$ in
$\mathcal{O}_{\mathbb{P}^1_K}$, i.e. the sheaf of ideals of
$\pi_*\mathcal{O}_{\mathbb{P}^1_{K}}$
given by:
\[\mathcal{C}(U)=\left\lbrace
a \in \mathcal{O}_{\mathbb{P}^1_K}(\pi^{-1}(U)) \ | \ 
a.\mathcal{O}_{\mathbb{P}^1_K}(\pi^{-1}(U)) \subset 
\mathcal{O}_{C_{K}}(U)
\right\rbrace\]
for any open sub-scheme $U$ of $C_K$.

Then $\mathcal{C}$ is also a sheaf of ideals of $\mathcal{O}_{C_{K}}$.
Let $Y^K$ be the closed sub-scheme of $C_K$ associated to the sheaf of ideals
$\mathcal{C}$. Then $C_K$ is regular outside of $P_\infty$, so $\pi$ induces an 
isomorphism between $C_K\setminus P_\infty$ and $\mathbb{P}^1_K \setminus \infty$ (where 
$P_\infty$ is the unique point of $C_K\setminus X_K$, and $\infty$ is 
the unique point of $\mathbb{P}^1_K$ above $P_\infty$). So as a set, $Y^K$ is the 
point $P_\infty$ and by construction there is a closed immersion $Y^K\rightarrow C_{K}$. 
Finally let $Z^K$ be the fibre product $Y^K \times_{C_{K}} \mathbb{P}^1_{K}$.

We have obtained a commutative diagram of $K$-varieties:
\begin{equation}\label{diag_gen}
\xymatrix{
Z^K \ar[r] \ar[d]& \mathbb{P}^1_{K} \ar[d]^{\pi} \\
Y^K \ar[r] & C_{K}.
}
\end{equation}
By construction the diagram \eqref{diag_gen} is cartesian, in fact according to the 
scholium \cite[4.3]{Fer} the diagram is also cocartesian.

First we will explicit $Y^K$ and $Z^K$. The morphism $\pi$ induces a morphism of local 
rings $\pi^{\#}_{P_\infty}:\mathcal{O}_{C_K, P_{\infty}} \rightarrow 
\mathcal{O}_{\mathbb{P}^1_{K}, \infty}$ which is the normalisation.
Let $\mathfrak{C}$ be the conductor of $\mathcal{O}_{C_K, P_{\infty}}$ in
$\mathcal{O}_{\mathbb{P}^1_{K}, \infty}$, i.e.
\[
\mathfrak{C}=\left\{ x\in \mathcal{O}_{\mathbb{P}^1_{K}, \infty} \ | \ 
x.\mathcal{O}_{\mathbb{P}^1_{K}, \infty} \subset \mathcal{O}_{C_K, P_{\infty}} \right\}=\mathcal{C}_{P_\infty}.
\]
we then have explicitly
$Z^K={\rm Spec}(\mathcal{O}_{\mathbb{P}^1_{K}, \infty}/\mathfrak{C})$ and
$Y^K={\rm Spec}(\mathcal{O}_{C_K, P_{\infty}}/\mathfrak{C})$.

By construction the cocartesian diagram \eqref{diag_gen} satisfies the hypotheses of 
Theorem \ref{th_pic_final}. Thus we have an exact sequence of locally algebraic groups
over $K$:
\[
0 \rightarrow \mu^{Z^K/Y^K} \rightarrow {\rm Pic}_{C_{K}/K} 
\rightarrow {\rm Pic}_{\mathbb{P}^1_{K}/K}\rightarrow 0.
\]
The neutral component of ${\rm Pic}_{\mathbb{P}^1_{K}/K}$ is trivial and $\mu^{Z^K/Y^K}$ 
is connected. So we have an isomorphism of algebraic groups over $K$:
\[
\textrm{ Pic}^0_{C_{K}/K}\cong \mu^{Z^K/Y^K}.
\]
In particular $\textrm{ Pic}^0_{C_{K}/K}$ is smooth.

\begin{Rem}
If $K=k'$, then according to Lemma \ref{lem_dep}, the algebraic group $\mu^{Z^K/Y^K}$ is 
$k'$-split unipotent, so ${\rm Pic}^0_{C_{k'}/k'}$ is $k'$-split unipotent.
\end{Rem}

And if we look at points over $\overline{k}$ we have the following isomorphisms:
\begin{equation}\label{eq_mu}
\textrm{ Pic}^0_{C_{K}/K}(\overline{k})\cong \mu^{Z^K/Y^K}(\overline{k})
=\frac{\mu^{Z^K}(\overline{k})}{\mu^{Y^K}(\overline{k})}
=\dfrac{\left(\overline{k} \otimes_k \frac{\mathcal{O}_{\mathbb{P}^1_{K}, \infty}}
{\mathfrak{C}}\right)^*}
{\left(\overline{k} \otimes_k \frac{\mathcal{O}_{C_K, P_\infty}}{\mathfrak{C}}\right)^*}.
\end{equation}

\begin{Lem}If $K=k^{p^{-n'(X)}}$, then the algebraic group $\mu^{Z^K/Y^K}$ is of
$p^{n'(X)}$-torsion.
\end{Lem}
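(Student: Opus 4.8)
The plan is to exploit the explicit description just obtained: we have ${\rm Pic}^0_{C_K/K}\cong\mu^{Z^K/Y^K}=\mu^{\mathcal O'/\mathfrak C}/\mu^{\mathcal O/\mathfrak C}$, where for brevity I write $n=n'(X)$, $\mathcal O'=\mathcal O_{\mathbb P^1_{K},\infty}$ and $\mathcal O=\mathcal O_{C_K,P_\infty}$, so that $\mathfrak C\subseteq\mathcal O\subseteq\mathcal O'$ are subrings of $\kappa(C_K)$. I will show that the $p^{n}$-th power endomorphism of $\mu^{\mathcal O'/\mathfrak C}$ has image inside the subgroup scheme $\mu^{\mathcal O/\mathfrak C}$. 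Since $\mu^{\mathcal O/\mathfrak C}$ is the kernel of the faithfully flat quotient map $\mu^{\mathcal O'/\mathfrak C}\twoheadrightarrow\mu^{Z^K/Y^K}$, this forces multiplication by $p^{n}$ to be zero on $\mu^{Z^K/Y^K}$.

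The heart of the matter is the inclusion $(\mathcal O')^{p^{n}}\subseteq\mathcal O$ inside $\kappa(C_K)$. First, as $K/k$ is finite purely inseparable, one has $\mathcal O=\mathcal O_{C,P_\infty}\otimes_k K$ (this ring is already local, with fraction field $\kappa(C)\otimes_k K=\kappa(C_K)$), and $\mathcal O'$ is a discrete valuation ring, namely the normalisation of $\mathcal O$. Now $K=k^{p^{-n}}$ gives $K^{p^{n}}=k$, so the $p^{n}$-th power map on $\kappa(C_K)=\kappa(C)\otimes_k K$ is additive and sends this field into $\kappa(C)\otimes 1$ (indeed $\sum\ell_j\otimes\kappa_j\mapsto\sum\kappa_j^{p^{n}}\ell_j^{p^{n}}\otimes 1$); in particular $(\mathcal O')^{p^{n}}\subseteq\mathcal O'\cap(\kappa(C)\otimes 1)$. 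On the other hand $\mathcal O'\cap(\kappa(C)\otimes 1)$ is a valuation ring of $\kappa(C)$ that contains $\mathcal O_{C,P_\infty}$ and is not all of $\kappa(C)$ (a uniformiser $\pi$ of the regular local ring $\mathcal O_{C,P_\infty}$ satisfies $\pi\otimes1\in\mathfrak m_{\mathcal O'}$, hence $\pi^{-1}\otimes1\notin\mathcal O'$); since $\mathcal O_{C,P_\infty}$ is itself a discrete valuation ring this forces $\mathcal O'\cap(\kappa(C)\otimes 1)=\mathcal O_{C,P_\infty}\otimes 1$. Thus $(\mathcal O')^{p^{n}}\subseteq\mathcal O_{C,P_\infty}\otimes 1\subseteq\mathcal O$, and reducing modulo the conductor $\mathfrak C$ (stable under $p^{n}$-th powers, being an ideal) gives $(\mathcal O'/\mathfrak C)^{p^{n}}\subseteq\mathcal O/\mathfrak C$.

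Finally this is fed into the unit group schemes. Let $R$ be a $K$-algebra and $a=\sum_i o_i\otimes r_i\in\mu^{\mathcal O'/\mathfrak C}(R)=(\mathcal O'/\mathfrak C\otimes_K R)^\times$. By additivity of Frobenius, $a^{p^{n}}=\sum_i o_i^{p^{n}}\otimes r_i^{p^{n}}$, and since $o_i^{p^{n}}\in(\mathcal O'/\mathfrak C)^{p^{n}}\subseteq\mathcal O/\mathfrak C$ this element lies in the image of the injection $\iota:\mathcal O/\mathfrak C\otimes_K R\hookrightarrow\mathcal O'/\mathfrak C\otimes_K R$ ($R$ being flat over the field $K$), say $a^{p^{n}}=\iota(v)$. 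Because $\mathcal O'/\mathfrak C\otimes_K R$ is finite as a module over $\mathcal O/\mathfrak C\otimes_K R$, the element $v$, whose image is the unit $a^{p^{n}}$, is itself a unit, i.e.\ $v\in\mu^{\mathcal O/\mathfrak C}(R)$. Hence the $p^{n}$-th power endomorphism of $\mu^{\mathcal O'/\mathfrak C}$ factors functorially through the closed subgroup $\mu^{\mathcal O/\mathfrak C}$, which by the previous paragraph is exactly what was required, so $\mu^{Z^K/Y^K}$ is $p^{n'(X)}$-torsion. The only steps needing a little care are the identity $\mathcal O_{C_K,P_\infty}=\mathcal O_{C,P_\infty}\otimes_k K$ and the valuation-ring comparison in the middle paragraph; the remaining manipulations of tensor products and of the conductor are formal, so I expect no serious obstacle.
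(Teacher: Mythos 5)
Your proof is correct and follows essentially the same route as the paper's: the key point in both is that the $p^{n'(X)}$-th power map on $\kappa(C_K)=\kappa(C)\otimes_k K$ lands in $\kappa(C)$ because $K^{p^{n'(X)}}=k$, combined with a valuation-theoretic identification of $\mathcal{O}_{\mathbb{P}^1_K,\infty}\cap\kappa(C)$ with the discrete valuation ring $\mathcal{O}_{C,P_\infty}$. The only real difference is one of packaging: you check that $[p^{n'(X)}]$ factors through $\mu^{Y^K}$ on functorial points, whereas the paper reduces to $\overline{k}$-points using the smoothness of $\mu^{Z^K/Y^K}$; both are valid.
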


\begin{proof}
$\mu^{Z^K/Y^K}$ is a smooth algebraic group, so it is enough to show that the 
group of $\overline{k}$-points $\mu^{Z^K/Y^K}(\overline{k})$ is of
$p^{n'(X)}$-torsion.

Let $n$ be a non-negative integer, then
$\kappa\left( X^{(p^n)} \right)=k \otimes_k \kappa(X)$
(where $k$ is seen as a $k$-algebra via the Frobenius morphism $\phi^n_k$). By 
definition of $C$ we 
have $\kappa(X)=\kappa(C)$, take $n=n'(X)$; then 
$\kappa\left(X^{(p^n)}\right)=\kappa(\mathbb{P}^1_K)$. Thus,
${\kappa\left(\mathbb{P}^1_K\right) = k\otimes_k \kappa(C)}$.
With this identification, the image of
$\phi^n_{\kappa(\mathbb{P}^1_K)}:x \in \kappa\left(\mathbb{P}^1_K\right) \mapsto x^{p^n} 
\in \kappa\left(\mathbb{P}^1_K\right)$ is contained in $\kappa(C)$.

The discrete valuation ring $\mathcal{O}_{\mathbb{P}^1_{K},\infty}$ is defined by the 
valuation ${\rm mult}_{\infty}$ on $\kappa\left(\mathbb{P}^1_K\right)$, and
${\rm mult}_{\infty}$ is an extension of the valuation ${\rm mult}_{P_\infty}$ on
$\kappa(C)$. If $x \in \mathcal{O}_{\mathbb{P}^1_{K},\infty}$, then of course
$x^{p^n} \in \mathcal{O}_{\mathbb{P}^1_{K},\infty}$; and we have shown 
that $x^{p^n} \in  \kappa(C)$, so
$x^{p^n} \in \mathcal{O}_{C, P_\infty}\subset \mathcal{O}_{C_K, P_\infty}$.

So according to the equation \eqref{eq_mu} $\mu^{Z^K/Y^K}(\overline{k})$ is of
$p^{n'(X)}$-torsion.
\end{proof}

To conclude we have shown the following result:
\begin{Pro}\label{Pro_Pic0}
The algebraic group ${\rm Pic}^0_{C/k}$ is unipotent of
$p^{n'(X)}$-torsion, and ${\rm Pic}^0_{C_{k'}/k'}$ is $k'$-split.
\end{Pro}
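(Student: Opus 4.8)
\textbf{Proof proposal for Proposition \ref{Pro_Pic0}.}

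The plan is to assemble the statement from the results developed earlier in this subsection together with the structural facts collected in Section~\ref{sec2}. First I would recall that throughout the subsection we have fixed a form $X$ of $\mathbb{A}^1_k$, its regular completion $C$, and we have applied Theorem~\ref{th_pic_final} to the cocartesian diagram \eqref{diag_gen} for any field $K$ over which the regular completion of $X_K$ is $\mathbb{P}^1_K$; this produced the isomorphism of algebraic groups over $K$
\[
{\rm Pic}^0_{C_K/K}\cong \mu^{Z^K/Y^K}.
\]
The first step is to take $K=k'$: by Lemma~\ref{rus1.1}(ii) the field $k'$ is purely inseparable over $k$ and $X_{k'}\cong\mathbb{A}^1_{k'}$, so its regular completion is $\mathbb{P}^1_{k'}$, and the schemes $Z^{k'}$ and $Y^{k'}$ are spectra of local Artinian $k'$-algebras with the same residue field (namely the common residue field of $\mathcal{O}_{\mathbb{P}^1_{k'},\infty}$ and $\mathcal{O}_{C_{k'},P_\infty}$, which coincide since the normalisation is an isomorphism on the relevant residue field — $\mathbb{P}^1_{k'}\to C_{k'}$ is birational and a homeomorphism). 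Hence Lemma~\ref{lem_dep}(ii) applies and $\mu^{Z^{k'}/Y^{k'}}$ is a $k'$-split smooth connected unipotent group; this gives the assertion that ${\rm Pic}^0_{C_{k'}/k'}$ is $k'$-split unipotent, which is exactly the content of the Remark immediately preceding the statement.

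The second step handles the torsion. I would invoke the Lemma just proved (with $K=k^{p^{-n'(X)}}$): over that field the regular completion of $X_{k^{p^{-n'(X)}}}$ is $\mathbb{P}^1$ by definition of $n'(X)$ in Definition~\ref{def_n'}(ii), and the Lemma asserts that $\mu^{Z^K/Y^K}$ is of $p^{n'(X)}$-torsion. Via the isomorphism ${\rm Pic}^0_{C_K/K}\cong \mu^{Z^K/Y^K}$ this says ${\rm Pic}^0_{C_K/K}$ is $p^{n'(X)}$-torsion. The point now is a descent remark: ${\rm Pic}^0_{C/k}$ is a $k$-group scheme, $K/k$ is a (finite, purely inseparable) field extension, and $\big({\rm Pic}^0_{C/k}\big)_K \cong {\rm Pic}^0_{C_K/K}$ because the formation of the identity component of the Picard scheme commutes with field extension; multiplication by $p^{n'(X)}$ is a morphism of $k$-group schemes that becomes zero after the faithfully flat base change ${\rm Spec}(K)\to{\rm Spec}(k)$, hence is already zero over $k$. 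Therefore ${\rm Pic}^0_{C/k}$ is $p^{n'(X)}$-torsion.

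Finally, that ${\rm Pic}^0_{C/k}$ is unipotent is cited in the introduction from \cite[Th.~6.6.10]{KMT}; alternatively, unipotence can be read off the above since ${\rm Pic}^0_{C/k}$ becomes $k'$-split unipotent after the purely inseparable base change $k'/k$, and being unipotent is insensitive to field extension. Smoothness and connectedness were already recorded: ${\rm Pic}^0_{C_K/K}\cong\mu^{Z^K/Y^K}$ exhibits the base change as a smooth connected group (a quotient $\mu^{Z}/\mu^{Y}$ of smooth connected groups, cf.\ the discussion following \eqref{diag_gen}), and smoothness and connectedness descend along field extensions as well. Collecting these three points — smooth connected unipotent, $p^{n'(X)}$-torsion, $k'$-split after base change to $k'$ — yields the proposition. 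The only mildly delicate point is the descent of the torsion and unipotence along the purely inseparable extension $k'/k$; everything else is a direct citation of the Lemma and of Lemma~\ref{lem_dep}, so I expect the main (still routine) obstacle to be pinning down that $\big({\rm Pic}^0_{C/k}\big)_{k'}\cong{\rm Pic}^0_{C_{k'}/k'}$ cleanly, i.e.\ that passing to the identity component commutes with this base change.
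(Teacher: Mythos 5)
Your proposal is correct and takes essentially the same route as the paper: Proposition~\ref{Pro_Pic0} is stated there as a summary (``To conclude we have shown the following result'') of exactly the two ingredients you assemble, namely the Remark that $\mu^{Z^{k'}/Y^{k'}}$ is $k'$-split unipotent via Lemma~\ref{lem_dep} and the Lemma that $\mu^{Z^K/Y^K}$ is of $p^{n'(X)}$-torsion for $K=k^{p^{-n'(X)}}$, combined through the isomorphism ${\rm Pic}^0_{C_K/K}\cong\mu^{Z^K/Y^K}$. The only difference is that you make explicit the descent of torsion and unipotence along the purely inseparable extension, which the paper leaves implicit; that step is carried out correctly.
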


\subsection{Application to the Picard functor of the regular completion}
\label{sec_pic_C}

\begin{The} \label{th_pic_X}
Let $X$ be a form of $\mathbb{A}^1_k$ and $C$ be the regular completion of
$X$.

Then ${\rm Pic}^0_{C/k}$ is a smooth connected unipotent algebraic group of 
$p^{n'(X)}$-torsion which is $k$-wound and splits over $k'$ (the smallest 
field such that $X_{k'} \cong \mathbb{A}^1_{k'}$).

Moreover if $X$ is a principal homogeneous space for a form $G$ of
$\mathbb{G}_{a,k}$, then
\[
{\rm dim} \ { \rm Pic}^0_{C/k} \leq 
\frac{(p^{{\rm min}(n,m)}-1)(p^{{\rm max}(n,m)}-2)}{2}
\]
where $n=n(G)$ and $m$ is the smallest integer such that $G$ is defined by an 
equation of the form $y^{p^n}=x+a_1x^p+\dots+a_mx^{p^m}$.

In addition if $X$ is a non trivial form of $\mathbb{G}_{a,k}$ and $p\neq 2$, 
then $k'$ is the minimal field extension of $k$ such that ${\rm Pic}^0_{C/k}$ 
splits over $k'$.
\end{The}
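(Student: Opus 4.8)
The plan is to prove Theorem~\ref{th_pic_X} in four stages, assembling the pieces already established in the excerpt. First, the core facts --- that ${\rm Pic}^0_{C/k}$ is a smooth connected unipotent algebraic group of $p^{n'(X)}$-torsion and that it splits over $k'$ --- are exactly the content of Proposition~\ref{Pro_Pic0} (smoothness came from the isomorphism ${\rm Pic}^0_{C_K/K}\cong\mu^{Z^K/Y^K}$ over $K=k^{p^{-n'(X)}}$, unipotence and the torsion bound from the last Lemma, and $k'$-splitness from the Remark following that Lemma combined with Lemma~\ref{lem_dep}(ii)). So stage one is simply to invoke Proposition~\ref{Pro_Pic0}; the only thing not literally stated there is that ${\rm Pic}^0_{C/k}$ is connected and finite-type over $k$, which follows from general representability of ${\rm Pic}_{C/k}$ for a projective curve together with $C\to{\rm Spec}(k)$ being cohomologically flat in dimension $0$ (as $C$ is a regular curve), and $k$-woundedness: here I would argue that if ${\rm Pic}^0_{C/k}$ contained a central $\mathbb{G}_{a,k}$, then since base change to $k'$ makes it split while it must stay nontrivial unless $X$ is already trivial over $k$, one gets a contradiction with the minimality of $k'$ --- but cleaner is to cite \cite[Chap.~9]{NM}: the Picard functor of a regular curve which is not $\mathbb{P}^1$ after geometric base change is automatically $k$-wound, exactly as mentioned in the introduction ("a direct consequence of results obtained in \cite[Chap.8 and Chap.9]{NM}").

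Stage two is the dimension bound under the hypothesis that $X$ is a principal homogeneous space for a form $G$ of $\mathbb{G}_{a,k}$. Here I would observe that $\dim{\rm Pic}^0_{C/k} = \dim{\rm Pic}^0_{C_K/K}$ is invariant under field extension, and that over $k_s$ (or over any field where $X$ acquires a group structure) $X_{k_s}$ has the same regular completion as $G_{k_s}$ up to base change --- more precisely, by Proposition~\ref{homo}, $X$ and $G$ have regular completions differing only by the constant $b$, and adding a constant does not change the completion or its arithmetic genus. Since ${\rm Pic}^0_{C/k}$ is a smooth connected group scheme sitting inside ${\rm Pic}_{C/k}$, we have $\dim{\rm Pic}^0_{C/k}\le p_a(C)$ (the arithmetic genus bounds the dimension of the Picard scheme for a projective curve; indeed for a curve $h^1(\mathcal{O}_C)=p_a(C)$ equals $\dim{\rm Pic}^0_{C/k}$ when ${\rm Pic}$ is smooth, and in general $\dim{\rm Pic}^0_{C/k}\le h^1(\mathcal{O}_C)=p_a(C)$). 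Then Corollary~\ref{cor_deg_genre} gives $p_a(C)\le \tfrac12(p^{\min(n,m)}-1)(p^{\max(n,m)}-2)$ with $n=n(G)$ and $m$ as specified, which is precisely the asserted inequality.

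Stage three --- the additional statement that for $X$ a non-trivial form of $\mathbb{G}_{a,k}$ with $p\ne 2$, $k'$ is the \emph{minimal} field over which ${\rm Pic}^0_{C/k}$ splits --- is the substantive new point and the main obstacle. We already know ${\rm Pic}^0_{C/k}$ splits over $k'$; we must show it splits over no smaller field. Suppose $E$ with $k\subset E\subsetneq k'$ (WLOG $E$ purely inseparable of finite degree, since $E/k$ can be reduced to its purely inseparable part as splitting is unaffected by separable base change, \cite[B.3.2]{CGP}). The idea is that $X_E$ is still a non-trivial form of $\mathbb{A}^1_E$ --- indeed a non-trivial form of $\mathbb{G}_{a,E}$ --- because $E\ne k'$ and $k'$ is minimal for trivializing $X$; hence by Lemma~\ref{X_P1=>p=2} applied over $E$, since $p\ne 2$, the regular completion $C^E$ of $X_E$ is \emph{not} $\mathbb{P}^1_E$, so $\textrm{Pic}^0_{C^E/E}$ is a non-trivial smooth connected unipotent $E$-group which, being the Picard functor of a regular non-rational curve over $E$, is $E$-wound (again by the Chap.~9 results of \cite{NM}). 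An $E$-wound group is non-split, so ${\rm Pic}^0_{C/k}$ does not split over $E$. The delicate point I expect to have to nail down carefully is the relation between ${\rm Pic}^0_{C^E/E}$ and $\big({\rm Pic}^0_{C/k}\big)_E$: these need not coincide because $C_E$ may fail to be regular, so $C^E$ is the normalization of $C_E$ and there is a pinching square as in \eqref{diag_gen} over $E$; one must check that the "extra" unipotent piece $\mu^{Z^E/Y^E}$ introduced by the normalization does not accidentally make $\big({\rm Pic}^0_{C/k}\big)_E$ split even though $\textrm{Pic}^0_{C^E/E}$ is wound. In fact the right statement is the reverse: $\textrm{Pic}^0_{C^E/E}$ is a \emph{quotient} of $\big({\rm Pic}^0_{C/k}\big)_E$ (from the exact sequence $0\to\mu^{Z^E/Y^E}\to{\rm Pic}_{C_E/E}\to{\rm Pic}_{C^E/E}\to 0$), and a split group cannot surject onto a non-trivial wound group, so $\big({\rm Pic}^0_{C/k}\big)_E$ is non-split; thus ${\rm Pic}^0_{C/k}$ does not split over $E$. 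Combining with stage one gives that $k'$ is exactly the minimal splitting field, completing the proof.

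I would therefore structure the write-up as: (1) cite Proposition~\ref{Pro_Pic0} and the \cite{NM} results for smoothness, connectedness, unipotence, the torsion bound, $k'$-splitness, and $k$-woundedness; (2) for the dimension bound, reduce via Proposition~\ref{homo} to $G$, use $\dim{\rm Pic}^0_{C/k}\le p_a(C)$, and apply Corollary~\ref{cor_deg_genre}; (3) for minimality of $k'$ when $p\ne 2$, take a proper intermediate purely inseparable $E$, note $X_E$ is a non-trivial form of $\mathbb{G}_{a,E}$ by minimality of $k'$, invoke Lemma~\ref{X_P1=>p=2} to see $C^E\not\cong\mathbb{P}^1_E$, deduce $\textrm{Pic}^0_{C^E/E}$ is a non-trivial $E$-wound group, and conclude via the surjection $\big({\rm Pic}^0_{C/k}\big)_E\twoheadrightarrow\textrm{Pic}^0_{C^E/E}$ that ${\rm Pic}^0_{C/k}$ is not $E$-split. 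The genuinely new work, and the main obstacle, is stage three --- in particular verifying that the normalization/pinching over $E$ behaves correctly so that woundedness of $\textrm{Pic}^0_{C^E/E}$ genuinely obstructs splitting of ${\rm Pic}^0_{C/k}$ over $E$, and checking that the hypothesis $p\ne 2$ (needed to rule out the exceptional case of Lemma~\ref{X_P1=>p=2}) is used exactly where required.
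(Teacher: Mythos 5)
Your proposal is correct and follows essentially the same route as the paper: Proposition~\ref{Pro_Pic0} plus the results of \cite[Chap.~8--9]{NM} for smoothness, unipotence, torsion, $k'$-splitness and $k$-woundedness; reduction to $G$ over $k_s$ and Corollary~\ref{cor_deg_genre} for the dimension bound via $\dim{\rm Pic}^0_{C/k}=p_a(C)$; and, for minimality of $k'$, the normalization/pinching square over an intermediate field $K\varsubsetneq k'$ together with Lemma~\ref{X_P1=>p=2} and the fact that a split unipotent group admits no non-trivial morphism to a wound one (\cite[B.3.4]{CGP}). The point you flag as delicate --- that ${\rm Pic}^0_{C^K/K}$ is a wound \emph{quotient} of $({\rm Pic}^0_{C/k})_K$, which therefore cannot be split --- is exactly how the paper concludes.
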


\begin{proof}
The assertion on the torsion and the fact that ${\rm Pic}^0_{C/k}$ is 
unipotent and splits over $k'$ are direct consequences of Proposition 
\ref{Pro_Pic0}. According to \cite[Pro.~8.4.2]{NM} ${\rm Pic}^0_{C/k}$ is 
smooth and by \cite[Th.~8.4.1]{NM}, 
${\rm dim} \ { \rm Pic}_{C/k} =
{\rm dim}_k \ {\rm H}^1(C,\mathcal{O}_C)=p_a(C)$.
The variety $C$ is normal and geometrically integral, so according
to \cite[Pro.~9.2.4]{NM} and \cite[Pro.~B.3.2]{CGP} the unipotent algebraic 
group ${\rm Pic}^0_{C/k}$ is $k$-wound.

In the case where $X$ is a principal homogeneous space for a form $G$ of
$\mathbb{G}_{a,k}$, the assertion on the dimension of ${ \rm Pic}^0_{C/k}$ is 
a direct consequence of Corollary \ref{cor_deg_genre}, in view of the fact 
that $C_{k_s}$ is still regular \cite[Cor.~6.14.2]{EGAIV_2} and that the 
arithmetic genus is invariant by field extensions.

We will now show the last assertion. Let $K$ be a field such that
$k \subset K \varsubsetneq k'$, we will show that the unipotent group
${\rm Pic}_{C/k}^0$ does not split on $K$, or equivalently that 
${\rm Pic}_{C_K/K}^0$ is not split. First of all if $C_K$ is normal, then the 
unipotent group ${\rm Pic}_{C_K/K}^0$ is wound, so in particular it is not 
split. Else let ${g:C^{K}\rightarrow C_{K}}$ be the normalisation of $C_{K}$. 
We are going to make the same conductor base construction as in Subsection 
\ref{tor_pic_funct}. Let $\mathcal{C}$ 
be the conductor of $\mathcal{O}_{C_{K}}$ in $\mathcal{O}_{C^{K}}$ i.e. the 
sheaf defined by:
\[\mathcal{C}(U)=\left\lbrace
a \in \mathcal{O}_{C^{K}}\left(g^{-1}(U)\right) \ | \ 
a.\mathcal{O}_{C^{K}}\left(g^{-1}(U)\right) \subset \mathcal{O}_{C_{K}}(U)
\right\rbrace.\]
Then $\mathcal{C}$ is a sheaf of ideals of $\mathcal{O}_{C_{K}}$, and of
$\mathcal{O}_{C^{K}}$. Let $Y$ be the closed sub scheme of $C_K$ defined by 
the sheaf $\mathcal{C}$, let $Z$ be the fibre product
$Y \times_{C_{K}} C^{K}$. Then we have a cocartesian square of $K$-varieties:
\[
\xymatrix{
Z \ar[r] \ar[d]& C^{K} \ar[d]^g \\
Y \ar[r] & C_{K}
}\]
which satisfies the hypotheses of Theorem \ref{th_pic_final}. So we have an 
exact sequence of algebraic groups over $K$
\[
0 \rightarrow \mu^{Z/Y} \rightarrow {\rm Pic}_{C_{K}/K}^0
\rightarrow {\rm Pic}_{C^{K}/K}^0\rightarrow 0.
\]
By hypothesis $K\varsubsetneq k'$ and $p>2$, so $C^{K}$ is not isomorphic to
$\mathbb{P}^1_{K}$ (else according to Lemma \ref{X_P1=>p=2}, we would have
$X_K\cong \mathbb{G}_{a,K}$). Thus ${\rm Pic}_{C^{K}/K}^0$ is a non trivial
$K$-wound algebraic group. Every morphism from a connected smooth unipotent 
split algebraic group to a connected smooth unipotent wound algebraic group is 
trivial \cite[B.3.4]{CGP}, thus ${\rm Pic}_{C_{K}/K}^0$ is not $K$-split.
\end{proof}

\subsection{Rigidified Picard functor}

Let $X$ be a form of $\mathbb{A}^1_k$, let $C$ be the regular completion of
$X$ and let $P_\infty$ be the unique point of $X\setminus C$ (Lemma 
\ref{rus1.1}).

A geometric invariant of $X$ is the rigidified Picard functor
$({\rm Pic}_{C/k}, Y)$ where $Y \subset C$ is a rigidificator of
${\rm Pic}_{C/k}$. In fact the rigidified 
Picard functor has the remarkable property of being "invariant by cocartesian square", i.e. if
\[
\xymatrix{
Y' \ar[r]^v \ar[d]_g & X' \ar[d]^f \\
Y \ar[r]^u & C
}
\]
is a commutative diagram of rigidificators, cocartesian in the category of ringed spaces,
then according to \cite[Lem.~2.2]{MB_pic_var},
$f^*:({\rm Pic}_{C/k}, Y) \rightarrow ({\rm Pic}_{X'/k}, Y')$ is an isomorphism.

According to Proposition \ref{pro_loc_alg} the sequence
\[
0\rightarrow  \mu^{Y}/\mu^{C} \rightarrow ({\rm Pic}_{C/k}, Y)^0 \rightarrow 
{\rm Pic}_{C/k}^0 \rightarrow 0
\]
is an exact sequence of algebraic groups.

\begin{Pro}
If $Y={\rm Spec}(\kappa(P_\infty))$, then $Y$ is a rigidificator of $C$ and
$({\rm Pic}_{C/k}, Y)^0$ is a unipotent $k$-wound algebraic group which splits over $k'$. 
\end{Pro}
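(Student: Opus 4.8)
The plan is to build on the analysis of the rigidified Picard functor already in place and to reduce the statement to properties of $\mu^{Y}/\mu^{C}$ together with the known structure of ${\rm Pic}^0_{C/k}$ from Theorem \ref{th_pic_X}. First I would check that $Y={\rm Spec}(\kappa(P_\infty))$ is a rigidificator of ${\rm Pic}_{C/k}$: since $\kappa(P_\infty)$ is a finite (purely inseparable) field extension of $k$ by Lemma \ref{rus1.1}, the scheme $Y$ is finite, flat and of finite presentation over $k$; and because $C$ is integral with $P_\infty$ a closed point, the restriction map $\mathcal{O}(C_T)\rightarrow \mathcal{O}(Y_T)$ is injective for every $k$-scheme $T$ (on the generic fibre side $\mathcal{O}(C)=k$, so a global function restricting to zero at $P_\infty$ is zero). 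Hence $Y$ is a rigidificator and the exact sequence displayed just before the proposition applies:
\[
0\rightarrow  \mu^{Y}/\mu^{C} \rightarrow ({\rm Pic}_{C/k}, Y)^0 \rightarrow
{\rm Pic}_{C/k}^0 \rightarrow 0.
\]

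Next I would identify $\mu^Y/\mu^C$. Here $\mu^C=\mathbb{G}_{m,k}$ (since $X\to S$ has geometrically integral fibres, as used in the proof of Theorem \ref{th_pic_final}) and $\mu^Y=R_{\kappa(P_\infty)/k}(\mathbb{G}_{m,\kappa(P_\infty)})=\mu^{\kappa(P_\infty)}$. Since $\kappa(P_\infty)$ is a purely inseparable finite extension of $k$, Remark \ref{rem_ploye} gives that $\mu^{\kappa(P_\infty)/k}$ is a $k$-wound smooth connected unipotent algebraic group; alternatively one invokes \cite[Lem.~VI.5.1]{Oe} directly. Moreover it splits over any field over which $\kappa(P_\infty)$ becomes a split algebra, in particular over $k'$ (which contains $\kappa(P_\infty)$, as recalled in Subsection \ref{sec_intro}), since over $k'$ the extension $\kappa(P_\infty)\otimes_k k'$ is a product of copies of $k'$ and $\mu^{\kappa(P_\infty)}_{k'}$ becomes a product of copies of $\mathbb{G}_{m,k'}$, so the quotient becomes a split torus-quotient which is $k'$-split unipotent by Lemma \ref{lem_dep}.

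Then I would assemble the conclusion. By Theorem \ref{th_pic_X}, ${\rm Pic}^0_{C/k}$ is a smooth connected unipotent $k$-wound algebraic group that splits over $k'$. So in the exact sequence above, $({\rm Pic}_{C/k},Y)^0$ is an extension of a smooth connected unipotent $k$-wound group by a smooth connected unipotent $k$-wound group; hence it is itself smooth, connected and unipotent, and it splits over $k'$ because both the sub and the quotient do (an extension of $k'$-split unipotent groups is $k'$-split). For woundness I would use the characterization via morphisms from $\mathbb{A}^1_k$: any $k$-morphism $\mathbb{A}^1_k\to ({\rm Pic}_{C/k},Y)^0$ composed with the projection to ${\rm Pic}^0_{C/k}$ is constant (woundness of the quotient), so it factors through $\mu^Y/\mu^C$ which is also wound, hence is constant; equivalently, an extension of a wound group by a wound group is wound. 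The main obstacle is the careful verification that $Y={\rm Spec}(\kappa(P_\infty))$ is genuinely a rigidificator — i.e. the injectivity of $\mathcal{O}(C_T)\to\mathcal{O}(Y_T)$ for all $T$ — which requires knowing that $C\to{\rm Spec}(k)$ is cohomologically flat in dimension $0$ with $\mathcal{O}(C_T)$ behaving well under base change; this is supplied by the fact that $C$ is a regular projective curve with $H^0(C,\mathcal{O}_C)=k$ geometrically, so everything else is formal bookkeeping with the structural results already established.
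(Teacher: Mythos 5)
Your overall strategy is the same as the paper's: identify $\mu^Y/\mu^C$ with $\mu^{\kappa(P_\infty)/k}$, invoke Remark \ref{rem_ploye} (i.e.\ \cite[Lem.~VI.5.1]{Oe}) for its woundness and Theorem \ref{th_pic_X} for ${\rm Pic}^0_{C/k}$, and conclude that an extension of wound (resp.\ $k'$-split) unipotent groups is wound (resp.\ $k'$-split); where the paper cites \cite[V.3.5]{Oe} for the extension step you argue directly via morphisms from $\mathbb{A}^1_k$, and your verification that $Y$ is a rigidificator is a welcome addition that the paper leaves implicit.

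There is, however, one intermediate claim that is false and, taken literally, self-defeating. Since $\kappa(P_\infty)/k$ is \emph{purely inseparable}, the algebra $\kappa(P_\infty)\otimes_k k'$ is not a product of copies of $k'$: it is a local Artinian $k'$-algebra with nilpotent maximal ideal $M$ and residue field $k'$. (A product of fields would occur only for a separable extension, and would make $\mu^{\kappa(P_\infty)}_{k'}$ a torus, whose quotient by $\mathbb{G}_{m,k'}$ would again be a torus rather than a split unipotent group.) The correct argument --- which is exactly what Lemma \ref{lem_dep}~(i), which you cite, supplies --- is that $\mu^{\kappa(P_\infty)\otimes_k k'}\cong(1+M)\times_{k'}\mathbb{G}_{m,k'}$ with $1+M$ a $k'$-split smooth connected unipotent group, so that $\mu^{\kappa(P_\infty)/k}_{k'}\cong 1+M$ is $k'$-split unipotent. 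With that correction your proof goes through and coincides with the paper's.
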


\begin{proof}
The algebraic group $\mu^{C}$ is isomorphic to $\mathbb{G}_m$, so
$\mu^{Y}/\mu^{C}\cong \mu^{\kappa(P_\infty)/k}$ is a unipotent algebraic group 
which is $k$-wound according to Remark \ref{rem_ploye}, and which splits over
$\kappa(P_\infty)\subset k'$. The group ${\rm Pic}_{C/k}^0$ is $k$-wound 
unipotent and splits over $k'$ according to Theorem \ref{th_pic_X}.
So the algebraic group $({\rm Pic}_{C/k}, Y)^0$ is an extension of two $k$-
wound algebraic groups, so $({\rm Pic}_{C/k}, Y)^0$ is a $k$-wound unipotent 
group  \cite[V.3.5]{Oe}. Moreover $\mu^{\kappa(P_\infty)/k}$ and
${\rm Pic}_{C/k}^0$ split over $k'$, so $({\rm Pic}_{C/k}, Y)^0$ also splits 
over $k'$.
\end{proof}

{\bf Acknowledgements}: I would like to thank Bruno Laurent and Lara Thomas
for helping me to conjecture Corollary \ref{cor_deg_genre}, as well as
Jean Fasel, Philippe Gille and Burt Totaro for their suggestions. 
I would like to thank Masayoshi Miyanishi for informing me of the existence
of the important reference \cite{FALF} (after a first version of this paper 
was circulated), and for very helpful discussions.
Moreover, I would very much like to thank Michel Brion for extremely useful 
ideas and remarks during the preparation of this article.

\end{document}